\newcommand{\CC}{{\rm\bf C}}
\newcommand{\RR}{{\rm\bf R}}
\newcommand{\QQ}{{\rm\bf Q}}
\newcommand{\ZZ}{{\rm\bf Z}}
\newcommand{\Adeles}{{\rm\bf A}}
\DeclareMathOperator{\GL}{\mathrm{GL}}
\DeclareMathOperator{\SL}{\mathrm {SL}}
\DeclareMathOperator{\Oo}{\mathrm {O}}
\DeclareMathOperator{\SO}{\mathrm {SO}}
\DeclareMathOperator{\Aut}{\mathrm {Aut}}
\DeclareMathOperator{\End}{\mathrm {End}}
\DeclareMathOperator{\Hom}{\mathrm {Hom}}
\DeclareMathOperator{\kernel}{\mathrm {ker}}
\DeclareMathOperator{\image}{\mathrm {im}}
\DeclareMathOperator{\tr}{\mathrm{tr}}
\DeclareMathOperator{\ind}{\mathrm{Ind}}
\DeclareMathOperator{\pro}{\mathrm{Pro}}
\DeclareMathOperator{\res}{\mathrm{Res}}
\DeclareMathOperator{\ad}{\mathrm{ad}}
\DeclareMathOperator{\Lie}{\mathrm{Lie}}
\newcommand{\liea}{{\mathfrak {a}}}
\newcommand{\lieb}{{\mathfrak {b}}}
\newcommand{\lieg}{{\mathfrak {g}}}
\newcommand{\lieh}{{\mathfrak {h}}}
\newcommand{\liek}{{\mathfrak {k}}}
\newcommand{\liel}{{\mathfrak {l}}}
\newcommand{\liem}{{\mathfrak {m}}}
\newcommand{\lien}{{\mathfrak {n}}}
\newcommand{\liep}{{\mathfrak {p}}}
\newcommand{\lieq}{{\mathfrak {q}}}
\newcommand{\liet}{{\mathfrak {t}}}
\newcommand{\lieu}{{\mathfrak {u}}}
\newcommand{\liegl}{{\mathfrak {gl}}}
\newcommand{\liesl}{{\mathfrak {sl}}}
\newcommand{\lieso}{{\mathfrak {so}}}
\theoremstyle{plain}
\newtheorem{theorem}{Theorem}[section]
\newtheorem{corollary}[theorem]{Corollary}
\newtheorem{proposition}[theorem]{Proposition}
\newtheorem{conjecture}[theorem]{Conjecture}
\theoremstyle{remark}
\begin{document}

{\small
\title{Algebraic Characters of Harish-Chandra Modules and Arithmeticity}
\author{Fabian Januszewski}
\maketitle
}
\begin{abstract}
These are expanded notes from lectures at the Workshop {\em Representation Theory and Applications} held at Yeditepe University, Istanbul, in honor of Roger E.\ Howe. They are supplemented by the application of algebraic character theory to the construction of Galois-equivariant characters for Harish-Chandra modules.
\end{abstract}

{\small\tableofcontents}

\section*{Introduction}

In this notes we give an introduction to abstract algebraic character theory as introduced in \cite{januszewskipreprint}. The initial motivation for this theory was the study of periods of automorphic representations with applications to number theory. In general these periods are controlled by non-admissible branching problems on the level of $(\lieg,K)$-modules, where Harish-Chandra's classical global character theory is not directly applicable. One obstacle being that the analytic definition of a character does not extend to non-admissible representations, and the other being that Harish-Chandra's correspondence between closed $G$-invariant submodules and $(\lieg,K)$-submodules fails in this generality as well. So even if there would be a global character for the restriction, it is not clear that this would still be an invariant of the underlying $(\lieg,K)$-module.

To be more concrete, consider a real reductive Lie group $G$, a closed reductive subgroup $H\subseteq G$, and a unitary representation of $G$ on a Hilbert space $V$ say. Assume that $V$ is of finite length as a $G$-representation. Then Harish-Chandra has shown that it is admissible, in the sense that for a maximal compact subgroup $K\subseteq G$, and any irreducible unitary $K$-module $W$ its multiplicity $m_W(V)$ in $V$ is finite. Furthermore the subspace of $K$-finite vectors $V^{(K)}\subseteq V$ is a module for the complexified Lie algebra $\lieg$ of $G$, i.e.\ it is a $(\lieg,K)$-module. Harish-Chandra went on to prove that there is a natural bijective one-to-one correspondence between closed $G$-invariant subspaces $U\subseteq V$ and algebraic $(\lieg,K)$-submodules $U'\subseteq V^{(K)}$, given in one direction by $U\mapsto U^{(K)}$, in the other by taking closures. This way he algebrized the study of finite length (more generally admissible) unitary representations of $G$.

Now if we consider $V$ as a representation of $H$, it need no more be of finite length, it may even fail to be admissible, and the correspondence between closed $H$-invariant subspaces and algebraic $(\lieh,L)$-submodules may fail spectacularly too. One reason being that $V^{(K)}$ is a dense $(\lieh,L)$-module in $V$, and in general $V^{(K)}\subsetneq V^{(L)}$. In particular $V^{(L)}$ may be thought of as a certain {\em completion} of $V^{(K)}$, and yet these two modules behave differently: If they are not the same, then $V^{(L)}$ contains more $(\lieh,L)$-submodules than $V^{(K)}$, for the former can distinguish between $V^{(L)}$ and $V^{(K)}\subsetneq V^{(L)}$.

As the global and the infinitesimal pictures no more coincide in general, it is natural to look for a theory which genuinely lives on the infinitesimal side. The algebraic theory discussed here provides precisely this and thus overcomes some of the above limitations by seperating it from the analytic framework.

The main idea is to apply cohomological methods, which are algebraic in nature, to define a reasonable character theory. We will motivate our construction in the first section where we also review the classical theories.

We tried to keep these notes as self-contained as possible. However the proofs of many fundamental facts about $(\lieg,K)$-modules are far too involved to be treated here. For those the reader may consult the textbooks of Knapp, Vogan, Wallach, Dixmier and others, and of course papers of Harish-Chandra. The monograph \cite{book_knappvogan1995} contains most of the fundamental material in the generality we need. For a streamlined general treatment of the algebraic character theory itself we refer to \cite{januszewskipreprint}.

In order to proof something in the first 4 sections, we discuss duality theorems in detail. We hope that this allows a reader not so familar with Lie algebra cohomology to demystify the objects, as the arguments are elementary and yet the resulting statements real theorems.

A fundamental theme we left out in our discussion here is coherent continuation. We do not discuss the (good) behavior of characters under translation functors. This is treated in \cite{januszewskipreprint}.

In the last section we introduce notions of rationality for pairs, reductive pairs and also for the corresponding modules. We show that cohomology and cohomological induction carry over to the rational setting and compare in a natural way to the classical theory over $\CC$. We show in particular that every discrete series, and more generally every cohomological representation, has a model over a number field. Furthermore this gives a nice playground for a non-trivial generalization of our algebraic character theory from section 5. Since a while the author planned to write up such a theory, and the recent work of Harris \cite{harris2012} in the context of rational Beilinson-Bernstein localization and Shimura varieties gave the final motivation to include at least the beginnings here.

While we tried to make the first 6 sections as self-contained as possible, we give less details in the last section, and also require more background, particularly about linear reductive groups and their representation theory over non-algebraically closed fields of characteristic $0$, and also Tannaka duality, as it is a convenient tool for our purposes.

The author thanks the organizers of the Workshop on Representation Theory and Applications at Yeditepe University in Istanbul for their hospitality, great organization, and good working conditions. The author thanks Roger E.\ Howe for helpful discussions and his curiosity about this theory, and the participants of the workshop for their questions. Personally the author thanks Ilhan Ikeda, Mahir B.\ Can, Safak Ozden, K\"ursat Aker, Kazim B\"uy\"ukboduk, and all the others who took very good care of me during those moving times in Istanbul.

\section{What is a character?}\label{sec:whatisacharacter}

Historically the idea of characters arose in the very same moment as representation theory itself, when Frobenius introduced and investigated characters of finite groups, and thus led the foundations of representation theory.

For a finite group $G$, and a finite-dimensional complex representation
$$
\rho_V:G\to\GL(V)
$$
the corresponding character is classically defined as the map
$$
\Theta_V:G\to\CC
$$
given by
$$
g\;\mapsto\;\tr\rho_V(g).
$$
The collection of all these functions generates a $\ZZ$-submodule $C(G)$ of the space of all functions $G\to\CC$. The elements of $C(G)$ are also called {\em virtual characters}. The collection of virtual characters is an {\em abelian group}. To the addition of characters then corresponds the direct sum of representations. As a field, $\CC$ comes with a {\em multiplication}, and functions may be multiplied and we may wonder if the product of two (virtual) characters is a (virtual) character again. This turns out to be true, as the multiplication of two characters corresponds to the tensor product of representations.

The fundamental properties of characters of finite groups then are
\begin{itemize}
\item[(A)] {\bf Additivity:} $\;\;\;\;\;\;\;\;\;\;\;\;\Theta_{V\oplus W}=\Theta_V+\Theta_W$.
\item[(M)] {\bf Multiplicativity:} $\;\;\;\Theta_{V\otimes W}=\Theta_V\cdot\Theta_W$.
\item[(I)] {\bf Independence:} Characters of pairwise distinct irreducible representations are linearly independent.
\item[(D)] {\bf Density:} The collection of $\Theta_{V}$ generates the vector space ${\rm clf}(G)$ of class functions.
\end{itemize}

Properties (A), (M), (I), and (D) have the following alternative interpretation. Consider the {\em Grothendieck group} $K(G)$ of the category of representations which is constructed as follows. First consider the set $K^+(G)$ of isomorphism classes of finite-dimensional complex representations of $G$. Then the functor
$$
(V,W)\;\mapsto\;V\oplus W
$$
induces a map
$$
+:K^+(G)\times K^+(G)\;\to\;K^+(G),
$$
which turns $K^+(G)$ into an abelian monoid, the neutral element being the isomorphism class of the tautological representation
$$
{\bf 0}:G\to\GL(0),\;\;\;g\mapsto {\bf 1}_0
$$
on the $0$-space. We turn $K^+(G)$ into an abelian group $K(G)$ by formally adjoining inverses. The result $K(G)$ is called the Grothendieck group of (finite-dimensional) representations of $G$.

It comes with a multiplication, which is induced by the functor
$$
(V,W)\;\mapsto\;V\otimes W
$$
which induces a map
$$
\cdot:\;\;\;K^+(G)\times K^+(G)\;\to\;K^+(G),
$$
and the latter multiplication law extends to all of $K(G)$. This turns $K(G)$ into a {\em commutative ring with unit}, the unit being the class of the trivial representation
$$
{\bf 1}:\;\;\;G\to \GL(\CC),\;\;\;g\mapsto 1.
$$
Now the map $V\mapsto \Theta_V$ is constant on isomorphism classes, hence induces a map
$$
\Theta:\;\;\;K(G)\to{\rm clf}(G)
$$
of the Grothendieck group to the space of class functions on $G$. Then the properties (A) and (M) are equivalent to saying that $\Theta$ is a ring homomorphism (which carries $1$ to $1$), and property (I) is equivalent to $\Theta$ being a monomorphism. Finally (D) amounts to saying that the $\CC$-span of the image of $\Theta$ is the entire space of class functions. In other words (A), (B), (I) and (D) are equivalent to the fact that the induced map
$$
\Theta:\;\;\;K(G)\otimes_\ZZ\CC\to{\rm clf}(G)
$$
is an isomorphism of $\CC$-algebras.

This notion of character depends on the notion of function on the group. There is another notion of character, which is motivated by the 1913 discovery of Elie Cartan, that the isomorphism classes of irreducible finite-dimensional representations of a complex semi-simple Lie algebra $\lieg$ are in one-to-one correspondence with {\em dominant weights}. To be more precise, fix a Borel subalgebra $\lieq\subseteq\lieg$ with Levi decomposition $\lieq=\liel+\lieu$, where $\liel$ is a Cartan subalgebra and $\lieu$ is the nilpotent radical. Then the correspondence
$$
V\;\;\;\mapsto\;\;\;V^{\lieu}:=\{v\in V\;\mid\;\forall u\in\lieu:\;u\cdot v=0\},
$$
induces a map
$$
H^0(\lieu;-):\;\;\;K(\lieg)\;\to\;K(\liel)
$$
on the level of Grothendieck groups of finite-dimensional $\lieg$- resp.\ $\liel$-modules. It is easily seen to be additive, and Elie Cartan showed that it is a monomorphism, with image the dominant weights with respect to $\lieu$. However it is not multiplicative.

A consequence of Cartan's observation is that the forgetful map
\begin{equation}
\mathcal F:\;\;\;K(\lieg)\;\to\;K(\liel),\;\;\;V\;\mapsto\;V|_\liel,
\label{eq:cartanf}
\end{equation}
is a monomorphism. By definition $\mathcal F$ is also multiplicative, in particular $\mathcal F$ is a ring homomorphism. Therefore we may consider $\mathcal F$ as a character, satisfying all the fundamental properties (A), (M) and (I), except (D). This notion of character does no more depend on functions of any sort, but it depends on the non-vanishing of Grothendieck groups.

Mere 12 years later, Hermann Weyl generalized Frobenius' work to the case of compact groups $G$, and he showed that the trace still satisfies all the properties (A), (M), (I), and property (D) reads:
\begin{itemize}
\item[] {} The span of the collection of the $\Theta_{V}$ is {\em dense} in the space ${\rm clf}^2(G)$ of $L^2$-class functions.
\end{itemize}
The latter statement is known as the (weak) Peter-Weyl Theorem. Weyl went on to prove that for a compact connected Lie group $G$ and an irreducible unitary representation $V$ of $G$ of highest weight $\lambda$, the restriction of the character to the corresponding maximal torus $T$ is explicitly given by the {\em Weyl character formula}
\begin{equation}
\Theta_V|_T\;=\;
\frac{
\sum_{w\in W(G,T)}
(-1)^{\ell(w)}
e^{w(\lambda+\rho(\lieu))-\rho(\lieu)}
}
{
\sum_{w\in W(G,T)}
(-1)^{\ell(w)}
e^{w(\rho(\lieu))-\rho(\lieu)}},
\label{eq:weylcharacterformula}
\end{equation}
where $\rho(\lieu)$ denotes the half sum of the weights in $\lieu$, $W(G,T)$ is the Weyl group and $\ell$ is the length function. As $G$ is covered by the conjugates of $T$, and as $\Theta$ is a class function, $\Theta_V|_T$ already determines $\Theta_V$ on all of $G$, analogous to Cartan's observation about $\mathcal F$ above.

Now $V$ is also a module for the complexified Lie algebra $\lieg$ of $G$, and it turns out that if we choose $\liel$ as the complexified Lie algebra of $T$, then we have the analogous identity
\begin{equation}
\mathcal F(V)\;=\;
\frac{
\sum_{w\in W(\lieg,\liel)}
(-1)^{\ell(w)}
[\lambda+\rho(\lieu))-\rho(\lieu)]
}
{
\sum_{w\in W(\lieg,\liel)}
(-1)^{\ell(w)}
[w(\rho(\lieu))-\rho(\lieu)]},
\label{eq:lieweylcharacterformula}
\end{equation}
in the localization
$$
K(\liel)[W_\lieq^{-1}],
$$
where
$$
W_\lieq\;\;\;:=
\;\;\;\sum_{w\in W(\lieg,\liel)}
(-1)^{\ell(w)}
[w(\rho(\lieu))-\rho(\lieu)].
$$
As a matter of fact the map
$$
c:\;\;\;K(\lieg)\to K(\liel)[W_\lieq^{-1}],
$$
induced by
$$
V\;\mapsto\;
\frac{
\sum\limits_{w\in W(\lieg,\liel)}
(-1)^{\ell(w)}
[\lambda+\rho(\lieu))-\rho(\lieu)]
}
{
\sum\limits_{w\in W(\lieg,\liel)}
(-1)^{\ell(w)}
[w(\rho(\lieu))-\rho(\lieu)]},
$$
is still injective, and thus may be interpreted as yet another notion of character.

Harish-Chandra managed to generalize $\Theta$ to finite length representations of real reductive groups, i.e.\ which amounts to defining distribution characters for $(\lieg,K)$-modules, and he proved an analogue of the Weyl character formula \eqref{eq:weylcharacterformula} for the discrete series \cite{harishchandra1953,harishchandra1954a,harishchandra1954b}.

More generally Assume that $X$ is an admissible representation of $G$ and that the multiplicities of the $K$-types in $X$ are bounded by a polynomial in their infinitesimal character. Then for any compactly supported $f:G\to\CC$ the operator
$$
\rho(f)\;:=\;
\int_G f(g)\cdot \rho_X(g)\, dg
$$
is of trace class and
$$
\Theta_X:\;\;\;f\;\mapsto\;{\rm tr}(\rho(f))
$$
defines a distribution on $G$. This is Harish-Chandra's global character as defined in loc.\ cit..

However there is no direct way to define $\Theta$ in more general contexts, for example for non-admissible modules: The operators appearing in the analytic definition are no more of trace class. Similarly in the infinite-dimensional setting $\mathcal F$ still makes sense for Verma-modules, but for $(\lieg,K)$-modules this notion is no more meaningful either.

It turns out that the only notion surving is the map $c$. Of course for this purpose $c$ has to be defined without falling back to $\mathcal F$ or $\Theta$. This is possible, and the motivation stems from yet another incarnation of the Weyl character formulae \eqref{eq:weylcharacterformula} and \eqref{eq:lieweylcharacterformula} given by Bertram Kostant \cite{kostant1961} in 1961:
\begin{equation}
c(V)\;=\;
\frac{
\sum\limits_{q\in\ZZ}
(-1)^q
[H^q(\lieu;V)]
}
{
\sum\limits_{q\in\ZZ}
(-1)^q
[H^q(\lieu;{\bf 1})]}.
\label{eq:kostantweylcharacterformula}
\end{equation}
Here the {\em Lie algebra cohomology} $H^q(\lieu;-)$ is the $q$-th right derived functor of the functor $H^0(\lieu;-)$ considered by Elie Cartan. It vanishes for $q<0$ and $q>\dim\lieu$, thus the above sums are indeed finite.

There are several advantages of applying homological methods. First of all cohomology is well defined for {\em every} $(\lieg,K)$-module $V$, which in principle allows us to write down the expression \eqref{eq:kostantweylcharacterformula} for {\em any} $(\lieg,K)$-module $V$. A crucial limitation is that we still need a meaningful ambient Grothendieck group. Grothendieck groups may be defined for every essentially small abelian category, but they tend to collapse once multiplicities are infinite. However this isn't always a bad thing, as things may sometimes be arranged such that some part of the group collapses, but another doesn't, and the study of the latter may even be simplified by the annihilation of the other. We will encounter similar phenomena related to localization, and it even turns out that interestingly vanishing still tells us something.

Another reason why cohomological methods work out here is the extended formalism homological algebra provides. Lie algebra cohomology is very well behaved, and we will see that the fundamental desirable properties (A), (M), and partially (I), all correspond to fundamental properties of cohomology. We will also see other fundamental properties that cohomologically defined characters possess, which also have their classical analogues, yet that we refrained from adding to the above list in order not to obscure the picture.

Harish-Chandra's global character theory works well for finite length modules, as those are small \lq{}by definition\rq{} and also by Harish-Chandra's Admissibility Theorem, which makes the analytic definition of the distribution character work. If we apply our theory to the category of finite length modules, we essentially recover Harish-Chandra's characters, and his results tell us that localization does no harm here.

One fundamental problem then is how to define \lq{}small\rq{} categories. Motivated by our study of localization, we give definitions which are motivated by hypothetical generalizations of Harish-Chandra's Admissibility Theorem for more general branching problems. We will come back to this in section 6.

\section{Reductive pairs and $(\lieg,K)$-modules}

In this section we introduce the fundamental notions and properties of Harish-Chandra modules. In the literature the term {\em Harish-Chandra module} appears in different variations. The common definitions differ slightly by the finiteness conditions (finitely generatedness, admissibility, ...) imposed. Therefore we prefer the term $(\lieg,K)$-module, which a priori imposes no finiteness conditions at all (except local $K$-finiteness). We use the expression Harish-Chandra module only informally, and understand it synonymously for $(\lieg,K)$-module.

\subsection{Reductive pairs}

Let $G$ be a reductive Lie group with finite component group. Then all maximal compact subgroups in $G$ are conjugate and share the same component group with $G$. We fix one maximal compact subgroup $K\subseteq G$, and write write $\lieg_0$ for the Lie algebra of the connected component $G^0\subseteq G$ and denote $\lieg=\CC\otimes_\RR\lieg_0$ its complexification. Then $(\lieg,K)$ is a reductive pair, and there is a natural dictionary between reductive pairs and reductive Lie groups $G$ as above.

Strictly speaking a reductive pair consists of more data than the notation suggests: The map $\lieg_0\to\lieg$ is part of the datum, the extension of the adjoint action of $K$ on $\liek_0$ to $\lieg_0$ is, as is the Cartan involution $\theta:\lieg_0\to\lieg_0$, inducing a Cartan decomposition
$$
\lieg_0\;=\;\liep_0\oplus\liek_0
$$
of the Lie algebra $\lieg_0$ into $(-1)$- resp.\ $1$-eigen spaces ($\liek_0$ being identified with ${\rm Lie}(K^0)$), and the non-degenerate symmetric bilinear form $\langle\cdot,\cdot\rangle$ on $\lieg_0$ generalizing the Killing form.

Then the map
\begin{equation}
\liep_0\times K\;\to\;G,\;\;\;(p,k)\mapsto \exp(p)\cdot k
\label{eq:Gexp}
\end{equation}
is a diffeomorphism. Using this map as a blueprint, one may eventually reconstruct $G$ as a reductive Lie group entirely starting from the information provided by the reductive pair $(\lieg,K)$. For details the reader may consult \cite[Chap.\ IV]{book_knappvogan1995}.

As an example, consider the reductive Lie group $\GL_n(\RR)$. Its complexified Lie algebra $\liegl_n$ consists of all complex $n\times n$-matrices, and $\liegl_{n,0}$ consists of the (real) subalgebra of real $n\times n$ matrices. The Lie bracket is the usual commutator bracket. A maximal compact subgroup in $\GL_n(\RR)$ is the group $\Oo(n)$ of orthogonal matrices, i.e.\ real matrices $A$ satisfying $A\cdot A^t={\bf1}_n$. Then $\Oo(n)$ acts on $\liegl_n$ and $\liegl_{n,0}$ via conjugation and $(\liegl_n,\Oo(n))$ is the reductive pair corresponding to $\GL_n(\RR)$. Here $\theta:\liegl_n\to\liegl_n$ is the map $g\mapsto -g^t$. If we are interested in the connected group $\GL_n(\RR)^0$ we obtain correpondingly $(\liegl_n,\SO(n))$.

Similarly the group $\SL_n(\RR)$ corresponds to the pair $(\liesl_n,\SO(n))$, where $\liesl_n$ denotes all complex $n\times n$-matrices of trace $0$.

\subsection{Modules for pairs}

Reductive pairs are a motivation for the following more general notion of {\em pair}. A pair $(\liea,B)$ consists of a finite-dimensional complex Lie algebra $\liea$, a compact Lie group $B$, a monomorphism of complex Lie algebras
$$
\CC\otimes_\CC{\rm Lie}(B^0)=:\lieb\;\to\;\liea,
$$
and an extension of the adjoint action of $B$ on $\lieb$ to all of $\liea$, whose differential is the adjoint action of $\lieb$ on $\liea$. Then an $(\liea,B)$-module is a complex vector space $X$ with actions of both $\liea$ and $B$ subject to the following conditions:
\begin{itemize}
\item[($H_1$)] The actions of $\liea$ and $B$ on $X$ are compatible:
$$
\forall a\in\liea,\;b\in B,\;x\in X:\;\;\;b\cdot a\cdot b^{-1}\cdot x\;=\;[{\rm Ad}(b)a]\cdot x.
$$
\item[($H_2$)] $X$ is {\em locally $B$-finite}:
$$
\forall x\in X:\;\;\;\dim_\CC\langle B\cdot x\rangle_\CC\;<\;\infty
$$
and with respect to the (unique) natural topology on $\langle B\cdot x\rangle_\CC$ the representation
$$
B\to\Aut_\CC(\langle B\cdot x\rangle_\CC)
$$
is continuous.
\item[($H_3$)] The differential of the action of $B$ is the action of $\lieb\subseteq\liea$ on $X$, i.e.\ 
$$
\forall b\in\lieb_0,\; x\in X:\;\;\;b\cdot x\;=\;\left[\frac{d}{dt}\frac{\exp(tb)\cdot x-x}{t}\right]_{t=0}.
$$
\end{itemize}
In this generality the definition is due to Lepowsky. We remark that ($H_3$) is meaningful, as axiom ($H_2$) implies that the action of $B$ is smooth on the subrepresentation generated by $x\in X$.

A map $X\to Y$ of $(\liea,B)$-modules is a $\CC$-linear map which is compatible with the actions of $\liea$ and $B$ in the obvious way. In particular we obtain the category $\mathcal C(\liea,B)$ of all $(\liea,B)$-modules.

An important consequence of the local $K$-finiteness is
\begin{proposition}\label{prop:kdecomp}
Let $B$ be any compact Lie group with complexified Lie algebra $\lieb$. Then every irreducible $(\lieb,B)$-module $Y$ is finite-dimensional and every $(\lieb,B)$-module $X$ is of the form
$$
X\;\cong\;\bigoplus_{Y\in\widehat{B}} X_Y,
$$
where $Y$ runs through the irreducibles and $X_Y$ denotes the $Y$-isotypic subspace in $X$.
\end{proposition}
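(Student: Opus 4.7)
The plan is to prove the two assertions in sequence, both drawing on axioms $(H_2)$ and $(H_3)$ together with the Peter--Weyl-style complete reducibility of finite-dimensional continuous representations of the compact group $B$.

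For the first assertion, I would fix an irreducible $(\lieb,B)$-module $Y$ and pick any nonzero $y\in Y$. By $(H_2)$ the span $V:=\langle B\cdot y\rangle_\CC$ is finite-dimensional and carries a continuous $B$-action with respect to its unique natural topology. The crucial point is to check that $V$ is already stable under the $\lieb$-action: this is forced by $(H_3)$, since on the smooth finite-dimensional $B$-representation $V$ the operator $b\cdot$ for $b\in\lieb_0$ agrees with the derivative of $\exp(tb)$ acting on $V$, hence preserves $V$; the same then holds after complexifying. Thus $V$ is a nonzero $(\lieb,B)$-submodule of $Y$, so by irreducibility $Y=V$ is finite-dimensional.

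For the decomposition statement, I define $X_Y$ for each $Y\in\widehat B$ as the sum of all $(\lieb,B)$-submodules of $X$ that are isomorphic to $Y$. The directness of the sum $\sum_Y X_Y\subseteq X$ is a standard consequence of Schur's lemma applied in the category $\mathcal C(\lieb,B)$: if some $x\in X_Y\cap\sum_{Y'\neq Y}X_{Y'}$ were nonzero, projecting to the $Y$-isotypic piece would produce a nontrivial $(\lieb,B)$-map between non-isomorphic irreducibles, contradicting Schur. It remains to show that every $x\in X$ lies in $\sum_Y X_Y$. For this I again use $(H_2)$ to form the finite-dimensional continuous $B$-representation $W:=\langle B\cdot x\rangle_\CC$, and invoke complete reducibility for continuous finite-dimensional representations of the compact group $B$ to write $W=\bigoplus_i W_i$ as a sum of $B$-irreducibles. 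Applying $(H_3)$ to each $W_i$ as in the first part shows that each $W_i$ is in fact an $(\lieb,B)$-submodule of $X$, hence is contained in $X_{[W_i]}$, so $x\in W\subseteq\sum_Y X_Y$.

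The main structural point to verify carefully is the bookkeeping between the purely $B$-theoretic Peter--Weyl decomposition of each $\langle B\cdot x\rangle_\CC$ and the $(\lieb,B)$-module-theoretic decomposition that the statement demands. All the weight of this transition lies on axiom $(H_3)$, which guarantees that inside each locally $B$-finite piece the $\lieb$-action is completely determined by (and preserves the same invariant subspaces as) the $B$-action; without $(H_3)$ a $B$-subrepresentation need not be $\lieb$-stable and the proof would break. I would expect this compatibility check to be the only subtlety; once it is in place, both the finite-dimensionality of irreducibles and the isotypic decomposition reduce to the classical Peter--Weyl theorem for $B$.
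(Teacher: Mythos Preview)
The paper states this proposition without proof, so there is no argument to compare against. Your proof is correct and is the standard one: local $B$-finiteness $(H_2)$ reduces everything to finite-dimensional continuous $B$-representations, where Peter--Weyl gives complete reducibility, and $(H_3)$ ensures that $B$-stable finite-dimensional subspaces are automatically $\lieb$-stable (since the $\lieb$-action is the differential of the $B$-action and the subspace is closed). One minor comment: your phrasing of the directness argument via ``projecting to the $Y$-isotypic piece'' is slightly circular, since the existence of such a projection is part of what you are proving; it is cleaner to reduce to the finite-dimensional case by noting that any purported relation $\sum x_i=0$ with $x_i\in X_{Y_i}$ already takes place inside a finite-dimensional $B$-stable subspace, where the isotypic decomposition is known. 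This is cosmetic and does not affect correctness.
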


\begin{corollary}
The categories $\mathcal C_{\rm fd}(\lieb,B)$ and $\mathcal C_{\rm fd}(B)$ of finite-dimensional $(\lieb,B)$-modules resp.\ finite-dimensional continuous $B$-representations are naturally equivalent.
\end{corollary}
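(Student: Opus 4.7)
The plan is to exhibit quasi-inverse functors $\mathcal F:\mathcal C_{\rm fd}(\lieb,B)\to\mathcal C_{\rm fd}(B)$ and $\mathcal G:\mathcal C_{\rm fd}(B)\to\mathcal C_{\rm fd}(\lieb,B)$ and check they are mutually inverse, with all compatibilities essentially forced by the axioms $(H_1)$--$(H_3)$. The forgetful direction is nearly immediate: given a finite-dimensional $(\lieb,B)$-module $X$, set $\mathcal F(X)$ to be $X$ with its $B$-action only. Continuity is the subtle point, but it is handed to us by axiom $(H_2)$: $X$ is spanned by finitely many cyclic subspaces $\langle B\cdot x_i\rangle_\CC$, on each of which the $B$-action is continuous by hypothesis, and a linear action on a finite-dimensional space that is continuous on a spanning collection of stable subspaces is continuous on the whole.

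For the reverse direction I would, given a finite-dimensional continuous representation $\rho:B\to\Aut_\CC(V)$, first invoke the classical automatic-smoothness theorem (any continuous homomorphism between Lie groups is real-analytic) to conclude that $\rho|_{B^0}$ is smooth, so that $\lieb_0$ acts on $V$ by differentiation, namely $b\cdot v:=\left[\tfrac{d}{dt}\rho(\exp tb)v\right]_{t=0}$. Extending this $\RR$-linearly to a $\CC$-linear action of $\lieb=\CC\otimes_\RR\lieb_0$ defines $\mathcal G(V)$. Axiom $(H_2)$ holds trivially since $V$ is finite-dimensional; $(H_3)$ holds by construction; and $(H_1)$ reduces, via differentiation in one variable, to the standard identity that $d\Ad=\ad$ together with the fact that $\rho(b)$ intertwines the $\lieb_0$-action by its derivative, i.e.\ the chain rule applied to $t\mapsto \rho(b\exp(ta)b^{-1})=\rho(b)\rho(\exp tAd(b)a)\rho(b)^{-1}$.

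It remains to see these functors are mutually inverse. The composite $\mathcal F\circ\mathcal G$ is literally the identity on objects and morphisms, since $\mathcal G$ only adds data and $\mathcal F$ forgets exactly that data. For $\mathcal G\circ\mathcal F$, given $X\in\mathcal C_{\rm fd}(\lieb,B)$, the $\lieb$-action on $X$ is determined by its restriction to $\lieb_0$, and axiom $(H_3)$ states that this restriction agrees with the differential of the $B$-action on the cyclic subrepresentations that cover $X$. Hence the reconstructed module $\mathcal G(\mathcal F(X))$ has the same $\lieb$-action as $X$, giving a canonical isomorphism natural in $X$. Morphisms pose no difficulty because a $B$-equivariant $\CC$-linear map between smooth $B$-representations automatically intertwines the differentiated Lie algebra action.

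The one genuine analytic ingredient, and the only place the argument is not purely formal manipulation of axioms, is the automatic smoothness of continuous finite-dimensional representations of Lie groups used in constructing $\mathcal G$; everything else is a bookkeeping exercise of verifying $(H_1)$--$(H_3)$ and naturality. I would cite this classical fact rather than reprove it, referring for instance to \cite{book_knappvogan1995}.
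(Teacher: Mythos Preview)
Your proposal is correct. The paper states this corollary without proof, treating it as an immediate consequence of the preceding proposition on $K$-decomposition; the argument you give is the natural one, and indeed the paper uses exactly the same key analytic input (automatic smoothness of continuous Lie group homomorphisms) when proving the analogous statement for reductive pairs in the very next proposition. One small typo: in your $(H_1)$ verification you wrote $\rho(b)\rho(\exp t\Ad(b)a)\rho(b)^{-1}$ where you meant to equate $\rho(b)\rho(\exp ta)\rho(b)^{-1}$ with $\rho(\exp t\Ad(b)a)$, but the intended computation is clear.
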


The same remains true for reductive pairs.

\begin{proposition}
Let $(\lieg,K)$ be a reductive pair and $G$ the corresponding real reductive Lie group. Then the categories of finite-dimensional $(\lieg,K)$-modules and finite-dimensional continuous $G$-representations are equivalent.
\end{proposition}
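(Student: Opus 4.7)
The plan is to exhibit mutually inverse functors $\Phi:\mathcal C_{\rm fd}(G)\to\mathcal C_{\rm fd}(\lieg,K)$ and $\Psi:\mathcal C_{\rm fd}(\lieg,K)\to\mathcal C_{\rm fd}(G)$ and then observe that they are equivalences. The functor $\Phi$ is the obvious one: given a finite-dimensional continuous representation $\rho:G\to\GL(V)$, the restriction $\rho|_K$ is a continuous $K$-action, and since $V$ is finite-dimensional $\rho$ is automatically smooth, so differentiating along $\lieg_0$ and extending $\CC$-linearly produces a $\lieg$-action on $V$. Axiom ($H_1$) reduces to the identity $\rho(k)d\rho(X)\rho(k)^{-1}=d\rho(\Ad(k)X)$ obtained by differentiating $\rho(k\exp(tX)k^{-1})=\rho(\exp(t\Ad(k)X))$ at $t=0$; ($H_2$) is automatic since $\dim_\CC V<\infty$; and ($H_3$) holds by construction.

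The substantive direction is $\Psi$. Given a finite-dimensional $(\lieg,K)$-module $X$, the $\lieg_0$-action is a Lie-algebra map $\lieg_0\to\liegl(X)$, and because $\liegl(X)=\Lie(\GL(X))$ and $\GL(X)$ is a Lie group, this integrates uniquely to a smooth homomorphism $\widetilde\rho:\widetilde{G^0}\to\GL(X)$ on the universal cover of $G^0$. I then need to show that $\widetilde\rho$ descends to $G^0$, i.e.\ that the kernel $\Gamma$ of $\widetilde{G^0}\to G^0$ acts trivially. For this I use the Cartan decomposition: the diffeomorphism \eqref{eq:Gexp} lifts to $\liep_0\times\widetilde{K^0}\to\widetilde{G^0}$, which forces $\Gamma\subseteq\widetilde{K^0}$. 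The restriction $\widetilde\rho|_{\widetilde{K^0}}$ and the lift to $\widetilde{K^0}$ of the given $K^0$-action on $X$ have the same differential and $\widetilde{K^0}$ is connected, so they coincide; but the latter factors through $K^0$, so $\Gamma$ acts trivially. Hence $\widetilde\rho$ descends to a representation $\rho^0:G^0\to\GL(X)$.

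Next I extend $\rho^0$ to all of $G$. Because $G$ and $K$ share the same component group one has $G=G^0\cdot K$, and on the intersection $G^0\cap K=K^0$ the representations $\rho^0|_{K^0}$ and $\rho_K|_{K^0}$ both integrate the same $\liek$-action on the connected group $K^0$, hence agree. Using compatibility ($H_1$) to check that $\rho^0$ and $\rho_K$ normalize each other correctly, the formula $\rho(\exp(p)\cdot k):=\exp(d\rho(p))\cdot\rho_K(k)$ defines a continuous group homomorphism $\rho:G\to\GL(X)$; this is $\Psi(X)$. A short computation with the Cartan decomposition shows $\Phi\circ\Psi\cong\id$ and $\Psi\circ\Phi\cong\id$, because on both sides the $K$-action and the $\lieg$-action together determine a unique $G$-action by the analogous argument.

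The main obstacle is the integration step: one must know that a Lie-algebra map $\lieg_0\to\liegl(X)$ into a linear Lie algebra always exponentiates to the simply connected cover, and then control the center of that cover using the $K$-structure rather than ad hoc assumptions on $G$. All of these arguments are standard consequences of the Cartan decomposition and of Lie's third theorem applied to the finite-dimensional target $\GL(X)$; a systematic exposition is \cite[Chap.~IV]{book_knappvogan1995}.
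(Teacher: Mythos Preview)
Your proof is correct and follows essentially the same strategy as the paper's own proof: automatic smoothness of continuous homomorphisms for the direction $\Phi$, and for $\Psi$ the explicit formula $\rho(\exp(p)\cdot k)=\exp(d\rho(p))\cdot\rho_K(k)$ coming from the Cartan decomposition \eqref{eq:Gexp}. The paper's version is briefer, simply remarking that \eqref{eq:Gexp} gives $\pi_0(G)=\pi_0(K)$ and $\pi_1(G)=\pi_1(K)$ so that ``all topological obstructions are already taken care of by $K$''; your passage through the universal cover $\widetilde{G^0}$ and the descent argument using $\Gamma\subseteq\widetilde{K^0}$ is exactly what unpacks that $\pi_1$ remark.
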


\begin{proof}
The hard part is to show that every continuous representation of $G$ is smooth. This follows from the fact that every continuous group homomorphisms of Lie groups is smooth. Then we apply this to the continuous group homomorphism $G\to \GL(V)$ corresponding to our representation. The finite-dimensionality of $V$ is crucial, as it guarantees that $\GL(V)$ is a Lie group. Therefore $V$ is also a $(\lieg,K)$-module. The other way around, departing from a $(\lieg,K)$-module $V$, we'd like to lift the representation $\rho:K\to\GL(V)$ (uniquely) to $G$. That this is indeed possible follows from \eqref{eq:Gexp}, which essentially tells us that $\pi_0(G)=\pi_0(K)$ and $\pi_1(G)=\pi_1(K)$, i.e.\ all topological obstructions are already taken care of by $K$. More explicitly we may define
$$
\rho_V(\exp(p)\cdot k)\;:=\;\exp(\rho(p))\cdot\rho(k),
$$
for $p\in\liep_0$ and $k\in K$. This is the unique extension to $G$. Again the finite-dimensionality is crucial, as it guarantees the existence of $\exp(\rho(p))\in\GL(V)$.
\end{proof}

\subsection{Internal constructions}

For any two $(\liea,B)$-modules $X$ and $Y$ the tensor product $X\otimes_\CC Y$ acquires a natural action of the pair $(\liea\times\liea,B\times B)$, which is explicitly given by
$$
(a_1,a_2)\cdot (x\otimes y)\;=\;(a_1\cdot x)\otimes y\;+\;x\otimes(a_2\cdot y)
$$
for $a_1,a_2\in\liea$ and $x\in X$, $y\in Y$. For $b_1,b_2\in B$ we have analogously
$$
(b_1,b_2)\cdot (x\otimes y)\;=\;(b_1\cdot x)\otimes (b_2\cdot y).
$$
Then the pullback of this action along the diagonal map
$$
\Delta:\;\;\;(\liea,B)\;\to\;(\liea\times\liea,B\times B)
$$
$$
(a,b)\;\mapsto\;((a,a),(b,b))
$$
turns $X\otimes_\CC Y$ into an $(\liea,B)$-module.

Similarly we may consider the space $\Hom_\CC(X,Y)$ of linear maps $f:X\to Y$. This acquires an action of $(\liea\times\liea,B\times B)$ which is given by
$$
[(a_1,a_2)\cdot f](x)\;=\;a_2\cdot f(-a_1\cdot x)
$$
and similarly
$$
[(b_1,b_2)\cdot f](x)\;=\;b_2\cdot f(b_1^{-1}\cdot x).
$$
However in general this action is not locally $B\times B$-finite. Therefore we are obliged to pass to the subspace
$$
\Hom_\CC(X,Y)_{B\times B}
$$
of locally $B\times B$-finite vectors. This then is an $(\liea\times\liea,B\times B)$-module. In order to obtain an $(\liea,B)$-module, we may again consider the pullback along the diagonal $\Delta$. Yet in this case there is a subtelty, as in general we obtain the desired $(\liea,B)$-module only as the $B$-finite subspace
$$
\Hom_\CC(X,Y)_{B}\;=\;\Hom_\CC(X,Y)_{\Delta(B)}
$$
of $\Hom_\CC(X,Y)$, as the latter is in general strictly bigger than the subspace of $B\times B$-finite vectors. However if $X$ is finite-dimensional then all these spaces aggree with $\Hom_\CC(X,Y)$.

We define the {\em dual} of $X$ as
$$
X^\vee\;:=\;\Hom_\CC(X,{\bf1})_B,
$$
where ${\bf1}$ denotes the trivial $(\liea,B)$-module which is isomorphic to $\CC$ as a vector space.

As a $B$-module we may think of $X^\vee$ as the direct sum of the locally $B$-finite duals $(X_Y)^\vee$ of the isotypic components $X_Y$ in the sense of Proposition \ref{prop:kdecomp}. In particular if all $X_Y$ are finite-dimensional, then $X$ is {\em reflexive}, i.e.\ the canonical bidual map $X\to X^{\vee\vee}$ is an {\em isomorphism} in this case.

We have a natural monomorphism
\begin{equation}
\psi_{X,Y}:\;\;\;X^\vee\otimes_\CC Y\;\to\;\Hom_\CC(X,Y)_B,
\label{eq:tensorhom}
\end{equation}
$$
\xi\otimes y\;\;\mapsto\;\;[x\;\mapsto\;\xi(x)\cdot y]
$$
of $(\liea,B)$-modules. This is an isomorphism whenever $X$ is reflexive.



\subsection{The Harish-Chandra map and infinitesimal characters}

We write $Z(\lieg)$ for the center of the universal enveloping algebra $U(\lieg)$ of $\lieg$. We say that a $(\lieg,K)$-module $X$ has an {\em infinitesimal character}, if $Z(\lieg)$ acts via scalars in $X$. Then the character
$$
\chi:\;\;\;Z(\lieg)\to\CC
$$
defined by this action is called the {\em infinitesimal character} of $X$. It is characterized by the identity
$$
z\cdot x\;=\;\chi(z) x
$$
for all $z\in Z(\lieg)$ and all $x\in X$.

\begin{proposition}[Dixmier]\label{prop:dixmier}
For any irreducible $(\lieg,K)$-module $X$ which remains irreducible as a $U(\lieg)$-module we have
$$
\End_{\lieg,K}(X)=\CC\cdot{\rm id}_X.
$$
In particular such an $X$ has an infinitesimal character.
\end{proposition}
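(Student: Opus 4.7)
The plan is to prove this by the classical countability argument, often called Dixmier's (or Quillen's) lemma: Schur's lemma combined with a cardinality obstruction, which upgrades endomorphism rings from division algebras to $\CC$ itself whenever the module has at most countable dimension over the algebraically closed field $\CC$.

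First I would reduce to a statement about $U(\lieg)$-endomorphisms. Since $\End_{\lieg,K}(X) \subseteq \End_{U(\lieg)}(X) =: D$, it suffices to show $D = \CC \cdot \id_X$. By the hypothesis, $X$ is irreducible as a $U(\lieg)$-module, so the ordinary Schur lemma already yields that $D$ is a division algebra over $\CC$. Next I would establish that $D$ has at most countable $\CC$-dimension. Picking any nonzero $x \in X$, irreducibility gives $X = U(\lieg) \cdot x$; since $U(\lieg)$ is the union of its finite-dimensional filtration pieces $U_n(\lieg)$, this forces $\dim_\CC X \leq \aleph_0$. The evaluation map $\varepsilon_x : D \to X$, $\phi \mapsto \phi(x)$, is injective: if $\phi(x) = 0$ then $\phi(u \cdot x) = u \cdot \phi(x) = 0$ for all $u \in U(\lieg)$, hence $\phi = 0$. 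Therefore $\dim_\CC D \leq \dim_\CC X \leq \aleph_0$.

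The heart of the argument, and the step I expect to be the main (though well-known) obstacle, is the uncountability dichotomy: suppose for contradiction some $\phi \in D$ were not a scalar. Then $\phi - \lambda \cdot \id_X$ is nonzero for every $\lambda \in \CC$ and hence, $D$ being a division algebra, invertible in $D$. This produces a well-defined $\CC$-algebra embedding $\CC(t) \hookrightarrow D$ sending $t \mapsto \phi$. But the family $\{(t-\lambda)^{-1} : \lambda \in \CC\}$ is $\CC$-linearly independent in $\CC(t)$ (a standard partial-fractions argument), so $\dim_\CC \CC(t)$ is uncountable, contradicting $\dim_\CC D \leq \aleph_0$. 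Hence every $\phi \in D$ is a scalar and $D = \CC \cdot \id_X$.

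For the final clause, any $z \in Z(\lieg)$ commutes with the $U(\lieg)$-action on $X$ and with $K$ (since $K$ normalizes $Z(\lieg)$ trivially via the adjoint action on $U(\lieg)$), so multiplication by $z$ lies in $\End_{\lieg,K}(X) \subseteq D = \CC \cdot \id_X$. Thus $z$ acts as a scalar $\chi(z) \in \CC$, and the assignment $z \mapsto \chi(z)$ is manifestly a $\CC$-algebra homomorphism $Z(\lieg) \to \CC$, giving the infinitesimal character of $X$.
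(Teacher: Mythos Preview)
Your argument is correct and is precisely the classical Dixmier countability argument. The paper does not actually supply a proof of this proposition; it only refers the reader to \cite{dixmier1963}, \cite[Proposition~2.6.8]{book_dixmier1977}, and \cite[Proposition~4.87]{book_knappvogan1995}, and the proof found in those references is essentially the one you wrote.

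One small remark on the last paragraph: the parenthetical claim that $K$ acts trivially on $Z(\lieg)$ via the adjoint action is not in general true when $K$ is disconnected (a component of $K$ permuting simple factors of $\lieg$ will permute their Casimir elements). But this detour is unnecessary: you have already established $\End_{U(\lieg)}(X)=\CC\cdot\id_X$, and for $z\in Z(\lieg)$ the map $x\mapsto z\cdot x$ is tautologically a $U(\lieg)$-endomorphism, hence a scalar. No $K$-equivariance needs to be checked at that point.
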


Proposition \ref{prop:dixmier} is a generalization of Schur's Lemma. For a proof see \cite{dixmier1963}, and also \cite[Proposition 2.6.8]{book_dixmier1977} or \cite[Proposition 4.87]{book_knappvogan1995}.

The $U(\lieg)$-irreducibility is automatically satisfied whenever $K$ is connected and $X$ irreducible as a $(\lieg,K)$-module.

Fix a Borel subalgebra $\lieb\subseteq\lieg$, with Levi decomposition
$$
\lieb\;=\;\lieh+\lieu.
$$
Then the Poincar\'e-Birkhoff-Witt Theorem tells us that we have a direct sum decomposition
$$
U(\lieg)\;\;=\;\;U(\lieh)\;\oplus\;(\lieu^- U(\lieg)+U(\lieg)\lieu),
$$
and we denote the projection onto the first summand by $p_\lieu$. We write
$$
\rho(\lieu)\;:=\;\frac{1}{2}\sum_{\alpha\in\Delta(\lieu,\lieh)}\alpha\;\in\;\lieh^*.
$$
The map
$$
h\;\mapsto\;h-[\rho(\lieu)](h)\cdot 1_{U(\lieh)}
$$
for $h\in\lieh$ turns out to be a homomorphism of Lie-algebras, and therefore extends to an algebra homomorphism
$$
\rho_\lieu:\;\;\;U(\lieh)\to U(\lieh)
$$
by universality. Finally we set
$$
\gamma_\lieu\;:=\;\rho_\lieu\circ p_\lieu:\;\;\;U(\lieg)\to U(\lieh).
$$
\begin{theorem}[Harish-Chandra]\label{thm:harishchandraisomorphism}
The map $\gamma_\lieu$ induces an algebra isomorphism
$$
\gamma:\;\;\;Z(\lieg)\to U(\lieh)^{W(\lieg,\lieh)},
$$
which only depends on $\lieh$, i.e.\ is independentent of the choice of $\lieu$.
\end{theorem}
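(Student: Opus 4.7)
The plan is to reduce everything to the action of $Z(\lieg)$ on Verma modules and then, for the bijectivity statement, to invoke Chevalley's restriction theorem via an associated graded argument.

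First, for $\lambda\in\lieh^*$ let $M(\lambda)$ be the Verma module of highest weight $\lambda$ with generator $v_\lambda$. The PBW decomposition stated just above the theorem gives
$$
z\;\equiv\;p_\lieu(z)\pmod{\lieu^- U(\lieg)+U(\lieg)\lieu}.
$$
Since $U(\lieg)\lieu$ annihilates $v_\lambda$, vectors in $\lieu^-U(\lieg)\cdot v_\lambda$ have strictly lower weight than $v_\lambda$, and $Z(\lieg)$ preserves weight spaces, any $z\in Z(\lieg)$ acts on $v_\lambda$ by the scalar $\chi_\lambda(z)=[p_\lieu(z)](\lambda)$, after identifying $U(\lieh)=S(\lieh)$ with polynomial functions on $\lieh^*$. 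Multiplicativity of $\chi_\lambda$ then forces $p_\lieu|_{Z(\lieg)}$, and hence $\gamma$, to be an algebra homomorphism into $U(\lieh)$.

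The next step is $W(\lieg,\lieh)$-invariance. For a simple root $\alpha$ and weights $\lambda$ with $\langle\lambda+\rho(\lieu),\alpha^\vee\rangle\in\NN$, the classical BGG embedding $M(s_\alpha\cdot\lambda)\hookrightarrow M(\lambda)$ with dot action $s_\alpha\cdot\lambda:=s_\alpha(\lambda+\rho(\lieu))-\rho(\lieu)$ gives $\chi_\lambda=\chi_{s_\alpha\cdot\lambda}$. As such $\lambda$ are Zariski dense, this equality extends to all $\lambda$ and all $w\in W(\lieg,\lieh)$. The $\rho$-shift built into $\rho_\lieu$ converts the dot action into the ordinary $W$-action on $U(\lieh)$, so $\gamma(z)\in U(\lieh)^{W(\lieg,\lieh)}$. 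Independence from the choice of $\lieu$ is then automatic: any two choices of nilpotent radical of a Borel containing $\lieh$ are $W(\lieg,\lieh)$-conjugate, and $W$-invariance of $\gamma(z)$ makes the resulting element intrinsic.

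For the bijectivity, pass to the associated graded of the PBW filtration on $U(\lieg)$. Because $\rho_\lieu$ is a lower-order perturbation of the identity, it disappears on the graded piece, so the associated graded of $\gamma$ coincides with the Chevalley restriction map
$$
S(\lieg)^\lieg\;\to\;S(\lieh)^{W(\lieg,\lieh)},
$$
after the standard identification $\mathrm{gr}\,Z(\lieg)\cong S(\lieg)^\lieg$. Chevalley's restriction theorem asserts that this graded map is an isomorphism, whence both injectivity and surjectivity of $\gamma$ onto $U(\lieh)^{W(\lieg,\lieh)}$ transport degree by degree.

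The main obstacle is this last step, which rests on the nontrivial Chevalley restriction theorem together with the identification of $\mathrm{gr}\,Z(\lieg)$ with $S(\lieg)^\lieg$. The second delicate point is $W$-invariance: it requires either the BGG Verma-embedding theorem or, equivalently, an $\liesl_2$-reduction along each simple root, both considerably deeper than the formal PBW bookkeeping used to establish the homomorphism property.
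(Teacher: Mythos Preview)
The paper does not supply a proof of this theorem: it is stated as a classical result of Harish-Chandra and used as a black box, with the text moving directly to the description of infinitesimal characters via $W(\lieg,\lieh)$-orbits in $\lieh^*$. Your argument is the standard textbook proof (Verma-module action for the homomorphism property, BGG embeddings plus Zariski density for $W$-invariance, and Chevalley restriction on the associated graded for bijectivity) and is correct as outlined; there is nothing in the paper to compare it against beyond the attribution.
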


The map $\gamma$ is called {\em Harish-Chandra isomorphism} and enables us to describe infinitesimal characters via $W(\lieg,\lieh)$-orbits in $\lieh^*$. It turns out that the correspondence
$$
W(\lieg,\lieh)\backslash{}\lieh^*\;\to\;\Hom_{\rm alg}(Z(\lieg),\CC),
$$
$$
W(\lieg,\lieh)\lambda\;\mapsto\;\chi_\lambda:=\lambda\circ\gamma
$$
is bijective, where we implicitly used the universal property of $U(\lieh)$ on the right hand side. Via this correspondence we may say that a $(\lieg,K)$-module $X$ has infinitesimial character $\lambda\in\lieh^*$, if its infinitesimal character coincides with $\chi_\lambda$.

Infinitesimal characters are strong invariants. An easy excercise shows that the infinitesimal character of an irreducible finite-dimensional $\lieg$-module of highest weight $\lambda$ is $\chi_{\lambda+\rho(\lieu)}$. Consequently two irreducible finite-dimensional $\lieg$-modules are isomorphic if and only if their infinitesimal characters coincide.

In general we have the fundamental
\begin{theorem}[Harish-Chandra]\label{thm:harishchandrafinite}
Assume that $K$ is connected. For any fixed $\lambda\in\lieh^*$ there are only finitely many isomorphism classes of irreducible $(\lieg,K)$-modules with infinitesimal character $\lambda$.
\end{theorem}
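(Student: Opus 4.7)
The plan is a Hecke-algebra reduction: for each $K$-type $\tau$ I would produce a finite-dimensional algebra whose simple modules parametrize the irreducible $(\lieg,K)$-modules $X$ with infinitesimal character $\chi_\lambda$ containing $\tau$, and then bound the set of $K$-types that can actually occur.

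For $\tau\in\widehat{K}$, consider the universal $(\lieg,K)$-module with a cyclic generator of $K$-type $\tau$ and infinitesimal character $\chi_\lambda$,
$$
M_\lambda(\tau)\;:=\;\bigl(U(\lieg)\otimes_{U(\liek)}V_\tau\bigr)\bigm/\bigl(\kernel(\chi_\lambda)\cdot(U(\lieg)\otimes_{U(\liek)}V_\tau)\bigr).
$$
Any irreducible $X$ with infinitesimal character $\chi_\lambda$ and $\Hom_K(V_\tau,X)\neq 0$ is a quotient of $M_\lambda(\tau)$. Setting
$$
\mathcal{H}_\lambda(\tau)\;:=\;\End_{(\lieg,K)}(M_\lambda(\tau))^{\rm op}\;\cong\;\Hom_K(V_\tau,M_\lambda(\tau)),
$$
the functor $X\mapsto\Hom_K(V_\tau,X)$ identifies isomorphism classes of such irreducible $X$ with simple modules for a finite-dimensional quotient of $\mathcal{H}_\lambda(\tau)$.

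The crucial input is finite-dimensionality of $\mathcal{H}_\lambda(\tau)$. By PBW, $U(\lieg)\otimes_{U(\liek)}V_\tau\cong S(\liep)\otimes V_\tau$ as $K$-modules, and the Kostant--Rallis theorem gives a $K$-equivariant decomposition $S(\liep)\cong H(\liep)\otimes S(\liep)^K$ in which the harmonic space $H(\liep)$ has each $K$-type of finite multiplicity. Theorem \ref{thm:harishchandraisomorphism} identifies $Z(\lieg)$ with $U(\lieh)^{W(\lieg,\lieh)}$, and this combines with the Chevalley restriction theorem (applied to a Cartan subspace $\liea\subseteq\liep$ extended to $\lieh$) to express $S(\liep)^K$ as a finitely generated module over the image of $Z(\lieg)$. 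Thus passing to the quotient by $\kernel(\chi_\lambda)$ cuts $S(\liep)^K$ down to a finite-dimensional algebra; every $K$-isotype of $M_\lambda(\tau)$ becomes finite-dimensional, and in particular so is $\mathcal{H}_\lambda(\tau)$.

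To finish, I would invoke Vogan's theory of lowest $K$-types: every irreducible $(\lieg,K)$-module $X$ with infinitesimal character $\chi_\lambda$ contains a $K$-type whose highest weight is bounded in norm by a constant depending only on $\|\lambda\|$ and on $(\lieg,K)$. Hence the set $S(\lambda)\subseteq\widehat{K}$ of candidate lowest $K$-types is finite, and the previous step then yields only finitely many isomorphism classes of $X$ in total. The hard part is the finite-dimensionality of $\mathcal{H}_\lambda(\tau)$, resting on Kostant--Rallis together with the compatibility of Chevalley restriction with Theorem \ref{thm:harishchandraisomorphism}; the lowest $K$-type bound is a second nontrivial but classical ingredient, and the rest of the argument is formal.
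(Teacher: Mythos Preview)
Your proposal is correct and follows the same route the paper outlines: the paper gives no full proof but points to exactly your two ingredients, namely an algebraic finiteness result (the cited \cite[Theorem 7.204]{book_knappvogan1995} is the Lepowsky/Hecke-algebra statement you sketch via Kostant--Rallis) and Vogan's minimal $K$-type theory. Your passage from the left $Z(\lieg)$-action on $M_\lambda(\tau)$ to the $S(\liep)^K$-module structure is a little loose---the PBW identification $U(\lieg)\otimes_{U(\liek)}V_\tau\cong S(\liep)\otimes V_\tau$ is only $K$-equivariant, so the comparison really goes through the associated graded---but this is a routine filtration argument and the strategy is sound.
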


The classical proof of this theorem relies on Harish-Chandra's global characters. By algebraic methods one can show that, for any fixed $\lambda$, there are finitely many $K$-type that each irreducible $(\lieg,K)$-module with infinitesimal character $\lambda$ must contain, cf.\ \cite[Theorem 7.204]{book_knappvogan1995}. Then David Vogan's minimal $K$-type theory concludes the proof.

We see in Theorem \ref{thm:harishchandrafinite} that the condition on $X$ of having an infinitesimal character is very strict. We relax it as follows. We say that a $(\lieg,K)$-module $X$ is {\em $Z(\lieg)$-finite}, if the annihilator of $X$ in $Z(\lieg)$ is of finite codimension. Any $Z(\lieg)$-finite module may be decomposed into a direct sum of its $Z(\lieg)$-primary components, which all are $(\lieg,K)$-submodules.

\subsection{Composition factors and multiplicities}

Fix a reductive pair $(\lieg,K)$ and a $(\lieg,K)$-module $X$. We say that $X$ has an irreducible $(\lieg,K)$-module $Y$ as a {\em composition factor} if there are submodules $X_1\subseteq X_0\subseteq X$ such that $X_1/X_0\cong Y$. We write $S(X)$ for the set of submodules of $X$. It is comes with a natural preorder given by set-theoretic inclusion.

The {\em multiplicity} of $Y$ in $X$ is the supremum $m_Y(X)$ of the cardinalities of all totally ordered sets $(I,\leq)$ with the property that there exist injective order-preserving maps $a:I\to S(X)$ and $b:I\to S(X)$ such that for any $i\in I$ we have $a(i)\subseteq b(i)$ and $b(i)/a(i)\cong Y$.

The multiplicity of $Y$ in $X$ is finite if and only if the set $\mu_Y(X)$ of natural numbers $m$ with the property that there exists a natural number $N$ and submodules
$$
X_0\subseteq X_1\subseteq\cdots\subseteq X_N\subseteq X
$$
such that $X_i/X_{i+1}\cong Y$ for $m$ distinct indices $0\leq i<N$ is bounded. If this is the case, then $m_Y(X)=\max\mu_Y(X)$.

Suppose we are given a short exact sequence
$$
0\to A\to B\to C\to 0
$$
of $(\lieg,K)$-modules. Then
\begin{equation}
m_Y(A)+m_Y(C)\;=\;m_Y(B),
\label{eq:multiplicityaddition}
\end{equation}
where \lq$+$\rq{} denotes addition of cardinal numbers.

We say that two $(\lieg,K)$-modules $X$ and $X'$ have the same {\em semi-simplifications} if for all irreducible $Y$ we have $m_Y(X)=m_{Y}(X')$. Proposition \ref{prop:kdecomp} tells us that if $\lieg=\liek$ then the isomorphism class of $X$ depends only on its semi-simplification. However we emphasize that in general a non-zero $(\lieg,K)$-module $X$ may possess no composition factor at all, and also non-trivial extension classes between irreducibles may exist. Consequently semi-simplification is far from being a faithful operation.

\subsection{Admissible and finite length modules}

We call $X$ {\em admissible} if, as a $K$-module, all multiplicities of irreducibles in $X$ are finite. For an admissible $X$ the multiplicities $m_Y(X)$ are necessarily finite for any irreducible $(\lieg,K)$-module $Y$.

We say that $X$ is of {\em finite length} if $X$ has a finite composition series.
\begin{proposition}\label{prop:finitelength}
Let $X$ be a $(\lieg,K)$-module. The following statements are equivalent:
\begin{itemize}
\item[(i)] $X$ is of finite length.
\item[(ii)] $X$ is admissible and finitely generated.
\item[(iii)] $X$ is admissible and $Z(\lieg)$-finite.
\end{itemize}
\end{proposition}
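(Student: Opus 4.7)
The plan is to establish the cycle (i) $\Rightarrow$ (ii) $\Rightarrow$ (iii) $\Rightarrow$ (i), invoking Harish-Chandra's admissibility theorem for irreducible modules and Theorem \ref{thm:harishchandrafinite} at the two nontrivial steps.

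For (i) $\Rightarrow$ (ii), I would pick a composition series $0=X_0\subset X_1\subset\dots\subset X_n=X$. Each quotient $X_i/X_{i-1}$ is irreducible, hence admissible by Harish-Chandra's admissibility theorem for irreducible $(\lieg,K)$-modules. Admissibility then propagates along short exact sequences via the additivity \eqref{eq:multiplicityaddition} applied to each $K$-type, so $X$ itself is admissible. Finite generation is obtained by lifting a generator of each successive quotient.

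For (ii) $\Rightarrow$ (iii), let $x_1,\dots,x_n$ generate $X$. Axiom ($H_2$) makes each $K\cdot x_i$ finite-dimensional and concentrated in finitely many $K$-types. Since $Z(\lieg)$ commutes with the $K$-action, $Z(\lieg)\cdot(K\cdot x_i)$ is again a $K$-submodule lying in those same finitely many isotypic components; admissibility makes this submodule finite-dimensional. Hence $\mathrm{Ann}_{Z(\lieg)}(x_i)$ has finite codimension in $Z(\lieg)$, and the annihilator of $X$, being the intersection of the $\mathrm{Ann}_{Z(\lieg)}(x_i)$, also does.

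The serious step is (iii) $\Rightarrow$ (i). Using that $Z(\lieg)$ is a finitely generated commutative algebra acting on $X$ through a finite-dimensional quotient, decompose $X=\bigoplus_\chi X_\chi$ into its finitely many $Z(\lieg)$-primary components; each $X_\chi$ is admissible, so it suffices to treat a single generalized infinitesimal character $\lambda$. By Theorem \ref{thm:harishchandrafinite} there are finitely many irreducible $(\lieg,K)$-modules $Y_1,\dots,Y_r$ with infinitesimal character $\lambda$, and every composition factor of $X_\chi$ must be one of them. For each $Y_j$ pick a $K$-type $\tau_j$ with $m_{\tau_j}(Y_j)>0$. Iterating \eqref{eq:multiplicityaddition} along any finite chain of subquotients gives
\[
m_{Y_j}(X_\chi)\cdot m_{\tau_j}(Y_j)\;\leq\;m_{\tau_j}(X_\chi),
\]
and the right-hand side is finite by admissibility. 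Thus each $m_{Y_j}(X_\chi)$ is a finite cardinal, and summing the finitely many contributions yields a finite total length. Assembling a composition series is then standard: at each stage, either there is an irreducible submodule one can split off (using admissibility to find a minimal $K$-type whose isotypic component is preserved, and producing an irreducible subquotient there) or the module is zero, and the process terminates in finitely many steps because the multiplicities are bounded.

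The main obstacle is the last implication, where one must combine Theorem \ref{thm:harishchandrafinite} with the minimal $K$-type/admissibility argument to bound both the number and the multiplicities of composition factors; the other two implications are essentially bookkeeping once Harish-Chandra's admissibility theorem for irreducible modules is granted. (If $K$ is not assumed connected, one first passes to the finite-index subgroup $K^0$, where Theorem \ref{thm:harishchandrafinite} applies, and then uses that $(\lieg,K)$-finite length is equivalent to $(\lieg,K^0)$-finite length since $[K:K^0]<\infty$.)
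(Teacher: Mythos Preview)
Your proposal is correct and follows essentially the same strategy as the paper: both hinge on Harish-Chandra's admissibility theorem for irreducibles for (i) $\Rightarrow$ (ii) and on Theorem \ref{thm:harishchandrafinite} for (iii) $\Rightarrow$ (i). The paper's proof is terser, essentially pointing to the literature (Lepowsky for admissibility, Dixmier for the infinitesimal character, and \cite[Corollary 7.207]{book_knappvogan1995} for the last implication), and for (iii) $\Rightarrow$ (i) it invokes the noetherianity of $U(\lieg)$ rather than your direct $K$-type multiplicity bound; your argument is a valid and somewhat more self-contained variant of the same idea.
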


\begin{proof}
If $X$ is an irreducible $(\lieg,K)$-module, then it is admissible \cite{lepowsky1973}. As a module of finite length is finitely generated, this shows that (i) implies (ii).

By Dixmier's Proposition \ref{prop:dixmier}, an irreducible $X$ has an infinitesimal character for $K$ connected, hence (i) implies (iii).

The implication (ii) $\Longrightarrow$ (iii) is standard. The remaining implication (iii) $\Longrightarrow$ (i), may be deduced from Theorem \ref{thm:harishchandrafinite}, using that $U(\lieg)$ is noetherian. See the proof of Corollary 7.207 in \cite{book_knappvogan1995} for example.
\end{proof}

\subsection{Discretely decomposable modules}

Following Kobayashi \cite[Definition 1.1]{kobayashi1997}, we say that $X$ is {\em discretely decomposable} if $X$ is a union of finite length modules, and a discretely decomposable $X$ is {\em discretely decomposable with finite multiplicities} if all $m_Y(X)$ are finite.

By Proposition \ref{prop:kdecomp} an admissible $X$ is discretely decomposable with finite multiplicities as $(\liek,K)$-module. Then the multiplicity of any irreducible $(\lieg,K)$-module $Y$ in $X$ is finite. We remark however $X$ need not be discretely decomposable. The other way round we have the following criterion.

\begin{proposition}[{Kobayashi, \cite[Lemma 1.5]{kobayashi1997}}]
Let $(\lieh,L)\to(\lieg,K)$ be an inclusion of reductive pairs. For any irreducible $(\lieg,K)$-module $X$ the following are equivalent:
\begin{itemize}
\item[(i)] $X$ is a discretely decomposable $(\lieh,L)$-module.
\item[(ii)] It exists a finite length $(\lieh,L)$-module $Z$ and a non-zero $(\lieh,L)$-map $Z\to X$.
\item[(iii)] It exists an irreducible $(\lieh,L)$-module $Y$ and a non-zero $(\lieh,L)$-map $Y\to X$.
\end{itemize}
\end{proposition}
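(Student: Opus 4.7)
The implications (i) $\Rightarrow$ (ii) and (ii) $\Rightarrow$ (iii) are straightforward. For (i) $\Rightarrow$ (ii), any non-zero finite length $(\lieh,L)$-submodule of $X$ (which exists by the definition of discrete decomposability) together with its inclusion provides the desired pair. For (ii) $\Rightarrow$ (iii), the image of the given map is a finite length $(\lieh,L)$-submodule of $X$; by the descending chain condition on finite length modules, this image contains a minimal, hence irreducible, non-zero submodule $Y$, whose inclusion into $X$ is the required map.

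The main content is the implication (iii) $\Rightarrow$ (i). My plan is to introduce
$$
X^{\mathrm{dd}} \;:=\; \sum_{M} M \;\subseteq\; X,
$$
where $M$ ranges over all finite length $(\lieh,L)$-submodules of $X$. This is automatically an $(\lieh,L)$-submodule of $X$, and it is non-zero since the image of $\phi$ is irreducible, hence of finite length. The strategy is to show that $X^{\mathrm{dd}}$ is in fact a $(\lieg,K)$-submodule of $X$: the $(\lieg,K)$-irreducibility of $X$ then forces $X^{\mathrm{dd}}=X$, which is precisely the discrete decomposability of the restriction.

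The key step is to establish $\lieg$-stability of $X^{\mathrm{dd}}$. Given a finite length $(\lieh,L)$-submodule $M\subseteq X$ and $g\in\lieg$, I would consider the finite-dimensional subspace $F:=\CC\cdot 1+\lieg\subseteq U(\lieg)$, which is $\ad(\lieh)$-stable (since $[\lieh,\lieg]\subseteq\lieg$) and $L$-stable (via $\Ad$), hence a finite-dimensional $(\lieh,L)$-module in its own right. A direct computation using $h\cdot(f\cdot m)=[h,f]\cdot m+f\cdot (h\cdot m)$ shows that $F\cdot M$ is an $(\lieh,L)$-submodule of $X$ containing both $M$ and $g\cdot M$, and that the multiplication map
$$
F\otimes_\CC M\;\to\; F\cdot M,\qquad f\otimes m\;\mapsto\; f\cdot m,
$$
is a surjection of $(\lieh,L)$-modules with respect to the tensor product construction. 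The task then reduces to proving that $F\otimes_\CC M$ is of finite length. By Proposition \ref{prop:finitelength} this amounts to admissibility together with $Z(\lieh)$-finiteness. Admissibility is immediate from the finite-dimensionality of $F$ via the isomorphism $\Hom_L(\tau,F\otimes_\CC M)\cong\Hom_L(\tau\otimes F^\vee,M)$, which reduces to finitely many $L$-multiplicities in the admissible module $M$; $Z(\lieh)$-finiteness is the classical observation that $Z(\lieh)$ acts on $F\otimes_\CC M$ with finitely many generalized eigenvalues, obtained from those of $M$ by translation by the weights of $F$. Consequently $F\cdot M$ is of finite length and $g\cdot M\subseteq F\cdot M\subseteq X^{\mathrm{dd}}$.

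Finally, $L$-stability of $X^{\mathrm{dd}}$ is automatic (as $L$ normalizes $\lieh$ and acts on $(\lieh,L)$-submodules), and $K^0$-stability follows from $\lieg$-stability together with the exponentiation compatibility on the subalgebra $\liek_0\subseteq\lieg_0$. The main obstacle I anticipate lies in promoting this to full $K$-stability when $K$ is disconnected: elements $k\in K\setminus K^0$ need not normalize $\lieh$, so $k\cdot M$ is not in general an $(\lieh,L)$-submodule. The argument will then have to exploit the finite $K/K^0$-orbit structure on $X$ together with the $(\lieg,K)$-irreducibility of $X$, which forces the $(\lieg,K)$-submodule generated by the non-zero $(\lieg,K^0)$-submodule $X^{\mathrm{dd}}$ to be all of $X$, and to conclude that $X$ itself inherits the discrete decomposability from $X^{\mathrm{dd}}$.
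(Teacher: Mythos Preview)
The paper does not supply its own proof of this proposition; it is stated with attribution to Kobayashi and the text moves on immediately. Your argument is essentially Kobayashi's original one: the substance is the $\lieg$-stability of $X^{\mathrm{dd}}$, obtained by showing that $F\cdot M$ is of finite length whenever $M$ is and $F\subseteq U(\lieg)$ is a finite-dimensional $\Ad(\lieh,L)$-stable subspace. Your reduction of this to admissibility plus $Z(\lieh)$-finiteness via Proposition~\ref{prop:finitelength} is correct, though in the paper's framework you could simply invoke Proposition~\ref{prop:tensorstable} for $?=\mathrm{fl}$, which already records exactly this fact.

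Your passage from $\lieg$-stability to $K^0$-stability is also correct once spelled out: for $x\in X^{\mathrm{dd}}$ and $Y\in\liek_0$ the exponential series $\sum_n Y^n x/n!$ converges in the finite-dimensional space $\langle K\cdot x\rangle$, all partial sums lie in the subspace $X^{\mathrm{dd}}\cap\langle K\cdot x\rangle$, and the latter is closed there by finite-dimensionality. The last paragraph, however, does not finish the disconnected case. You rightly note that for $k\notin N_K(\lieh,L)$ the translate $k\cdot M$ is a finite-length module only for the conjugate pair $(\Ad(k)\lieh,kLk^{-1})$; the decomposition $X=\sum_{k\in K/K^0}k\cdot X^{\mathrm{dd}}$ therefore exhibits $X$ as a sum of pieces each discretely decomposable over a \emph{different} $K$-conjugate of $(\lieh,L)$, and the assertion that $X$ then ``inherits the discrete decomposability'' over $(\lieh,L)$ itself is not justified by what you have written. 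The argument is complete for connected $K$; for disconnected $K$ an additional step is required, and you should either supply it or, as the paper does, defer to Kobayashi's original treatment for the precise hypotheses under which the statement is proved.
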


\subsection{Categories of $(\lieg,K)$-modules}

In order to make our character theory work, we need essentially small full abelian subcategories of $\mathcal C(\lieg,K)$ (for the notion of an {\em abelian category} we refer to section \ref{sec:grothendieckgroups} below). The properties from the previous section all define certain abelian subcategories, some of which provide nice setups for our theory. For this purpose we write
${\mathcal C}_{\rm a}(\lieg,K)$ resp.\ 
${\mathcal C}_{\rm d}(\lieg,K)$ resp.\ 
${\mathcal C}_{\rm df}(\lieg,K)$ resp.\ 
${\mathcal C}_{\rm zf}(\lieg,K)$ resp.\ 
${\mathcal C}_{\rm fl}(\lieg,K)$ resp.\ 
${\mathcal C}_{\rm fd}(\lieg,K)$
for the categories of admissible resp.\ 
discretely decomposable resp.\ 
discretely decomposable with finite multiplicities resp.\ 
$Z(\lieg)$-finite resp.\ 
finite length resp.\ 
finite-dimensional $(\lieg,K)$-modules. They satisfy the inclusion relations
$$
{\mathcal C}(\lieg,K)\;\supset\;
{\mathcal C}_{\rm d}(\lieg,K)\;\supset\;
{\mathcal C}_{\rm df}(\lieg,K)\;\supset\;
{\mathcal C}_{\rm fl}(\lieg,K)\;\supset\;
{\mathcal C}_{\rm fd}(\lieg,K),
$$
$$
{\mathcal C}(\lieg,K)\;\supset\;
{\mathcal C}_{\rm zf}(\lieg,K)\;\supset\;
{\mathcal C}_{\rm fl}(\lieg,K)\;\supset\;
{\mathcal C}_{\rm fd}(\lieg,K),
$$
and
$$
{\mathcal C}(\lieg,K)\;\supset\;
{\mathcal C}_{\rm a}(\lieg,K)\;\supset\;
{\mathcal C}_{\rm fl}(\lieg,K)\;\supset\;
{\mathcal C}_{\rm fd}(\lieg,K).
$$
Proposition \ref{prop:finitelength} tells us that
$$
{\mathcal C}_{\rm zf}(\lieg,K)\;\cap\;
{\mathcal C}_{\rm a}(\lieg,K)\;=
{\mathcal C}_{\rm fl}(\lieg,K).
$$
The above categories are in general not closed under tensor products. For our purpose we have the important
\begin{proposition}\label{prop:tensorstable}
Fix $?\in\{\rm d,\rm df,\rm zf, \rm a,\rm fl,\rm fd\}$. For any $Z\in{\mathcal C}_{\rm fd}(\lieg,K)$ and any $X\in{\mathcal C}_{?}(\lieg,K)$ we have
$$
Z\otimes_\CC X\;\in\;{\mathcal C}_{?}(\lieg,K).
$$
\end{proposition}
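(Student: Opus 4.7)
My plan is to treat the six cases separately, reducing each to a manageable core argument. The first observation is that every category $\mathcal C_?(\lieg,K)$ in the list is closed under extensions. This lets me pick a finite composition series $0 = Z_0 \subsetneq Z_1 \subsetneq \cdots \subsetneq Z_n = Z$ of $Z$ as a $\lieg$-module (using that $\lieg$ is reductive and $Z$ is finite-dimensional), which induces a filtration of $Z \otimes X$ with successive quotients $(Z_i/Z_{i-1}) \otimes X$, and reduce to the case where $Z$ is irreducible as a finite-dimensional $\lieg$-module. The case $?=\mathrm{fd}$ is trivial since $\dim(Z \otimes X)=\dim Z\cdot \dim X$.

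For $?=\mathrm{a}$, I would invoke the tensor-Hom adjunction coming from \eqref{eq:tensorhom}: for any irreducible $K$-type $W$,
$$
\Hom_K(W, Z \otimes X)\;\cong\;\Hom_K(Z^\vee \otimes W, X).
$$
Since $Z^\vee \otimes W$ is a finite-dimensional $K$-module, it contains only finitely many $K$-types each with finite multiplicity, so admissibility of $X$ implies the right-hand side is finite-dimensional. For $?=\mathrm{zf}$ I would decompose $X$ into its (finite) direct sum of $Z(\lieg)$-primary components and invoke the classical fact (essentially Kostant, see \cite[Ch.\ VII]{book_knappvogan1995}) that the generalized infinitesimal characters appearing in $Z \otimes X_\chi$ lie in the finite set $\{\chi + \mu \mid \mu \in \Delta(\lieh, Z)\}$. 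The case $?=\mathrm{fl}$ then follows by combining the previous two with Proposition \ref{prop:finitelength}.

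For $?=\mathrm{d}$, writing $X = \bigcup_\alpha X_\alpha$ with each $X_\alpha$ of finite length yields $Z \otimes X = \bigcup_\alpha (Z \otimes X_\alpha)$, and each term is of finite length by the case $?=\mathrm{fl}$. Finally, for $?=\mathrm{df}$, discrete decomposability follows from the previous case; for finite multiplicities, fix an irreducible $Y$ and note that the only irreducible $Y'$ which can contribute to $m_Y(Z \otimes X)$ are composition factors of $Z^\vee \otimes Y$, a finite-length module, hence a finite set. Using the additivity \eqref{eq:multiplicityaddition} applied to the filtration by the $Z \otimes X_\alpha$, I would bound
$$
m_Y(Z \otimes X)\;\leq\;\sum_{Y'}\;[Y : Z \otimes Y'] \cdot m_{Y'}(X),
$$
which is a finite sum of finite terms.

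The main obstacle will be the $Z(\lieg)$-finiteness case, which relies on a nontrivial computation with the coproduct of $U(\lieg)$ — essentially Kostant's theorem on tensor products with finite-dimensional representations — and needs to be cited rather than derived here. A secondary subtlety is the multiplicity bound in the $\mathrm{df}$ case: care is required because $m_Y$ is defined via filtrations rather than $\Hom$-dimensions, so I would need the additivity formula \eqref{eq:multiplicityaddition} together with compatibility of the $Z \otimes -$ functor with the filtrations $X_\alpha \subseteq X_\beta$.
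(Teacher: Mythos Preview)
Your overall strategy matches the paper's: handle $\rm fd$ trivially, cite Kostant's result for $\rm zf$, combine $\rm a$ and $\rm zf$ via Proposition~\ref{prop:finitelength} for $\rm fl$, pass to direct limits for $\rm d$, and bound multiplicities for $\rm df$. Your adjunction argument for $?=\rm a$ is clean; the paper in fact skips this case entirely.

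There is, however, a genuine gap in your treatment of $?=\rm df$. The assertion that every irreducible $Y'$ contributing to $m_Y(Z\otimes X)$ must be a composition factor of $Z^\vee\otimes Y$ is false. Take $(\lieg,K)=(\liesl_2,\SO(2))$, $Z=F_1$ the two-dimensional representation, $Y'=D_1$ the limit of discrete series with minimal $K$-type $\alpha$, and $Y=F_0={\bf 1}$. A $K$-type count shows that $F_1\otimes D_1$ has $K$-types $0,2,2,4,4,\ldots$ and hence composition factors $F_0,D_2,D_2$; in particular $m_{F_0}(F_1\otimes D_1)=1>0$. But $Z^\vee\otimes Y=F_1\otimes{\bf 1}=F_1$ is irreducible and does not contain $D_1$. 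The adjunction you have in mind only produces a nonzero map when $Y$ is a submodule or a quotient of $Z\otimes Y'$, not a general subquotient. The fix is exactly what the paper does and what you have already set up in the $\rm zf$ case: by Kostant's result, if $m_Y(Z\otimes Y')\neq 0$ then the infinitesimal character of $Y'$ differs from that of $Y$ by a weight of $Z$, hence lies in a finite set; Theorem~\ref{thm:harishchandrafinite} then bounds the number of such $Y'$. With this correction your multiplicity inequality and the rest of the argument go through unchanged.
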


\begin{proof}
We only sketch a proof for $?={\rm df}$ and otherwise give references to the literature. The case $?=\rm fd$ is clear. The crucial case is $?={\rm zf}$, as it allows one to invoke Proposition \ref{prop:finitelength}. A treatment of this case may be found in \cite[Proposition 7.203]{book_knappvogan1995}.

The case $?=\rm fl$ is due to Kostant \cite{kostant1975}. To prove it one can deduce it from $?={\rm zf}$ using Proposition \ref{prop:finitelength}, and \cite[Corollary 4.5.6]{book_vogan1981} also contains a proof. The case $?=\rm d$ follows from the fact that the endofunctor $Z\otimes_\CC -$ of ${\mathcal C}(\lieg,K)$ commutes with direct limits, and was first observed by Kobayashi \cite[Lemma 1.4]{kobayashi1997}.

We assume for a moment that $K$ is connected. Then the case $?={\rm df}$ follows from Kostant's observation that tensoring with $Z$ \lq{}shifts\rq{} infinitesimal characters only in a finite manner, namely by weights occuring in $Z$, cf.\ loc.\ cit.\ and also \cite[Theorem 7.133]{book_knappvogan1995}, \cite[Lemma 4.5.4 and Corollary 4.5.6]{book_vogan1981}. In particular, if $Y$ is an irreducible $(\lieg,K)$-module, then there are (up to isomorphy) only finitely many irreducible $Y'$ with $m_Y(Z\otimes_\CC Y')\neq 0$ due to Harish-Chandra's Theorem \ref{thm:harishchandrafinite}.

Consequently, if we write
$$
X=\bigcup\limits_{i\in I}X_i
$$
with each $X_i$, $i \in I$ of finite length, then as the multiplicity of each $Y'$ in $X$ is finite, there are only finitely many $i \in I$ with $m_Y(Z\otimes X_i)\neq 0$, which reduces us to the case $?={\rm fl}$.

The case of non-connected $K$ then follows from the observation that each finite-length $(\lieg,K)$-module $E$ is a finite length $(\lieg,K^0)$-module, and if all multiplicities for $(\lieg,K)$ are finite, the same is true in the connected case, and vice versa.
\end{proof}

\section{$K$-groups and their localizations}

\subsection{Grothendieck groups}\label{sec:grothendieckgroups}

Consider an {\em abelian category} $\mathcal A$, i.e.\ a category in which all morphism-sets are abelian groups in which the composition of morphisms is $\ZZ$-bilinear, kernels and cokernels exist (i.e.\ $\mathcal A$ is {\em additive}), and every monomorphism is a kernel and every epimorphism is a cokernel. Evidently all the categories ${\mathcal C}_?(\lieg,K)$ are abelian.

In an abelian category we have the notion of {\em exact sequence}: A sequence of morphisms
\begin{equation}
\begin{CD}
X@>\alpha>> Y@>\beta>> Z
\end{CD}
\label{eq:xyz}
\end{equation}
is said to be a {\em exact at $Y$} if the kernel of $\beta$ agrees with the image of $\alpha$. The notion of image has an abstract definition in an abelian category: it is the kernel of the cokernel. A {\em short exact sequence} is a sequence exact at $Y$ as above where additionally $\alpha$ is a monomorphism and $\beta$ is an epimorphism.

We call $\mathcal A$ {\em essentially small} if it is equivalent to a category whose class of objects is a set.

For an essentially small abelian category $\mathcal A$ we may consider the set $A$ of isomorphism classes of objects in $\mathcal A$. Then in the free abelian group $\ZZ[A]$ over $A$ the collection of elements
$$
Y-X-Z\;\in\;\ZZ[A]
$$
which occur in a short exact sequence \eqref{eq:xyz} generate a subgroup $R\leq\ZZ[A]$, and we define the {\em Grothendieck group} of $\mathcal A$ as the abelian group
$$
K(\mathcal A)\;:=\;\ZZ[A]/R.
$$
It comes with a natural map
$$
[\cdot]:\;\;\;\mathcal A\;\to\;K(\mathcal A),
$$
which is induced by sending an object $X$ first to its isomorphism class in $A$, then to its canonical image in $\ZZ[A]$, and then project it modulo $R$.

The map $[\cdot]$ is easily seen to be surjective and to satisfy the following universal property: For any abelian group $B$ and any {\em additive} map $f:\mathcal A\to B$, i.e.\ a map for which
$$
f(X)+f(Z)\;=\;f(Y)
$$
for any short exact sequence \eqref{eq:xyz}, there is a unique homomorphism $f':K(\mathcal A)\to B$ with $f=f'\circ [\cdot]$.

Due to this universal property, all characters considered in section \ref{sec:whatisacharacter} factor over (i.e.\ extend to) the corresponding Grothendieck groups.

\subsection{Grothendieck groups of $(\lieg,K)$-modules}

Unfortunately the categories ${\mathcal C}_?(\lieg,K)$ for $?\in\{-,\rm zf,\rm d\}$ are {\em not} essentially small, so we cannot consider their Grothendieck groups directly.

As there are only countably many isomorphism classes of irreducible finite-dimensional representations of $(\lieg,K)$ for semisimple $\lieg$, and $\#\CC$ many for a non-compact torus, we see that for an arbitrary reductive pair $(\lieg,K)$ the category ${\mathcal C}_{\rm fd}(\lieg,K)$ is essentially small, and $K({\mathcal C}_{\rm fd}(\lieg,K))$ is the free abelian group generated by the isomorphism classes of irreducible finite-dimensional modules.

There are at most countably many irreducible representations of $K$ up to isomorphy, and therefore ${\mathcal C}_{\rm a}(\lieg,K)$ is essentially small. In this case there is no simple characterization of its Grothendieck group. However it comes with a group homomorphism
$$
{\rm res}_{\liek,K}^{\lieg,K}:\;\;\;
K({\mathcal C}_{\rm a}(\lieg,K))\;\to\; K({\mathcal C}_{\rm a}(\liek,K))
$$
induced by restriction. If $K$ is infinite, the group on the right hand side is non-canonically isomorphic to the additive group of the ring of formal power series
$
\ZZ[[T]]
$
in one indeterminate, by identifying each power of $T$ with an isomorphism class of an irreducible $K$-module.

Theorem \ref{thm:harishchandrafinite} tells us that in the remaining cases $?\in\{\rm df,\rm fl\}$ the corresponding category again is essentially small. If we write $A_0\subseteq A$ for the set of isomorphism classes of irreducibles, then as in the previous example we have a canonical isomorphism
$$
K({\mathcal C}_{\rm df}(\lieg,K))\;\cong\; \ZZ^{A_0},
$$
where the right hand side denotes the abelian group of all maps $A_0\to\ZZ$. The inclusion
$$
K({\mathcal C}_{\rm fl}(\lieg,K))\;\to\;
K({\mathcal C}_{\rm df}(\lieg,K))
$$
induces a canonical isomorphism of abelian groups
$$
K({\mathcal C}_{\rm fl}(\lieg,K))\;\cong\; \ZZ^{(A_0)},
$$
where this time the right hand side denotes the abelian group of maps $A_0\to\ZZ$ with {\em finite support}, i.e.\ that are non-zero only for finitely many arguments.

For readability we introduce the abbreviations
$$
K_?(\lieg,K)\;:=\;K({\mathcal C}_?(\lieg,K)).
$$

\subsection{The multiplicative structure}

The tensor product in ${\mathcal C}(\lieg,K)$ is exact, and consequently it descends to Grothendieck groups whenever it preserves the underlying essentially small category. However this is difficult to guarantee in general in most cases.

The situation is considerably simpler if we consider finite-dimensional modules. The category ${\mathcal C}_{\rm fd}(\lieg,K)$ is closed under tensor products, and therefore we get an induced multiplication map
$$
\cdot:\;\;\;K_{\rm fd}(\lieg,K)\times K_{\rm fd}(\lieg,K)\;\to\;K_{\rm fd}(\lieg,K),
$$
$$
([X],[Y])\;\mapsto\;[X\otimes_\CC Y].
$$
As the tensor product is associative, behaves well in exact sequences, and in particular is distributive with respect to direct sums, this turns $K_{\rm fd}(\lieg,K)$ into a commutative ring with $1$, the latter being the class of the trivial representation.

\begin{proposition}\label{prop:connecteddomain}
For connected $K$ the ring $K_{\rm fd}(\lieg,K)$ is an integral domain.
\end{proposition}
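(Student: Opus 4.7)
The plan is to embed $K_{\rm fd}(\lieg,K)$ as a subring of a group ring $\ZZ[\lieh^*]$, where $\lieh\subseteq\lieg$ is a Cartan subalgebra, and observe that this target, being the group ring of a torsion-free abelian group over $\ZZ$, is an integral domain.

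Fix a Cartan subalgebra $\lieh\subseteq\lieg$ and a Borel $\lieh\subseteq\lieb=\lieh+\lieu\subseteq\lieg$. Every irreducible finite-dimensional $(\lieg,K)$-module is semisimple as an $\lieh$-module: its center acts by scalars by Schur's lemma, while the Cartan of the derived subalgebra acts semisimply by Weyl's theorem. For a general finite-dimensional $V$ one defines the formal character through a Jordan--H\"older filtration, and verifies that
\[
\chi\colon K_{\rm fd}(\lieg,K)\;\longrightarrow\;\ZZ[\lieh^*],\qquad [V]\;\mapsto\;\sum_{\mu\in\lieh^*}m_\mu(V)\cdot e^\mu,
\]
where $m_\mu(V)$ denotes the weight-$\mu$ multiplicity in the semisimplification of $V$, is additive on short exact sequences and multiplicative via the identity $(V\otimes W)_\nu=\bigoplus_{\mu+\mu'=\nu}V_\mu\otimes W_{\mu'}$. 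Hence $\chi$ is a ring homomorphism into $\ZZ[\lieh^*]$, which is a domain since $(\lieh^*,+)$ is torsion-free.

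It remains to verify injectivity of $\chi$. Connectedness of $K$ enters here: the diffeomorphism \eqref{eq:Gexp} shows $G$ is connected, so any $\lieg$-submodule of a finite-dimensional $(\lieg,K)$-module is automatically $\liek_0$-stable, hence $K$-stable. Consequently the forgetful functor $\mathcal{C}_{\rm fd}(\lieg,K)\to\mathcal{C}_{\rm fd}(\lieg)$ is fully faithful and preserves irreducibility, so each simple $(\lieg,K)$-module is classified by its highest weight $\lambda\in\lieh^*$ relative to $\lieu$, and its character takes the form
\[
\chi([V_\lambda])\;=\;e^\lambda\;+\;\sum_{\mu<\lambda}a_{\lambda,\mu}\,e^\mu,
\]
with $\mu<\lambda$ meaning $\lambda-\mu$ is a non-trivial non-negative integral combination of positive roots. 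Choosing a total order on $\lieh^*$ that refines this dominance order and is compatible with addition, the leading monomials $e^\lambda$ of distinct $\chi([V_\lambda])$ are pairwise distinct, and a standard leading-term argument forces any $\ZZ$-linear relation among the $\chi([V_\lambda])$ to be trivial. Thus $\chi$ is injective and $K_{\rm fd}(\lieg,K)$ embeds into an integral domain.

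The main obstacle is precisely the injectivity of $\chi$, which rests on the highest-weight classification of irreducibles in $\mathcal{C}_{\rm fd}(\lieg,K)$. Connectedness of $K$ cannot be dropped: without it, distinct irreducible $(\lieg,K)$-modules may share the same underlying $\lieg$-module and differ only in the action of $\pi_0(K)$, so that $\chi$ would identify them and the leading-term argument would collapse.
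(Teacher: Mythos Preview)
Your proof is correct and follows essentially the same route as the paper: both embed $K_{\rm fd}(\lieg,K)$ into the group ring $\ZZ[\lieh^*]$ of a Cartan subalgebra via the formal character (equivalently, the forgetful functor $\mathcal F$ of \eqref{eq:cartanf}) and use that the group ring of a torsion-free abelian group is a domain. The paper simply invokes Cartan's highest-weight classification from Section~\ref{sec:whatisacharacter} for injectivity, whereas you spell out the leading-term argument and make explicit where connectedness of $K$ enters.
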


\begin{proof}
We already know that the tensor product of non-zero modules is non-zero. We have to show that the same remains true when allowing formal differences of modules. For this purpose we make use of the forgetful functor $\mathcal F$ from \eqref{eq:cartanf} from the first section which was associated to a Cartan subalgebra $\liel\subseteq\lieg$. Then we may consider a corresponding Grothendieck group $K_{\rm fd}(\liel)$ for the category of finite-dimensional $\liel$-modules. The forgetful functor $\mathcal F$ induces a ring morphism
$$
\mathcal F:\;\;\;K_{\rm fd}(\lieg,K)\;\to\; K_{\rm fd}(\liel),
$$
sending $1$ to $1$. As already discussed in the first section, E.\ Cartan's result implies that this map is injective, and consequently we are reduced to the case of $\liel$, which is an abelian Lie algebra. Here all irreducibles are one-dimensional and correspond to characters $\lambda\in\liel^*$. The tensor product of two characters is its sum. Therefore we have a canonical isomorphism
$$
K_{\rm fd}(\liel)\;\cong\; \ZZ[\liel^*]
$$
of rings, where the right hand side denotes the group algebra of $\liel^*$. The latter is obviously an integral domain, hence the claim follows.
\end{proof}

We remark that for non-connected $K$ the statement of Proposition \ref{prop:connecteddomain} is no more true. Consider the group $K=\{\pm 1\}$. The classes of irreducible representations are represented by ${\bf 1}$ and ${\rm sgn}$. In the Grotendieckgroup of finite-dimensional $K$-representations we have
$$
([{\bf 1}]+[{\rm sgn}])\cdot([{\bf 1}]-[{\rm sgn}])\;=\;
[{\bf 1}\otimes {\bf 1}]-[{\rm sgn}\otimes{\rm sgn}])\;=\;
[{\bf 1}]-[{\bf 1}]\;=\;0.
$$


If we write $F_0$ for the set of isomorphism classes of finite-dimensional $(\lieg,K)$-modules, the canonical isomorphism
$$
\iota:\;\;\;K_{\rm fd}(\lieg,K)\;\cong\; \ZZ^{(F_0)}
$$
does not respect multiplication if we define the product on the right hand side component-wise. The trick is that the tensor product induces a pairing
$$
\cdot:\;\;\;F_0\times F_0\;\to\; \ZZ^{(F_0)},
$$
which in turn induces a unique $\ZZ$-bilinear map
$$
\cdot:\;\;\;\ZZ^{(F_0)}\times\ZZ^{(F_0)}\;\to\;\ZZ^{(F_0)},
$$
which turns $\ZZ^{(F_0)}$ into a ring, and $\iota$ into a natural isomorphism of rings.

Unless $F_0$ is finite we cannot extend this product to $\ZZ^{F_0}$, as infinitely many pairs of elements of $F_0$ may contribute to the coefficient of a single one. Therefore there is no hope for larger categories to be stable under the tensor product. This already applies to ${\mathcal C}_{\rm a}(\liek,K)$.

All we can hope for is to consider the Grothendieck groups of larger categories as modules over $K_{\rm fd}(\lieg,K)$. This indeed works out in our previous examples thanks to Proposition \ref{prop:tensorstable}: For $?\in\{\rm a,\rm df,\rm fl,\rm fd\}$ the tensor product induces on $K_?(\lieg,K)$ a $K_{\rm fd}(\lieg,K)$-module structure.

\subsection{Localization of Grothendieck groups}

Assume we are given an element
$$
0\neq W\in K_{\rm fd}(\lieg,K).
$$
For connected $K$, the ring $K_{\rm fd}(\lieg,K)$ is a domain by Proposition \ref{prop:connecteddomain}, and its quotient field $Q_{\rm fd}(\lieg,K)$ comes with a monomorphism
$$
i:\;\;\;K_{\rm fd}(\lieg,K)\;\to\; Q_{\rm fd}(\lieg,K),
$$
and $i(W)$ becomes invertible. Therefore we may define the ring
$$
K_{\rm fd}(\lieg,K)[W^{-1}]\;\subseteq\;Q_{\rm fd}(\lieg,K),
$$
as the subring of the quotient field generated by the image of $i$ and $W^{-1}$.

For non-connected $K$ we define $K_{\rm fd}(\lieg,K)[W^{-1}]$ formally as the localization at $W$, i.e.\ as a set it is given by equivalence classes of pairs $(a,b)$, where $a\in K_{\rm fd}(\lieg,K)$ and $b=W^k$ for some $k\geq 0$, and we say that
$$
(a,b)\;\sim\;(a',b')\;\;\;:\Longleftrightarrow\;\;\;a\cdot b'\;=\;a'\cdot b.
$$
Then the set of equivalence classes with respect to this equivalence relation defines the desired localized ring, which comes with a natural map
$$
K_{\rm fd}(\lieg,K)\;\to\;K_{\rm fd}(\lieg,K)[W^{-1}],\;\;\;
a\;\mapsto\;[(a,1)]_\sim,
$$
which satiesfies the usual universal property.

For a $K_{\rm fd}(\lieg,K)$-module $M$ we define its localization at $W$ as
$$
M[W^{-1}]\;:=\;K_{\rm fd}(\lieg,K)[W^{-1}]\otimes_{K_{\rm fd}(\lieg,K)}M.
$$
Localization is a non-faithful operation, as it kills all $W$-torsion. More precisely we have
\begin{proposition}\label{prop:lockernel}
Let $M_W\subseteq M$ be the sum of the kernels of the endormorphisms $W^k:M\to M$ given by multiplication with $W^k$, $k\geq 1$. Then the sequence
$$
\begin{CD}
0@>>> M_W@>>> M@>>> M[W^{-1}]
\end{CD}
$$
is exact.
\end{proposition}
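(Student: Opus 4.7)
The plan is to identify the localization $M[W^{-1}]$ with the direct limit of the sequence
$$
M \xrightarrow{W} M \xrightarrow{W} M \xrightarrow{W} \cdots,
$$
after which the statement reduces to a standard description of when an element becomes zero in a filtered colimit. Writing $A := K_{\rm fd}(\lieg,K)$ for brevity, I would first argue that $A[W^{-1}]$ is naturally the colimit of $A \xrightarrow{W} A \xrightarrow{W} \cdots$ in the category of $A$-modules, with $W^{-k}$ corresponding to the class of $1$ in the $k$-th copy of $A$. Since $- \otimes_A M$ commutes with colimits, $M[W^{-1}] = A[W^{-1}] \otimes_A M$ is then the colimit of $M \xrightarrow{W} M \xrightarrow{W} \cdots$, and the canonical map $M \to M[W^{-1}]$ is precisely the structure map into the initial copy.

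For the inclusion $M_W \subseteq \ker(M \to M[W^{-1}])$, I would observe that the inclusions $\ker(W^k) \subseteq \ker(W^{k+1})$ make $M_W$ the directed union $\bigcup_{k \geq 1} \ker(W^k)$. If $W^k m = 0$ for some $k \geq 1$, then in $M[W^{-1}]$ we have $1 \otimes m = W^{-k} \otimes W^k m = 0$, so $m$ maps to zero. For the reverse inclusion, I would invoke the standard criterion in filtered colimits of modules: an element $m$ in the initial copy of $M$ maps to zero in the colimit of $M \xrightarrow{W} M \xrightarrow{W} \cdots$ if and only if its image under some iterate $W^k$ is already zero, i.e.\ $W^k m = 0$ for some $k \geq 0$. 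This places $m$ in $M_W$, giving exactness.

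The only real obstacle is conceptual rather than computational: one must be careful that the paper's informal description of $A[W^{-1}]$ in the non-domain case (as pairs $(a,W^k)$ modulo $(a,b)\sim(a',b') \Leftrightarrow ab'=a'b$) actually agrees with the categorical localization, since the stated relation is not literally transitive when $W$ is a zero-divisor, and the correct relation is $W^n(ab' - a'b) = 0$ for some $n \geq 0$. Granting the correct definition of the localized ring (equivalently, using the colimit or tensor-product description), the colimit identification of $M[W^{-1}]$ is routine and the proposition follows.
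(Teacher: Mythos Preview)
Your proof is correct and amounts to the same argument as the paper's, which works directly with the relation $m\sim 0 \Leftrightarrow \exists k\geq 1:\;W^k m=0$ on the image of $M$ in $M[W^{-1}]$ --- this is exactly your filtered-colimit criterion unwound. Your side remark is apt: the displayed equivalence $(a,b)\sim(a',b')\Leftrightarrow ab'=a'b$ is not transitive when $W$ is a zero-divisor, but the paper's proof itself silently uses the correct version with the extra power of $W$.
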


\begin{proof}
We may identify the elements of the image of $M$ in $M[W^{-1}]$ with equivalence classes of elements of $M$ with respect to the equivalence relation
$$
a\sim b\;\;\;:\Longleftrightarrow\;\;\;\exists k\geq 1:\;W^k\cdot a=W^k\cdot b.
$$
On the one hand, by definition, for any $m\in M_W$ there is a $k\geq 1$ with $W^k\cdot m=0$, which implies $W^k\cdot m=W^k\cdot 0$, hence $m\sim 0$, and $M_W$ maps to $0$ in $M[W^{-1}]$.

On the other hand, if $m\in M$ maps to $0$ in the localization, this means that there is a $k\geq 1$ with $W^k\cdot m=W^k\cdot 0=0$, hence $m\in M_W$.
\end{proof}

\section{Parabolic pairs and cohomology}

If $\lieq=\liel+\lieu$ is a parabolic subalgebra of $\lieg$, the functor
$$
H^0(\lieu;-):\;\;\;X\mapsto X^\lieu
$$
is very useful in the study of finite-dimensional representations. It maps $\lieg$-modules to $\liel$-modules, and the latter are usually easier understood, and therefore help in the study of the former.

In order to make this construction work for $(\lieg,K)$-modules, we need to make sure that $\liel$ is part of a reductive pair, which we want to correspond to a closed reductive subgroup $L$ of $G$, if the reductive Lie group $G$ corresponds to $(\lieg,K)$. Therefore we need to restrict our attention to {\em germane} parabolic subalgebras, which are parabolics giving rise to parabolic pairs. The latter have reductive pairs as Levi factors.

In this context $H^0(\lieu;-)$ becomes a left exact functor ${\mathcal C}(\lieg,K)\to{\mathcal C}(\liel,L\cap K)$, and its right derived functors give us for each $(\lieg,K)$-module $X$ a collection of $(\liel,L\cap K)$-modules $H^q(\lieu;X)$, $0\leq q\leq\dim\lieu$. There is a standard complex computing this cohomology. This explicit description of the cohomology will enable us later to prove fundamental properties of our algebraic characters.

For our applications to character theory we will introduce the notion of {\em constructible parabolic pairs}. Those are iteratively constructed out of real and $\theta$-stable ones and in this case $H^q(\lieu;-)$ preserves many useful properties, in particular finite length.


\subsection{Parabolic pairs}

In this section we set up the essential formalism of parabolic pairs and their Levi decompositions. For details the reader may consult \cite[Chapter IV, Section 6]{book_knappvogan1995}.

Again $(\lieg,K)$ is a reductive pair. Let $\lieq\subseteq\lieg$ be a parabolic subalgebra, and $G$ the corresponding reductive Lie group for the pair $(\lieg,K)$. We say that $\lieq$ is {\em germane}, if it possess a Levi factor $\liel$ which is the complexification of a real $\theta$-stable subalgebra $\liel_0\subseteq\lieg$. This is equivalent to saying that $\liel$ is the complexified Lie algebra of a $\theta$-stable closed subgroup of $G$. A canonical choice for $L$ is given by the intersection of the normalizers of $\lieq$ and $\theta(\lieq)$ in $G$. Then $(\liel,L\cap K)$ is again a reductive pair, corresponding to the reductive Lie group $L$, whose Lie algebra is $\liel$, and may be thought of as the Levi factor, that we call the {\em Levi pair} of the {\em parabolic pair} $(\lieq,L\cap K)$.

In the sequel we always assume our Levi factors and Cartan subalgebras to be $\theta$-stable, and parabolic subalgebras to be germane. If $\lieq=\liel+\lieu$ is a Levi-decomposition, we always assume $\liel$ to be $\theta$-stable. Then $L$ and a fortiori $L\cap K$ normalize $\lieu$, $\lieu^-$, $\theta(\lieu)$, $\theta(\lieu^-)$.

Take for example the reductive pair $(\liegl_n,\Oo(n))$. Then the subalgebra $\lieb\subseteq\liegl_n$ consisting of all upper triangular matrices is a germane parabolic subalgebra, because it has a Levi factor consisting of all diagonal matrices and those are evidently defined over $\RR$. In this case it is even true that all of $\lieb$ is defined over $\RR$, as it is the complexification of the algebra of real upper triangular matrices. Therefore $L=\GL_1(\RR)^n$ here, and $L\cap\Oo(n)=\{\pm1\}^n$.

There is another important case. Start with a maximal torus $T\subseteq\Oo(n)$, say with Lie algebra consisting of matrices of the shape
$$
\begin{pmatrix}
 0&t_1&  & \\
-t_1&0&  & \\
  & & 0&t_2 & \\
  & &-t_2&0 & \\
  & &  &  &\ddots\\
\end{pmatrix}.
$$
Then we may choose a complementary maximal abelian subspace in the space $\liep$ of symmetric matrices. Explictly we choose the matrices of the form
$$
\begin{pmatrix}
a_1& 0&  & \\
0  &a_1&  & \\
  & & a_2&0 & \\
  & & 0&a_2 & \\
  & &  &  &\ddots\\
\end{pmatrix},
$$
where we have a single entry $a_{\frac{n+1}{2}}$ at position $n\times n$ if $n$ is odd. Then together this gives us a Cartan subalgebra $\liel$ of $\liegl_n$ whose corresponding Lie group $L$ is isomorphic to $\GL_1(\CC)^{\frac{n-\delta}{2}}\times\GL_1(\RR)^\delta$, where $\delta=0$ if $n$ is even and $\delta=1$ if $n$ is odd. Therefore $L\cap\Oo(n)\cong U(1)^{\frac{n-1}{2}}\times\{\pm1\}^\delta$.

Then there is a germane parabolic subalgebra $\lieq\subseteq\liegl_n$ with Levi factor $\liel$. However $\lieq$ is not defined over $\RR$. But contrary to $\lieb$ above it turns out to be $\theta$-stable. $\lieq$ is of particular importance, because it contains a Borel subalgebra of $\lieso_n$, and therefore turns out to be very useful in the study of restrictions from $\GL_n(\RR)$ to $\Oo(n)$.

We see that although a germane parabolic subalgebra $\lieq$ has a Levi factor which corresponds to a subgroup $L\subseteq G$, the nilpotent radical $\lien$ of $\lieq$ need not be defined over $\RR$, i.e.\ in general there is no subgroup $N\subseteq G$ giving rise to $\lien$. Conceptually there are two extreme cases. We say that $\lieq$ is {\em real} if $\lieq$ is the complexification of $\lieq_0:=\lieq\cap\lieg_0$. We say that $\lieq$ is {\em $\theta$-stable} if $\theta(\lieq)=\lieq$.

\begin{proposition}\label{prop:parabolic}
Let $\lieh\subseteq\lieg$ be $\theta$-stable Cartan subalgebra. The we find a germane Borel subalgebra $\lieq$ containing $\lieh$ with the property that there is a $\theta$-stable parabolic $\lieq'\subseteq\lieg$ with Levi decomposition $\lieq'=\liel'+\lieu'$ and a real parabolic subalgebra $\lieq''\subseteq\liel'$ with Levi decomposition $\lieq''=\lieh+\lieu''$ such that
$$
\lieq=\lieh+(\lieu''+\lieu')
$$
is a Levi decomposition of $\lieq$.
\end{proposition}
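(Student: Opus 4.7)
The plan is to build $\lieq$ out of a $\theta$-stable parabolic of $\lieg$ and a real parabolic of its Levi factor. Let $\lieh_0 = \liet_0 \oplus \liea_0$ be the decomposition into the $(+1)$- and $(-1)$-eigenspaces of $\theta$, and $\Delta = \Delta(\lieg,\lieh)$ the root system. Call $\alpha \in \Delta$ \emph{real} if $\theta\alpha = -\alpha$, equivalently $\alpha|_\liet = 0$, and write $\Delta_\RR$ for the subsystem of real roots.

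First I would construct the $\theta$-stable parabolic. Pick $X \in i\liet_0$ regular in the weak sense that $\alpha(X) \neq 0$ for every $\alpha \in \Delta \setminus \Delta_\RR$; such $X$ exist because $\Delta \setminus \Delta_\RR$ cuts out only finitely many proper hyperplanes of $i\liet_0$. Set
$$
\lieu' := \sum_{\alpha(X) > 0} \lieg_\alpha, \qquad \liel' := Z_\lieg(X) = \lieh + \sum_{\alpha \in \Delta_\RR} \lieg_\alpha, \qquad \lieq' := \liel' + \lieu'.
$$
Because $X \in \liek$ satisfies $\theta(X) = X$, every $\ad(X)$-eigenspace is $\theta$-stable, so $\lieq'$ is $\theta$-stable, and $(\liel', L' \cap K)$ is the corresponding Levi pair. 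Next, inside $\liel'$, choose any positive system $\Delta_\RR^+$ of $\Delta_\RR$ and define $\lieu'' := \sum_{\alpha \in \Delta_\RR^+} \lieg_\alpha$ and $\lieq'' := \lieh + \lieu''$. I would verify that $\lieq''$ is real by checking that the complex conjugation $\sigma$ of $\lieg$ relative to $\lieg_0$ fixes every real root: for $\alpha \in \Delta_\RR$ one has $\alpha|_{\liet_0} = 0$ and $\alpha|_{\liea_0} \subset \RR$, whence $\sigma\alpha = \alpha$ and therefore $\sigma(\lieg_\alpha) = \lieg_\alpha$. Combined with $\sigma(\lieh) = \lieh$, this gives $\sigma(\lieq'') = \lieq''$, which for a subspace of $\lieg$ is equivalent to being defined over $\RR$.

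Finally, set $\lieq := \lieq'' + \lieu' = \lieh + \lieu'' + \lieu'$. The subset
$$
P := \{\alpha \in \Delta : \alpha(X) > 0\} \cup \Delta_\RR^+
$$
is a positive system in $\Delta$: every root falls into exactly one of the three disjoint buckets $\alpha(X) > 0$, $\alpha(X) < 0$, $\alpha \in \Delta_\RR$, and additive closure follows case-by-case from the fact that $\alpha(X) \geq 0$ is preserved under sums and that $\Delta_\RR^+$ is a positive system in $\Delta_\RR$. Hence $\lieq$ is the Borel subalgebra attached to $P$, its Levi is the Cartan $\lieh$, and germanity follows because $\lieh$ is the complexification of the $\theta$-stable $\lieh_0$. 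The displayed identity $\lieq = \lieh + (\lieu'' + \lieu')$ is then the required Levi decomposition, the sum being direct because $\lieu''$ uses only roots with $\alpha(X) = 0$ while $\lieu'$ uses only roots with $\alpha(X) > 0$. The main technical point is the reality of $\lieu''$, which as above reduces to the elementary check that $\sigma$ fixes real roots; everything else is bookkeeping with root-space decompositions.
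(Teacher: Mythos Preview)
Your proof is correct and follows essentially the same strategy as the paper's: build the $\theta$-stable parabolic $\lieq'$ from an element (or elements) of $i\liet_0$, then take a Borel inside its Levi factor and verify it is real. The execution differs slightly. The paper chooses several $h_1,\dots,h_r\in i\liet_0$ so that $\dim\lieu_{(h_1,\dots,h_r)}$ is maximal, and then proves reality of $\lieq''$ by contradiction, using that a putative non-real eigenvalue on $\lieu''$ would produce a $t_0\in i\liet_0$ enlarging $\lieu'$ and violating maximality. You instead pick a single generic $X\in i\liet_0$ with $\alpha(X)\neq 0$ for all non-real roots, which immediately forces $\liel'=\lieh+\sum_{\alpha\in\Delta_\RR}\lieg_\alpha$; reality of $\lieq''$ then follows directly from the fact that complex conjugation fixes each real root and hence each $\lieg_\alpha$ for $\alpha\in\Delta_\RR$. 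Your route is a little more explicit and avoids the contradiction step, at the cost of invoking the root-space decomposition and the action of $\sigma$ on roots; the paper's argument stays closer to eigenvalue language throughout. Both arrive at the same $\lieq'$ (namely the one with Levi the centralizer of $i\liet_0$), so the difference is purely stylistic.
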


\begin{proof}
We write $\lieh_0=\liet_0+\liea_0$ where $\liet_0=\lieh_0\cap\liek$ and $\liea_0$ is the vector part respectively. Now for any finite number of elements $h_1,\dots,h_r\in i\liet_0$ all eigenvalues of $\ad(h_i)$ are real and we define $\lieu_{(h_1,\dots,h_r)}$ as the sum of the simultaneous eigenspaces of $\ad(h_i)$ in $\lieg$ for simultaneous positive eigenvalues, where positivity is defined lexicographically. Then we find $h_1,\dots,h_r$ which satisfy the following maximality condition:
\begin{itemize}
\item[(m)]
$$
\dim\lieu_{(h_1,\dots,h_r)}
$$
is maximal among all choices of elements $h_1,\dots,h_r$.
\end{itemize}
Then (m) is equivalent to
$$
\liel'\;:=\;Z_\lieg(\CC h_1+\cdots+\CC h_r)\;=\;\bigcap_{i=1}^r\ker\ad(h_i).
$$
This guarantees that with $\lieu':=\lieu_{(h_1,\dots,h_r)}$ we obtain a $\theta$-stable germane parabolic subalgebra $\lieq':=\liel'+\lieu'$.

Now choose a Borel subalgebra $\lieq''\subseteq\liel'$ with Levi decomposition $\lieq''=\lieh+\lieu''$. We claim that $\lieq''$ is real. This is the same to say that for any $h'\in \liet_0+\liea_0$, its adjoint action on $\lieu''$ has only real eigenvalues. So assume that this is not the case.

We have the decomposition $h'=it_0+a_0$ with $t_0\in i\liet_0$ and $a_0\in\liea_0$. By our assumption $t_0\neq 0$ and $\ad(t_0)$ has a non-zero real eigenvalue as an endomorphism of $\lieu''$, that we may assume to be positive. We write $E_0\subseteq\lieu''$ for the corresponding eigenspace. As $E_0$ lies in the kernel of all $\ad(h_i)$ we have
$$
E_0\;\subseteq\;\lieu_{(h_1,\dots,h_r,t_0)}.
$$
Therefore $\lieu_{(h_1,\dots,h_r,t_0)}$ strictly contains $\lieu'$. This is a contradiction to (m), concluding the proof.
\end{proof}

Proposition \ref{prop:parabolic} tells us that all germane Cartan subalgebras are Levi factors of Borel subalgebras that are constructed out of real and $\theta$-stable ones. As this class is of particular importance to us, we introduce the following terminology.

Let $\lieq\subseteq\lieg$ be any germane parabolic subalgebra. We say that $\lieq$ is {\em constructible} if there is a chain of germane parabolic subalgebras
$$
\lieq=\lieq_r\;\subseteq\;\lieq_{r-1}\;\subseteq\;\cdots\;
\subseteq\;\lieq_1\;\subseteq\;\lieq_0=\lieg
$$
with Levi decompositions
$$
\lieq_i\;=\;\liel_i+\lieu_i
$$
with Levi pairs $(\liel_i,L_i\cap K)$ such that for all $0\leq i<r$
$$
\liel_i\cap \lieq_{i+1}\;\subseteq\;\liel_i
$$
is a real or $\theta$-stable parabolic in $(\liel_i,L_i\cap K)$.

We say that the Levi pair $(\liel,L\cap K)$ of a constructible parabolic pair $(\lieq,L\cap K)$ is {\em constructible}.

An important corollary of Proposition \ref{prop:parabolic} is

\begin{corollary}\label{cor:constructible}
Ever $\theta$-stable Cartan subpair $(\lieh,H\cap K)\subseteq(\lieg,K)$ is constructible, in particular the reductibe Lie group $G$ corresponding to $(\lieg,K)$ is covered by the conjugates of reductive subgroups $L\subseteq G$ whose corresponding reductive pairs $(\liel,L\cap K)$ are constructible. The $L$ may be chosen the be associated to Cartan pairs.
\end{corollary}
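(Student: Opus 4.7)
The plan is to prove the two assertions of the corollary in turn.

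For the first assertion, that every $\theta$-stable Cartan subpair $(\lieh, H \cap K)$ is constructible, I would apply Proposition~\ref{prop:parabolic} directly to the given $\theta$-stable Cartan subalgebra $\lieh \subseteq \lieg$. The proposition furnishes a germane Borel subalgebra $\lieq = \lieh + (\lieu'' + \lieu')$ containing $\lieh$, together with a $\theta$-stable germane parabolic $\lieq' = \liel' + \lieu' \subseteq \lieg$ and a real parabolic $\lieq'' = \lieh + \lieu'' \subseteq \liel'$. Setting $\lieq_0 := \lieg$, $\lieq_1 := \lieq'$, $\lieq_2 := \lieq$, the chain $\lieq_2 \subseteq \lieq_1 \subseteq \lieq_0$ satisfies the constructibility condition: $\liel_0 \cap \lieq_1 = \lieq'$ is $\theta$-stable in $\lieg$ by construction, and $\liel_1 \cap \lieq_2 = \liel' \cap \lieq = \lieq''$ is the real parabolic of $\liel'$ supplied by the proposition. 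Hence $\lieq$ is a constructible germane Borel subalgebra with Levi $\lieh$, so $(\lieh, H \cap K)$ is a constructible Levi pair.

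For the covering assertion, I would appeal to the classical structure theory of real reductive groups, which guarantees that up to $G$-conjugacy there are only finitely many $\theta$-stable Cartan subalgebras $\lieh_1, \dots, \lieh_r$ of $\lieg$, and that the union of the conjugates of the corresponding Cartan subgroups $H_i \subseteq G$ covers $G$ in the sense developed in \cite[Chapter~IV]{book_knappvogan1995}. By the first step, each $(\lieh_i, H_i \cap K)$ is a constructible Cartan pair, so this produces the desired covering of $G$ by conjugates of reductive subgroups whose corresponding pairs are constructible, and moreover these subgroups can be taken to be associated to Cartan pairs.

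The first step is essentially bookkeeping once Proposition~\ref{prop:parabolic} is in hand. I expect the main conceptual obstacle to lie in the second step, where one has to be careful about the precise meaning of ``covered'': in a non-compact real reductive group not every element is semisimple, and strictly speaking the union of conjugates of Cartan subgroups exhausts only the set of semisimple elements. However, for the character-theoretic applications motivating this corollary this is sufficient, since regular semisimple elements form a dense open subset that determines all conjugation-invariant distributions.
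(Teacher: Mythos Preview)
Your proposal is correct and follows precisely the approach the paper intends: the corollary is stated in the paper immediately after Proposition~\ref{prop:parabolic} as a direct consequence, without an explicit proof, and your argument is exactly the natural unpacking of that implication. Your observation about ``covered'' meaning the regular semisimple locus is also apt, and matches how the corollary is later used in Theorem~\ref{thm:independence}, where the hypothesis is only that the Levi factors cover a \emph{dense} subset of $G$ up to conjugation.
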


\subsection{Lie algebra cohomology}

Fix a $(\lieg,K)$-module $X$, and consider a parabolic subpair $(\lieq,L\cap K)$ of $(\lieg,K)$ with Levi decomposition $\lieq=\liel+\lieu$. Consider the complex
$$
C_{\lieq}^q(X)\;:=\;\Hom_\CC(\bigwedge^q\lieu,X),
$$
with differential
$$
d:\;C_{\lieq}^q(X)\to C_{\lieq}^{q+1}(X),
$$
$$
f\;\mapsto\;df
$$
where
$$
[df](u_0\wedge u_1\wedge\cdots\wedge u_q)\;:=\;
\sum_{i=0}^q(-1)^i u_i\cdot f(u_0\wedge\cdots\wedge \widehat{u}_i\wedge\cdots\wedge u_q)
$$
$$
\;+\;
\sum_{i<j}^q(-1)^{i+j} f([u_i,u_j]\wedge u_0\wedge\cdots\wedge \widehat{u}_i\wedge\cdots\wedge\widehat{u}_j\wedge\cdots\wedge u_q).
$$
It is easily seen that $d\circ d=0$, hence $(C_{\lieq}^q(X),d)$ is a chain complex. We write $Z^q(X)$ for the kernel of $d:C_{\lieq}^q(X)\to C_{\lieq}^{q+1}(X)$ and $Z^q(X)$ for the image $d(C_{\lieq}^{q-1}(X))$. The reductive pair $(\liel,L\cap K)$ acts on $C_{\lieq}^q(X)$ and $d$ is equivariant for this action. As a result the cohomology
$$
H^q(\lieu; X)\;:=\;Z^q(X)/B^q(X)
$$
of this complex comes with a natural $(\liel,L\cap K)$-module structure.

By construction $C_{\lieq}^q(X)=0$ for all $q<0$ and $q>\dim\lieu$, which implies that $H^q(\lieu;X)=0$ in those degrees. The map \eqref{eq:tensorhom} provides us with an isomorphism
\begin{equation}
C_{\lieq}^q(X)\;\cong\;\bigwedge^q\lieu^*\otimes_\CC X
\label{eq:tensorcomplex}
\end{equation}
of $(\liel,L\cap K)$-modules.

\subsection{Homology and Poincar\'e duality}

Similarly we have a homology theory that is constructed dually via the complex
$$
C_q(X)\;:=\;\bigwedge^q\lieu\otimes_\CC X,
$$
with differential
$$
d:\;C_{q+1}(X)\to C_{q}(X),
$$
$$
d(u_0\wedge u_1\wedge\cdots\wedge u_q\otimes x)\;:=\;
\sum_{i=0}^q(-1)^{i+1} (u_0\wedge\cdots\wedge \widehat{u}_i\wedge\cdots\wedge u_q)\otimes(u_i\cdot x)
$$
$$
\;+\;
\sum_{i<j}^q(-1)^{i+j} [u_i,u_j]\wedge u_0\wedge\cdots\wedge \widehat{u}_i\wedge\cdots\wedge\widehat{u}_j\wedge\cdots\wedge u_q\otimes x.
$$
Note the sign change in the first sum. We denote the cycles in degree $q$ as $Z_q(X)$ and the boundaries with $B_q(X)$. Again the differential is $(\liel,L\cap K)$-linear and the homology
$$
H_q(\lieu; X)\;:=\;Z_q(X)/B_q(X)
$$
is an $(\liel,L\cap K)$-module.

\begin{proposition}[Easy duality]\label{prop:easyduality}
For any $(\lieq,L\cap K)$-module and any degree $q$ we have a natural canonical isomorphism
$$
H^q(\lieu; X^\vee)\;\cong\;
H_q(\lieu; X)^\vee.
$$
of $(\liel,L\cap K)$-modules. Here the supscript $\cdot^\vee$ denotes on both sides the $L\cap K$-finite dual.
\end{proposition}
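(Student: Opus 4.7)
The plan is to identify the cochain complex computing $H^\bullet(\lieu; X^\vee)$ degree by degree with the $L\cap K$-finite dual of the chain complex computing $H_\bullet(\lieu; X)$, and then to pass to (co)homology by exploiting the exactness of the dualization functor on locally $L\cap K$-finite modules. Since the key identifications in each degree are canonical, naturality and $(\liel, L\cap K)$-equivariance will be automatic.

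First I would construct the degreewise isomorphism of complexes. Because $\bigwedge^q \lieu$ is finite-dimensional, the natural pairing
$$
(\bigwedge^q \lieu)^* \otimes_\CC X^\vee \;\to\; \bigl(\bigwedge^q \lieu \otimes_\CC X\bigr)^\vee, \qquad \xi \otimes \eta \;\mapsto\; \bigl[v \otimes x \mapsto \xi(v)\,\eta(x)\bigr],
$$
is a natural isomorphism of $(\liel, L\cap K)$-modules: injectivity is clear, and surjectivity onto the $L\cap K$-finite dual is immediate from the isotypic decomposition of $X$ supplied by Proposition~\ref{prop:kdecomp} together with the finite-dimensionality of $\bigwedge^q \lieu$. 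Composing with \eqref{eq:tensorcomplex} yields an $(\liel, L\cap K)$-equivariant identification $C^q_\lieq(X^\vee) \cong C_q(X)^\vee$ in every degree.

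Next I would verify that under these identifications the coboundary of $(C^\bullet_\lieq(X^\vee), d)$ corresponds to the $\CC$-linear dual of the boundary of $(C_\bullet(X), d)$. The action of $\lieu$ on $X^\vee$ is the contragredient one, $[u \cdot f](x) = -f(u \cdot x)$, and the minus sign it introduces is exactly what converts the signs in the cohomological differential into those of the homological one (note the sign change explicitly recorded in the definition of $C_\bullet$). A line-by-line unfolding then shows that if $\phi \in C^q_\lieq(X^\vee)$ corresponds to $\Phi \in C_q(X)^\vee$ via $\Phi(v \otimes x) = \phi(v)(x)$, the two expressions $[(d\phi)(u_0 \wedge \cdots \wedge u_q)](x)$ and $\Phi\bigl(d(u_0 \wedge \cdots \wedge u_q \otimes x)\bigr)$ reduce to the same double sum. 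This sign bookkeeping is the only genuinely non-formal step and is the main (albeit elementary) obstacle.

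Finally I would invoke the exactness of $L\cap K$-finite dualization on the category of $(L\cap K)$-modules. By Proposition~\ref{prop:kdecomp}, each such module is a direct sum of its isotypic components $X_Y \cong Y \otimes_\CC V_Y$ for a multiplicity space $V_Y$, whose $L\cap K$-finite dual is $Y^\vee \otimes_\CC V_Y^*$; since vector-space duality is exact and direct sums preserve exactness, so is $(\cdot)^\vee$. Consequently dualization commutes with taking cohomology of $C_\bullet(X)$, yielding $H^q(C_\bullet(X)^\vee) \cong H_q(\lieu; X)^\vee$. Combining with the identification of complexes from the first two steps produces the desired natural isomorphism $H^q(\lieu; X^\vee) \cong H_q(\lieu; X)^\vee$ in $\mathcal C(\liel, L\cap K)$.
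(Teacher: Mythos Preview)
Your proof is correct and follows essentially the same approach as the paper: both rest on the identical sign computation showing that the cochain differential on $C_\lieq^\bullet(X^\vee)$ is adjoint (equivalently, dual) to the chain differential on $C_\bullet(X)$. The only cosmetic difference is in the final step, where the paper argues non-degeneracy of the induced pairing on (co)homology directly via orthogonal complements, whereas you package the same argument as an isomorphism of complexes together with exactness of the $L\cap K$-finite dual; these are two phrasings of one computation.
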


This duality is sometimes called easy duality, as opposed to hard duality (Poincar\'e duality) that we discuss below.

\begin{proof}
The natural perfect $(\lieq,L\cap K)$-equivariant pairing
$$
\langle\cdot,\cdot\rangle:\;\;\;X^\vee\times X\;\to\;\CC,
$$
$$
(y,x)\;\mapsto\;y(x)
$$
induces a natural perfect pairing
$$
\langle\cdot,\cdot\rangle:\;\;\;C_{\lieq}^q(X^\vee)\times C_q(X)\;\to\;\CC,
$$
$$
(f,u_0\wedge\cdots\wedge u_q\wedge x)\;\mapsto\;\langle f(u_0\wedge\cdots\wedge u_q),x\rangle,
$$
which is $(\liel,L\cap K)$-equivariant. We claim that this pairing descends to a perfect pairing
\begin{equation}
\langle\cdot,\cdot\rangle:\;\;\;H^q(\lieu;X^\vee)\times H_q(\lieu;X)\;\to\;\CC.
\label{eq:cohomologyhomologypairing}
\end{equation}
To see that this is well defined, we first observe that the maps
$$
d:\;\;\;C_{\lieq}^{q}(\lieu; X^\vee)\to
C_{\lieq}^{q+1}(\lieu; X^\vee),
$$
and
$$
d:\;\;\;C_{q+1}(\lieu; X)\to
C_{q}(\lieu; X),
$$
are adjoint with respect to $\langle\cdot,\cdot\rangle$. Indeed, we have
$$
\langle df,u_0\wedge\cdots u_q\otimes x\rangle\;=\;
\langle[df](u_0\wedge u_1\wedge\cdots\wedge u_q),x\rangle\;=\;
$$
$$
\sum_{i=0}^q(-1)^i \langle u_i\cdot f(u_0\wedge\cdots\wedge \widehat{u}_i\wedge\cdots\wedge u_q),x\rangle
$$
$$
\;+\;
\sum_{i<j}^q(-1)^{i+j} \langle f([u_i,u_j]\wedge u_0\wedge\cdots\wedge \widehat{u}_i\wedge\cdots\wedge\widehat{u}_j\wedge\cdots\wedge u_q),x\rangle\;=\;
$$
$$
\sum_{i=0}^q(-1)^{i+1} \langle f(u_0\wedge\cdots\wedge \widehat{u}_i\wedge\cdots\wedge u_q),u_i\cdot x\rangle
$$
$$
\;+\;
\sum_{i<j}^q(-1)^{i+j} \langle f([u_i,u_j]\wedge u_0\wedge\cdots\wedge \widehat{u}_i\wedge\cdots\wedge\widehat{u}_j\wedge\cdots\wedge u_q),x\rangle\;=\;
$$
$$
\langle f, d(u_0\wedge u_1\wedge\cdots\wedge u_q\otimes x)\rangle.
$$
Therefore, if $f\in C_{\lieq}^{q-1}(X^\vee)$ and $g\in Z_q(X)$ is a cycle, i.e.\ lies in the kernel of $d$,
$$
\langle df,g\rangle \;=\;\langle f,dg\rangle\;=\;\langle f,0\rangle\;=\;0,
$$
and similarly for a cocycle $f\in Z^q(X^\vee)$ and $g\in C_{q-1}(X)$
$$
\langle f,dg\rangle \;=\;\langle df,g\rangle\;=\;\langle 0,g\rangle\;=\;0.
$$
Therefore the pairing \eqref{eq:cohomologyhomologypairing} is well defined. To see that it is non-degenerate, we again use the same adjointness relation: Assume that $f\in Z^q(X^\vee)$ is a cocycle with
$$
\langle f,g\rangle\;=\;0
$$
for all cycles $g\in Z_q(X)$. This means that
$$
f\;\in\;Z^q(X^\vee)\cap Z_q(X)^\bot.
$$
By adjointness we have
$$
Z_q(X)\;=\;B^q(X^\vee)^\bot,
$$
because for any $h\in C_q(X)$ we have the equivalence
$$
\forall e\in C_{\lieq}^{q-1}(X^\vee):\;\langle de,h\rangle=0\;\;\Leftrightarrow\;\;
\forall e\in C_{\lieq}^{q-1}(X^\vee):\;\langle e,dh\rangle=0\;\;\Leftrightarrow\;\;dh=0.
$$
Therefore
$$
Z_q(X)^\bot\;=\;\left(B^q(X^\vee)^\bot\right)^\bot=B^q(X^\vee),
$$
and consequently $f$ is a coboundary. Switching the roles of $f$ and $g$ this completes the proof of non-degeneracy of \eqref{eq:cohomologyhomologypairing}.
\end{proof}

The following statement is a consequence of hard duality, and in this context also referred to as {\em Poincar\'e duality}. The {\em hard} refers to the generalization involving production or induction along the compact group as well, and in this case it is harder to proof. Our setting is not harder than above.

\begin{proposition}[Poincar\'e duality]\label{prop:hardduality}
For any $(\lieq,L\cap K)$-module and any degree $q$ we have a natural canonical isomorphism
$$
H_q(\lieu; X\otimes_\CC\bigwedge^{\dim\lieu}\lieu^*)\;\cong\;
H^{\dim\lieu-q}(\lieu; X).
$$
of $(\liel,L\cap K)$-modules.
\end{proposition}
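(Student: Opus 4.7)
The plan is to exhibit an explicit isomorphism of complexes, with a degree shift of $n := \dim\lieu$, between the chain complex computing the left-hand side and the cochain complex computing the right-hand side. In each degree, interior multiplication against a top form furnishes a canonical $\liel$-equivariant isomorphism
$$
c_q:\; \bigwedge^q \lieu \otimes_\CC \textstyle\bigwedge^n \lieu^* \;\xrightarrow{\sim}\; \bigwedge^{n-q} \lieu^*, \qquad v_1 \wedge \cdots \wedge v_q \otimes \omega \;\mapsto\; \iota_{v_q} \cdots \iota_{v_1} \omega.
$$
Coupling $c_q$ with $\id_X$ and using the identification \eqref{eq:tensorcomplex} produces an $(\liel, L\cap K)$-linear isomorphism
$$
\phi_q:\; C_q\bigl(X \otimes_\CC \textstyle\bigwedge^n \lieu^*\bigr) \;\xrightarrow{\sim}\; C_{\lieq}^{n-q}(X)
$$
in each degree. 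Equivariance under $L\cap K$ is automatic from naturality, and equivariance under $\liel$ uses only that $\liel$ normalizes $\lieu$, so that the contraction map is $\liel$-equivariant once the coadjoint $\liel$-action on $\bigwedge^n \lieu^*$ is taken into account.

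The main technical step is to show that $\phi_\bullet$ intertwines the two differentials up to a uniform sign. I would verify this on a pure tensor $v_1 \wedge \cdots \wedge v_q \otimes x \otimes \omega$ by comparing $\phi_{q-1} \circ d^{\mathrm{hom}}$ with $d^{\mathrm{coh}} \circ \phi_q$ term by term. Two observations make this tractable. First, since $\lieu$ is nilpotent, $\tr_{\lieu}(\ad u) = 0$ for every $u \in \lieu$, so $\lieu$ acts trivially on $\bigwedge^n \lieu^*$; consequently $u \cdot (x \otimes \omega) = (u \cdot x) \otimes \omega$, which matches the formula for the action of $u$ in the cohomology differential. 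Second, the coadjoint action of $\lieu$ on $\bigwedge^\bullet \lieu^*$ satisfies the Cartan identity $\mathcal L_u = \iota_u \delta + \delta \iota_u$, where $\delta$ denotes the Chevalley--Eilenberg differential on $\bigwedge^\bullet \lieu^*$; this identity converts the bracket contributions $[u_i, u_j]$ appearing in $d^{\mathrm{hom}}$ into the corresponding bracket contributions in $d^{\mathrm{coh}}$ after contraction.

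The hard part will be the sign bookkeeping: the reversal of contraction order in $c_q$ produces signs that must align with the sign discrepancy between the two differentials (note for instance the $(-1)^{i+1}$ in $d^{\mathrm{hom}}$ versus $(-1)^i$ in $d^{\mathrm{coh}}$). I would pin these down by working in a fixed ordered basis $(e_1, \dots, e_n)$ of $\lieu$ with dual basis $(e^1, \dots, e^n)$, tracking the index complementation $\{1,\dots,n\}\setminus\{i_1,\dots,i_q\}$ that arises from contraction against a chosen generator of $\bigwedge^n \lieu^*$, and matching monomials. Once $\phi_\bullet$ is shown to be an isomorphism of complexes, passing to (co)homology immediately yields the desired $(\liel, L\cap K)$-module isomorphism, which is manifestly natural in $X$ and independent of the choice of generator of $\bigwedge^n \lieu^*$ after passing to cohomology.
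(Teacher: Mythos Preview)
Your approach is essentially the same as the paper's: both construct an explicit degree-shifting isomorphism of complexes using the perfect pairing that a top form $\omega\in\bigwedge^{n}\lieu^{*}$ sets up between $\bigwedge^{q}\lieu$ and $\bigwedge^{n-q}\lieu^{*}$, both note that $\lieu$ acts trivially on the top exterior power (so the $X$-action terms match), and both reduce the claim to checking that the two differentials agree up to a global sign. Your contraction map $c_{q}$ is the same map the paper writes as $\eta_{w}$, just phrased in interior-product language.

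The one place worth comparing is the endgame. You propose to pin down the bracket terms via the Cartan identity and then track signs in a fixed basis; this is correct but laborious. The paper sidesteps the bookkeeping with a neat observation: after computing $\delta\circ d$ and $d\circ\delta$ on a pure tensor and pairing against test vectors $v_{q},\dots,v_{\dim\lieu}$, the sum $\delta(d(u\otimes x\otimes w))(v)+(-1)^{q}d(\delta(u\otimes x\otimes w))(v)$ is, for fixed $x$ and $w$, an alternating $X$-valued $(\dim\lieu+1)$-form in the combined variables $(u_{0},\dots,u_{q},v_{q},\dots,v_{\dim\lieu})$ on the $\dim\lieu$-dimensional space $\lieu$, hence identically zero. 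This dimension argument absorbs all the sign checking at once and is worth knowing.
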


\begin{proof}
Consider the natural map
$$
\bigwedge^{q}\lieu\otimes_\CC
\bigwedge^{\dim\lieu-q}\lieu\otimes_\CC
\bigwedge^{\dim\lieu}\lieu^*\;\to\;
\CC
$$
given explicitly by
$$
u\otimes
v\otimes
w
\;\mapsto\;w(u\wedge v).
$$
It is $(\liel,L\cap K)$-equivariant, and every non-zero $w$ identifies $\bigwedge^{\dim\lieu-q}\lieu$ as the dual of $\bigwedge^{q}\lieu$, i.e.\ it gives rise to an isomorphism
$$
\eta_w:\;\;\;\bigwedge^{\dim\lieu-q}\lieu\;\to\;(\bigwedge^{q}\lieu)^*.
$$
This induces an $(\liel,L\cap K)$-isomorphism
$$
\bigwedge^{q}\lieu\otimes_\CC
X\otimes_\CC
\bigwedge^{\dim\lieu}\lieu^*\;\to\;
\left(\bigwedge^{\dim\lieu-q}\lieu\right)^*\otimes_\CC
X,
$$
$$
u\otimes x\otimes w\;\mapsto\;\eta(w)\otimes x.
$$
The right hand side has a natural identification with the complex computing cohomology, hence we get an isomorphism
$$
\delta:\;\;\;C_q(X\otimes_\CC\bigwedge^{\dim\lieu}\lieu^*)\;\to\;
C_{\lieq}^{\dim\lieu-q}(X).
$$
Let us verify that $\delta$ commutes with the differentials up to sign:
$$
[\delta(d(u_0\wedge\cdots\wedge u_{q}\otimes x\otimes w)](v_q\wedge\cdots\wedge v_{\dim\lieu})\;=\;
$$
$$
\sum_{i=0}^q(-1)^{i+1} \delta(u_0\wedge\cdots\wedge \widehat{u}_i\wedge\cdots\wedge u_{q}\otimes u_i(x\otimes w))(v_q\wedge\cdots\wedge v_{\dim\lieu})\;+\;
$$
$$
\sum_{i<j}^q(-1)^{i+j} \delta([u_i,u_j]\wedge u_0\wedge\cdots\wedge \widehat{u}_i\wedge\cdots\wedge\widehat{u}_j\wedge\cdots\wedge u_q\otimes x\otimes w)(v_q\wedge\cdots\wedge v_{\dim\lieu})
\;=\;
$$
$$
\sum_{i=0}^q(-1)^{i+1} \delta(u_0\wedge\cdots\wedge \widehat{u}_i\wedge\cdots\wedge u_{q}\otimes (u_i\cdot x)\otimes w)(v_q\wedge\cdots\wedge v_{\dim\lieu})
\;+\;
$$
$$
\sum_{i<j}^q(-1)^{i+j} \delta([u_i,u_j]\wedge u_0\wedge\cdots\wedge \widehat{u}_i\wedge\cdots\wedge\widehat{u}_j\wedge\cdots\wedge u_q\otimes x\otimes w)(v_q\wedge\cdots\wedge v_{\dim\lieu}),
$$
because $\lieu$ acts trivially on $w$. We get
$$
\sum_{i=0}^q(-1)^{i+1} [\eta_w(u_0\wedge\cdots\wedge \widehat{u}_i\wedge\cdots\wedge u_{q})(v_q\otimes (u_i\cdot x)](v_q\wedge\cdots\wedge v_{\dim\lieu})\;+\;
$$
$$
\sum_{i<j}^q(-1)^{i+j} [\eta_w([u_i,u_j]\wedge u_0\wedge\cdots\wedge \widehat{u}_i\wedge\cdots\wedge\widehat{u}_j\wedge\cdots\wedge u_q\otimes x)](v_q\wedge\cdots\wedge v_{\dim\lieu})\;=\;
$$
$$
\sum_{i=0}^q(-1)^{i+1} w(u_0\wedge\cdots\wedge \widehat{u}_i\wedge\cdots\wedge u_{q}\wedge v_q\wedge\cdots\wedge v_{\dim\lieu})\cdot (u_i\cdot x)\;+\;
$$
$$
\sum_{i<j}^q(-1)^{i+j} w([u_i,u_j]\wedge u_0\wedge\cdots\wedge \widehat{u}_i\wedge\cdots\wedge\widehat{u}_j\wedge\cdots\wedge u_q\wedge v_q\wedge\cdots\wedge v_{\dim\lieu})\cdot x.
$$
Analogously we get
$$
[d(\delta(u_0\wedge\cdots\wedge u_{q}\otimes x\otimes w))](v_q\wedge\cdots\wedge v_{\dim\lieu})\;=\;
$$
$$
\sum_{j=q}^{\dim\lieu}(-1)^{j-q} v_j\delta(u_0\wedge\cdots\wedge u_{q}\otimes x\otimes w)(v_q\wedge\cdots\wedge \widehat{v}_j\wedge\cdots\wedge v_{\dim\lieu})\;+\;\cdots\;=\;
$$
$$
\sum_{j=q}^{\dim\lieu}(-1)^{j-q} v_j\eta_w(u_0\wedge\cdots\wedge u_{q}\otimes x)(v_q\wedge\cdots\wedge \widehat{v}_j\wedge\cdots\wedge v_{\dim\lieu})\;+\;\cdots\;=\;
$$
$$
\sum_{j=q}^{\dim\lieu}(-1)^{j-q} w(u_0\wedge\cdots\wedge u_{q}\wedge v_q\wedge\cdots\wedge \widehat{v}_j\wedge\cdots\wedge v_{\dim\lieu})\cdot (v_j\cdot x)\;+\;
$$
$$
\sum_{k<l}^q(-1)^{k+l} w([v_k,v_l]\wedge u_0\wedge\cdots\wedge u_q\wedge v_q\wedge\cdots\wedge \widehat{v}_k\wedge\cdots\wedge\widehat{v}_l\wedge\cdots\wedge v_{\dim\lieu})\cdot x,
$$
Summing up we conclude that
$$
\delta(d(u\otimes x\otimes w))(v)\;+\;(-1)^{q}d(\delta(u\otimes x\otimes w))(v)\;=\;
$$
$$
\sum_{i=0}^q(-1)^{i+1} w(u_0\wedge\cdots\wedge \widehat{u}_i\wedge\cdots\wedge u_{q}\wedge v_q\wedge\cdots\wedge v_{\dim\lieu})\cdot (u_i\cdot x)\;+\;
$$
$$
\sum_{j=q}^{\dim\lieu}(-1)^{j} w(u_0\wedge\cdots\wedge u_{q}\wedge v_q\wedge\cdots\wedge \widehat{v}_j\wedge\cdots\wedge v_{\dim\lieu})\cdot (v_j\cdot x).
$$
For any fixed $w$ and $x$ we may interpret this expression as an alternating $X$-valued $(\dim\lieu+1)$-form on $\lieu$, and every such form is zero.
\end{proof}

\begin{corollary}\label{cor:duality}
We have a canonical isomorphism
$$
H^q(\lieu; (X\otimes_\CC\bigwedge^{\dim\lieu}\lieu^*)^\vee)\;\cong\;
H^{\dim\lieu-q}(\lieu; X)^\vee.
$$
of $(\liel,L\cap K)$-modules.
\end{corollary}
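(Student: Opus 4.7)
The plan is to obtain this corollary as a direct consequence of the two duality results just established, by combining easy duality (Proposition \ref{prop:easyduality}) with Poincar\'e duality (Proposition \ref{prop:hardduality}). No new cohomological computation is required.

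First I would apply easy duality to the $(\lieq,L\cap K)$-module $X\otimes_\CC\bigwedge^{\dim\lieu}\lieu^*$ in place of $X$. This immediately yields a natural $(\liel,L\cap K)$-isomorphism
$$
H^q(\lieu;(X\otimes_\CC\bigwedge^{\dim\lieu}\lieu^*)^\vee)\;\cong\;H_q(\lieu;X\otimes_\CC\bigwedge^{\dim\lieu}\lieu^*)^\vee.
$$
Here one uses implicitly that $\bigwedge^{\dim\lieu}\lieu^*$ is a $(\lieq,L\cap K)$-module (in fact the action factors through $\liel$), so the tensor product is still a legitimate input for easy duality.

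Next I would apply Poincar\'e duality (Proposition \ref{prop:hardduality}) to rewrite the homology appearing on the right-hand side as cohomology in complementary degree, obtaining
$$
H_q(\lieu;X\otimes_\CC\bigwedge^{\dim\lieu}\lieu^*)\;\cong\;H^{\dim\lieu-q}(\lieu;X).
$$
Taking the $L\cap K$-finite dual of both sides and chaining the two isomorphisms yields precisely
$$
H^q(\lieu;(X\otimes_\CC\bigwedge^{\dim\lieu}\lieu^*)^\vee)\;\cong\;H^{\dim\lieu-q}(\lieu;X)^\vee.
$$

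There is no real obstacle here beyond bookkeeping: both intermediate isomorphisms were constructed canonically at the chain level in the proofs above, so naturality in $X$ is automatic, as is $(\liel,L\cap K)$-equivariance. The only small point worth checking is that the dualization functor $(-)^\vee$ is exact on the isotypic decomposition provided by Proposition \ref{prop:kdecomp}, so that it commutes with taking cohomology of $(\liel,L\cap K)$-modules in the sense needed to identify $H_q(\lieu;-)^\vee$ with the dual of the homology space; this follows at once from the fact that duality is compatible with kernels and images when restricted to $L\cap K$-finite parts.
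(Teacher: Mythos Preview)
Your proof is correct and takes essentially the same approach as the paper: apply easy duality (Proposition~\ref{prop:easyduality}) with $X\otimes_\CC\bigwedge^{\dim\lieu}\lieu^*$ in place of $X$, then Poincar\'e duality (Proposition~\ref{prop:hardduality}), and chain the resulting isomorphisms. The paper's own proof is just the one-line display of these two steps, without your additional remarks on naturality and exactness of $(-)^\vee$.
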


We remark that this isomorphism is even an isomorphism of universal $\delta$-functores. In particular it is compatible with the connecting morphisms.

\begin{proof}
From Propositions \ref{prop:easyduality} and \ref{prop:hardduality} we get
$$
H^q(\lieu; (X\otimes_\CC\bigwedge^{\dim\lieu}\lieu^*)^\vee)\;\cong\;
H_q(\lieu; X\otimes_\CC\bigwedge^{\dim\lieu}\lieu^*)^\vee\;\cong\;
H^{\dim\lieu-q}(\lieu; X)^\vee.
$$
\end{proof}

\subsection{The K\"unneth formula}

Consider two $(\lieg,K)$-modules $V$ and $W$. Then the tensor product $V\otimes_\CC W$ is a $(\lieg\times\lieg,K\times K)$-mdule and $\lieq\times\lieq$ is a parabolic subpair of the latter reductive pair. Then the $\lieu\times\lieu$-cohomology of $V\otimes_\CC W$ is calculated via the standard complex
$$
\Hom_\CC(\bigwedge^n\lieu\times\lieu,V\otimes_\CC W),
$$
which decomposes naturally into
$$
\bigoplus_{p+q=n}\Hom_\CC(\bigwedge^p\lieu,V)\otimes_\CC\Hom_\CC(\bigwedge^q\lieu,W),
$$
via the vector space isomorphism
$$
\phi\;:=\sum_{p+q=n}\phi^{p,q},
$$
where
$$
\phi^{p,q}:\;\;\Hom_\CC(\bigwedge^p\lieu,V)\otimes\Hom_\CC(\bigwedge^q\lieu,W)\;\to\;\Hom_\CC(\bigwedge^n\lieu\times\lieu,V\otimes_\CC W),
$$
$$
f\otimes g\;\;\;\mapsto\;\;\; [U_1\wedge U_2\mapsto f(v)\otimes g(w)]
$$
whenever $U_1\in\bigwedge^p\lieu$ and $U_2\in\bigwedge^q\lieu$ and the other terms are mapped to zero. It is not hard to see that this decomposition is compatible with the differentials on the complexes. Hence we deduce
\begin{proposition}[K\"unneth formula]\label{prop:kuenneth}
For any $V,W\in\mathcal C(\lieg,K)$, and any $n$ there is a canonical isomorphism
$$
\bigoplus_{p+q=n}H^p(\lieu;V)\otimes_\CC H^q(\lieu;W)\;\cong\;
H^{n}(\lieu\times\lieu;V\otimes_\CC W),
$$
of $(\liel\times\liel,(L\cap K)\times (L\cap K))$-modules.
\end{proposition}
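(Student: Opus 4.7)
The plan is to recognize the Chevalley--Eilenberg complex $C^\bullet_{\lieq\times\lieq}(V\otimes_\CC W)$ computing $H^\bullet(\lieu\times\lieu; V\otimes_\CC W)$ as the tensor product (as a complex) of $C^\bullet_\lieq(V)$ and $C^\bullet_\lieq(W)$, and then to invoke the algebraic K\"unneth theorem over the field $\CC$, where the vanishing of all Tor terms makes the passage from chain-level tensor product to cohomology painless. The map $\phi$ already written down in the text is the candidate chain isomorphism.

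First I would check that $\phi$ is an isomorphism of graded $(\liel\times\liel,(L\cap K)\times(L\cap K))$-modules. At the level of vector spaces it combines the natural decomposition
$$
\bigwedge^n(\lieu\oplus\lieu)\;\cong\;\bigoplus_{p+q=n}\bigwedge^p\lieu\otimes_\CC\bigwedge^q\lieu
$$
with the Hom--tensor isomorphism $\Hom_\CC(A\otimes B,V\otimes W)\cong \Hom_\CC(A,V)\otimes_\CC\Hom_\CC(B,W)$, which is valid because $\lieu$ is finite-dimensional so $A=\bigwedge^p\lieu$ and $B=\bigwedge^q\lieu$ are finite-dimensional as well. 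Equivariance under $(\liel\times\liel,(L\cap K)\times(L\cap K))$ is automatic from naturality: the action on either side is defined componentwise in the two copies and $\phi^{p,q}$ preserves this decomposition by construction.

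The substantive step is to verify that $\phi$ intertwines the Chevalley--Eilenberg differential on the left with the standard tensor-product differential
$$
d(f\otimes g)\;=\;df\otimes g\;+\;(-1)^p\,f\otimes dg,\qquad f\in C^p_\lieq(V),\; g\in C^q_\lieq(W),
$$
on the right. The decisive structural fact is that the two copies of $\lieu$ inside $\lieu\times\lieu$ commute, i.e.\ $[(u,0),(0,u')]=0$, so in the bracket sum occurring in the differential, no cross terms between the two factors survive: the brackets decompose as a sum of two disjoint pieces, one for each copy of $\lieu$. Similarly, the module-action sum $\sum_i(-1)^i (u_i,u_i')\cdot f(\dots)$ splits according to whether the generator comes from the first or second copy, and the signs $(-1)^i$ for generators of the second copy reassemble, after interchanging them with the first $p$ generators of the first copy, into the Koszul sign $(-1)^p$ multiplying the differential on the second tensor factor.

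Having established that $\phi$ is a $(\liel\times\liel,(L\cap K)\times(L\cap K))$-equivariant isomorphism of cochain complexes between $C^\bullet_{\lieq\times\lieq}(V\otimes_\CC W)$ and the total complex of $C^\bullet_\lieq(V)\otimes_\CC C^\bullet_\lieq(W)$, I would conclude by applying the algebraic K\"unneth theorem for complexes of $\CC$-vector spaces: since every $\CC$-module is flat, the cohomology of the tensor product complex is canonically $\bigoplus_{p+q=n}H^p\otimes_\CC H^q$, and this identification is functorial, hence equivariant for the Levi pair. The main obstacle is precisely the sign bookkeeping in the chain-map check; everything else is either formal or has already been arranged in the preceding subsections.
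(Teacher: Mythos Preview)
Your proposal is correct and follows essentially the same approach as the paper: the paper sets up the isomorphism $\phi$ of graded modules, asserts that ``this decomposition is compatible with the differentials on the complexes,'' and concludes directly. You have simply fleshed out the two steps the paper leaves implicit, namely the sign bookkeeping in the chain-map verification (using that the two copies of $\lieu$ commute) and the explicit invocation of the algebraic K\"unneth theorem over the field $\CC$.
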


\subsection{The Hochschild-Serre spectral sequence}

The Hochschild-Serre spectral sequence \cite{hochschildserre1953}, \cite[Chapter 5, Section 10]{book_knappvogan1995} will be our main fundamental tool in our algebraic character theory. It is a technically forbidding subject to show its existence, which has been discussed in detail in many standard text books.

Suppose we are given an inclusion of reductive pairs $(\lieh,N)\to(\lieg,K)$ and germane parabolic subalgebras $\liep\subseteq\lieh$ and $\lieq\subseteq\lieg$, satisfying $\liep=\lieq\cap\lieh$ with Levi decompositions $\liep=\liem+\lien$, $\lieq=\liel+\lieu$ subject to the condition $\liem\subseteq\liel$, i.e.\ $\liem=\liel\cap\lieh$. 
The main idea here to describe the structure of $\lieu$-cohomology as a $(\liem,M\cap N)$-module via the $\lien$-cohomology.
Hochschild and Serre gave a structural recipe, i.e.\ a spectral sequence, which inductively achieves this.

More concretely for any $(\lieq,L\cap K)$-module $X$ they start with the $(\liem,M\cap N)$-modules
\begin{equation}
E_1^{p,q}\;:=\;\bigwedge^p(\lieu/\lien)^*\otimes_\CC H^q(\lien; X).
\label{eq:e1term}
\end{equation}
Then they inductively construct a collection $(E^{p,q}_r)_{p,q\in\ZZ,r\geq 1}$ of $(\liem,M\cap N)$-modules together with morphisms
$$
d^{p,q}_r:\;\;\;E^{p,q}_r\;\to\;E^{p+r,q-r+1}_r,
$$
subject to the condition
\begin{equation}
d_r^{p+r,q-r+1}\circ d_r^{p,q}\;=\;0,
\label{eq:dschicht}
\end{equation}
and isomorphisms
\begin{equation}
\alpha_r^{p,q}:\;\;\;H^{p,q}(E_r^{\bullet,\bullet})\;\to\;E^{p,q}_{r+1},
\label{eq:alphaiso}
\end{equation}
where the bigraded cohomology is defined via the above differential as
\begin{equation}
H^{p,q}(E_r^{\bullet,\bullet})\;\;:=\;\;\kernel d^{p,q}_r/\image d_r^{p-r,q+r-1}.
\label{eq:Hschicht}
\end{equation} 
In particular this data builds an explicit bridge between the $r$-th layer $E_r^{p,q}$ and the $(r+1)$-th one. They showed that for $r_0$ large enough the differentials of the $r_0$-th layer and above vanish. Therefore \eqref{eq:Hschicht} becomes
$$
H^{p,q}(E_{r_0}^{\bullet,\bullet})\;\;=\;\;
E^{p,q}_{r_0}\;\;=:\;\;
E^{p,q}_\infty.
$$
The same is true for any $r\geq r_0$, i.e.\ we may naturally identify $E^{p,q}_r$ with $E^{p,q}_\infty$ via the isomorphisms \eqref{eq:alphaiso}.

Finally Hochschild and Serre construct a decreasing filtration of
$$
E^n\;:=\;H^n(\lien; X)
$$
via submodules
$$
0\;\subseteq\cdots\subseteq\;F^{p+1}E^{n}\;\subseteq\;F^pE^{n}\;\subseteq\;
F^{p-1}E^{n}\;\subseteq\;\cdots\;\subseteq\;F^0E^{n}\;=\;E^n,
$$
satisfying for $p$ large enough
$$
F^pE^n\;=\;0
$$
for all $n$, together with isomorphisms
$$
\beta^{p,q}:\;\;\;E_\infty^{p,q}\;\to\;F^pE^{p+q}/F^{p+1}E^{p+q}
$$
for all $p$ and $q$.

In abstract terms this is equivalent to saying that they constructed a spectral sequence $(E_r^{p,q},E^n)_{r,p,q,n}$ with differential of bidegree $(r,r-1)$ and $E_1$-term given by \eqref{eq:e1term}, converging to $H^{p+q}(\lieu; X)$. This is convergens expressed by the notation
$$
E_1^{p,q}\;\;\;\Longrightarrow\;\;\; H^{p+q}(\lieu; X).
$$
Their construction is functorial in $X$ and all maps involved are $(\liem,M\cap N)$-equivariant.

If there is another parabolic subalgbra $\lieq'\subseteq\lieg$ containing $\lieq$ with Levi factor $\lieh$, and nilpotent radical $\lieu'$ say, then the second layer is given by
$$
E_2^{p,q}\;=\;H^p(\lien; H^q(\lieu'; X)).
$$
In general there is no such $\lieq'$, and then the Hochschild-Serre spectral sequence is nothing but the Grothendieck spectral sequence for the composition of functor
$$
H^0(\lien;\cdot)\circ H^0(\lieu'; X)\;=\;
H^0(\lieu;\cdot).
$$

A spectral sequence contains a lot of information, which is often not easy to extract. However Euler characteristics are invariant in spectral sequences, i.e.\ the Euler characteristic of the $E_r$-terms agree for all $r$, and in particular they are the same as the Euler characteristic of the $E^n$-terms.

\subsection{Properties preserved by cohomology}

\begin{theorem}\label{thm:inheritance}
For any parabolic subpair $(\lieq,L\cap K)$ of a reductive pair $(\lieg,K)$, and any $q\in\ZZ$ the functor
$$
H^q(\lieu;-):\;\;\;{\mathcal C}(\lieg,K)\to{\mathcal C}(\liel,L\cap K)
$$
preserves the following properties:
\begin{itemize}
\item[(i)] finite-dimensionality,
\item[(ii)] $Z(\lieg)$- resp.\ $Z(\liel)$-finiteness,
\item[(iii)] admissibility for $\theta$-stable $\lieq$,
\item[(iv)] finite length for constructible $\lieq$,
\item[(v)] discrete decomposability for constructible $\lieq$,
\item[(vi)] discrete decomposability with finite multiplicities for constructible $\lieq$.
\end{itemize}
\end{theorem}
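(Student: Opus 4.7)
The plan is to handle the six properties individually, using the explicit cochain complex for the simpler ones and reducing the harder ones to the base cases of real and $\theta$-stable parabolics via the Hochschild-Serre spectral sequence. First, (i) is immediate from \eqref{eq:tensorcomplex}: finite-dimensionality of $X$ makes $C^\bullet_\lieq(X)$ finite-dimensional, and hence every $H^q(\lieu;X)$ too. For (ii) I would invoke the Casselman-Osborne lemma: the $Z(\lieg)$-action on $H^q(\lieu;X)$ factors through a homomorphism $\mu: Z(\lieg) \to Z(\liel)$ which, via Theorem \ref{thm:harishchandraisomorphism}, agrees with the inclusion $U(\lieh)^{W(\lieg,\lieh)} \subseteq U(\lieh)^{W(\liel,\lieh)}$; this inclusion is module-finite because the Weyl groups are finite, so a finite-codimension annihilator in $Z(\lieg)$ generates one of finite codimension in $Z(\liel)$. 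For (iii), with $\lieq$ now $\theta$-stable, $\bigwedge^q \lieu^*$ is a finite-dimensional $(L\cap K)$-module, and since $L \cap K$ is maximally compact in $L$, admissibility of $X$ as a $K$-module descends to finite $(L\cap K)$-multiplicities in the cochain complex \eqref{eq:tensorcomplex}, and hence in $H^q(\lieu;X)$.

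For (iv)-(vi) I would induct on the length $r$ of the constructibility chain $\lieq = \lieq_r \subseteq \cdots \subseteq \lieq_0 = \lieg$. The base case requires $\lieq$ itself to be real or $\theta$-stable inside $(\lieg,K)$: in the $\theta$-stable case, the Hecht-Schmid theorem yields (iv), while admissibility from (iii) combined with Kobayashi's criterion yields (v)-(vi); in the real case, finite length of the Jacquet module is classical (Casselman), and the discrete decomposability statements again reduce to Kobayashi. The inductive step writes $\lieq \subseteq \lieq_1 = \liel_1 + \lieu_1$ with $\lieq_1$ real or $\theta$-stable and $\lieq \cap \liel_1$ constructible in $\liel_1$ via the remaining, strictly shorter chain. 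Decomposing $\lieu = \lieu_1 \oplus (\lieu \cap \liel_1)$, the Hochschild-Serre spectral sequence
\[
E_2^{p,q} \;=\; H^p(\lieu \cap \liel_1;\, H^q(\lieu_1; X)) \;\Longrightarrow\; H^{p+q}(\lieu; X)
\]
reduces the statement to the base case applied to $\lieu_1$ followed by the induction hypothesis applied to the resulting $(\liel_1, L_1 \cap K)$-modules $H^q(\lieu_1; X)$. The subcategories in (iv)-(vi) are stable under subquotients and extensions (using \eqref{eq:multiplicityaddition}), so since the $E_\infty$-page consists of subquotients of $E_2$-terms and $H^{p+q}(\lieu;X)$ is built by finitely many extensions of $E_\infty^{p,q}$, the desired property is inherited.

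The main obstacle will be the base case for real parabolics in (v) and (vi): discrete decomposability is not intrinsically robust under Jacquet-type functors, so one must combine the concrete model of $H^q(\lieu;X)$ for real $\lieu$ with Kobayashi's characterization (part (iii) of his criterion) to extract a nonzero finite length $(\liem, M\cap N)$-submodule, and then pass to the union inside $H^q(\lieu;X)$. A secondary care is that the Hochschild-Serre reduction must respect discrete decomposability on the $E_2$-page, which is where the finite-dimensionality of $\bigwedge^p(\lieu/\lien)^*$ in \eqref{eq:e1term} becomes essential: tensoring with a finite-dimensional module preserves the class by Proposition \ref{prop:tensorstable}, ensuring $E_1^{p,q}$ remains in the appropriate subcategory.
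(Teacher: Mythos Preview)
Your argument for (iii) has a genuine gap. You claim that $K$-admissibility of $X$ ``descends to finite $(L\cap K)$-multiplicities in the cochain complex'' $\bigwedge^q\lieu^*\otimes_\CC X$. This is false: for $\theta$-stable $\lieq$ with $L\cap K\subsetneq K$, the restriction $X|_{L\cap K}$ of a $K$-admissible module typically has \emph{infinite} $(L\cap K)$-multiplicities, and tensoring with the finite-dimensional $\bigwedge^q\lieu^*$ does not repair this. The paper's argument instead uses the Hochschild-Serre spectral sequence for $\lieu\cap\liek\subseteq\lieu$ to reduce to $(\lieu\cap\liek)$-cohomology of $K$-types, where Kostant's Theorem (Theorem~\ref{thm:kostant}) applies; admissibility then emerges on the $E_1$-page, not on the raw cochain complex. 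Also, you have the references reversed in (iv): Hecht--Schmid treats the \emph{real} case, while the $\theta$-stable case follows from combining (ii) and (iii) via Proposition~\ref{prop:finitelength}.

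Your argument for (v) has a second, more structural gap. You write $X=\bigcup X_i$ with $X_i$ of finite length and want to apply the base case to each $X_i$, then pass to the union. But $H^q(\lieu;-)$ is a right derived functor and does \emph{not} commute with filtered direct limits, so there is no a priori reason why $H^q(\lieu;X)$ should be the union of the $H^q(\lieu;X_i)$; your Kobayashi-criterion sketch does not supply this. The paper circumvents this by Poincar\'e duality (Proposition~\ref{prop:hardduality}): one has $H^q(\lieu;X)\cong H_{\dim\lieu-q}(\lieu;X\otimes_\CC\bigwedge^{\dim\lieu}\lieu^*)$, and \emph{homology} (as a left derived functor, computed by tensor products) \emph{does} commute with direct limits. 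Then (iv) applied to each $X_i$ finishes (v), and (vi) follows from (v) together with the sharper Casselman--Osborne statement on infinitesimal characters and Harish-Chandra's Theorem~\ref{thm:harishchandrafinite}. Your inductive Hochschild--Serre framework for (iv) is fine and matches the paper, but for (v)--(vi) you need the Poincar\'e-duality/homology argument rather than a direct inductive attack.
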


\begin{proof}
The assertions (i) and (iii) follow easily. If $X$ is finite-dimensional or finitely generated, then all $C_{\lieq}^q(X)$ are finite-dimensional resp.\ finitely generated too. Therefore $H^q(\lieu;X)$ is finite-dimensional in the first case, and finitely generated as a $(\liel,L\cap K)$-module because $U(\liel)$ is noetherian.

The hard part is proving (ii), for which we refer to the literature \cite{casselmanosborne1975}, \cite[Theorem 7.56]{book_knappvogan1995}. One usually shows a stronger assertion: If $X$ is $Z(\lieg)$-finite, then $H^q(\lieu;X)$ is $Z(\liel)$-finite and if the latter has a non-trivial $\chi_\lambda$-primary component for some character $\lambda\in\lieh^*$, $\lieh\subseteq\liel$ a $\theta$-stable Cartan, then there is a $w\in W(\lieg,\lieh)$ such that the $\chi_{w(\lambda+\rho(\lien))}$-primary component of $X$ is nonzero, where $\lien$ is the nilpotent radical of a Borel $\lieb$ containing $\lieh$. We will need this below in our proof of (vi).

The proof of (iii) is an application of the Hochschild-Serre spectral sequence that we discuss below. The idea is to reduce the problem via this spectral sequence to the cohomology of the underlying compact pairs, where it becomes obvious thanks to Kostant's Theorem (Theorem \ref{thm:kostant} below). For details we refer to \cite[Corollary 5.140]{book_knappvogan1995}.

Assertion (iv) follows in the $\theta$-stable case from (ii) and (iii).  The real case is treated in \cite[Section 2]{hechtschmid1983}, and the general case follows from these two cases by Proposition \ref{prop:parabolic} via the Hochschild-Serre spectral sequence.

To proove (v), let
$$
X=\varinjlim X_i
$$
be a discretely decomposable $(\lieg,K)$-module, i.e.\ we assume the $X_i$ to be of finite length. Then by (v) the $\lieu$-cohomology of the $X_i$ is of finite length too, as is by Poincar\'e duality the homology of
$$
X_i\otimes_\CC\bigwedge^{\dim\lieu}\lieu^*.
$$
Now homology and the tensor product commute with direct limits. Therefore
$$
H_{\dim\lieu-q}(\lieu;X\otimes_\CC\bigwedge^{\dim\lieu}\lieu^*)\;=\;
\varinjlim H_{\dim\lieu-q}(\lieu;X_i\otimes_\CC\bigwedge^{\dim\lieu}\lieu^*)
$$
By Poincar\'e duality the left hand side computes the $q$-th $\lieu$-cohomology of $X$, and the right hand side the corresponding cohomology of $X_i$. This proves (v).

To proof (vi), we may assume without loss of generality that $K$ is connected. Now our observation in the proof of (ii) above allows us together with (v) to conclude the validity of (vi) thanks to Harish-Chandra's Theorem \ref{thm:harishchandrafinite}.
\end{proof}

\begin{theorem}[Kostant, Bott]\label{thm:kostant}
If $K$ is connected and the Levi factor $\liel$ of $\lieq=\liel+\lieu$ is a Cartan subalgebra, and if $V$ is an irreducible finite-dimensional representation of $(\lieg,K)$ of $\lieu$-hightest weight $\lambda\in\liel^*$, then for any $0\leq q\leq\dim\lieu$:
$$
H^q(\lieu;V)\;\cong\;
\bigoplus\limits_{\substack{w\in W(\lieg,\liel)\\\ell(w)=q}}
\CC_{w(\lambda+\rho(\lieu))-\rho(\lieu)}
$$
as an $\liel$-module.
\end{theorem}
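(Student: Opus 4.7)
My plan is to execute Kostant's original Laplacian argument. Under the identification \eqref{eq:tensorcomplex}, the standard complex computing $H^\bullet(\lieu;V)$ is $C^\bullet := \bigwedge^\bullet\lieu^* \otimes_\CC V$. I first fix a compact real form of $\lieg$ together with a $K$-invariant positive-definite Hermitian inner product on $V$, thereby equipping each $C^q$ with an induced Hermitian inner product. Letting $d^*$ denote the formal adjoint of the Koszul differential $d$, define the Laplacian $\Delta := dd^* + d^*d$. Standard Hodge-theoretic arguments on finite-dimensional inner-product spaces then identify $H^q(\lieu;V)$ with the space of harmonic cochains $\mathcal H^q := \ker(\Delta|_{C^q})$, so the proof reduces to computing this kernel.

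The heart of the matter is an explicit formula for $\Delta$ in terms of Casimirs. Let $\Omega_\lieg$ and $\Omega_\liel$ denote the Casimir operators of $\lieg$ and $\liel$ with respect to the Killing form. After twisting the diagonal $\liel$-action on $\bigwedge^\bullet\lieu^*\otimes V$ by the shift $-\rho(\lieu)$ on the $\bigwedge^\bullet\lieu^*$ factor, a direct expansion of $\Omega_\lieg$ relative to the triangular decomposition $\lieg = \lieu^- \oplus \liel \oplus \lieu$ yields an identity of the form $2\Delta = \Omega_\liel - \Omega_\lieg$ on $C^\bullet$. Since $V$ is irreducible of highest weight $\lambda$, the scalar $\Omega_\lieg$ acts on $C^\bullet$ as $\langle \lambda+\rho(\lieu),\lambda+\rho(\lieu)\rangle - \langle\rho(\lieu),\rho(\lieu)\rangle$. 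Consequently a weight vector $v_\mu\otimes \omega_I \in C^q$, indexed by a subset $I\subseteq\Delta(\lieu,\liel)$ with $|I|=q$, has $\liel$-weight $\mu-\sum_{\alpha\in I}\alpha$, and the harmonicity condition reduces to the Kostant orbit equation $\|\mu - \textstyle\sum_{\alpha\in I}\alpha + \rho(\lieu)\|^2 = \|\lambda + \rho(\lieu)\|^2$.

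It remains to convert this into Weyl-group combinatorics. Two classical ingredients do the job. First, since $\rho(\lieu)$ is strictly $\lieu$-dominant regular and every weight of $V$ lies in the convex hull of $W(\lieg,\liel)\cdot\lambda$, strict convexity forces $\mu-\sum_{\alpha\in I}\alpha+\rho(\lieu)$ to lie in the $W(\lieg,\liel)$-orbit of $\lambda+\rho(\lieu)$; write it as $w(\lambda+\rho(\lieu))$. Second, the standard bijection between Weyl elements and their inversion sets identifies $I$ with $\{\alpha\in\Delta(\lieu,\liel) : w^{-1}\alpha<0\}$, whose cardinality equals $\ell(w)$, so only $w$ with $\ell(w)=q$ contribute. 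The weight space at $\mu = w\lambda$ is one-dimensional because $w\lambda$ is an extremal weight of $V$, so each admissible $w$ contributes exactly one copy of $\CC_{w(\lambda+\rho(\lieu))-\rho(\lieu)}$ to $\mathcal H^q$, yielding the stated formula.

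The main obstacle is the Casimir identity $2\Delta = \Omega_\liel-\Omega_\lieg$: expanding $\Omega_\lieg$ in a basis adapted to $\lieg = \lieu^- \oplus \liel \oplus \lieu$ requires careful bookkeeping of cross-terms $\sum_\alpha X_\alpha X_{-\alpha}$ acting simultaneously on both tensor factors, and one must absorb the resulting $\rho(\lieu)$-shifts into the twisted $\liel$-action to obtain the clean identity. A secondary subtle point is the uniqueness of the pair $(w,I)$ solving the orbit equation, which ultimately rests on the strict dominance of $\rho(\lieu)$. Once these are in place the theorem is just linear algebra and root-system combinatorics.
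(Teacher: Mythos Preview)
The paper does not supply its own proof of this theorem; it merely records the attribution (``already implicit in Bott \cite{bott1957}, but became explicit only in \cite{kostant1961}'') and uses the result as a black box. Your proposal is a faithful reproduction of Kostant's original Laplacian argument from \cite{kostant1961}, and the outline is correct: the Hodge identification of cohomology with harmonics, the Casimir identity expressing $\Delta$ as half the difference of $\Omega_\liel$ (on the $\rho(\lieu)$-shifted action) and the scalar $\Omega_\lieg$, the resulting norm equation $\|\nu+\rho(\lieu)\|^2=\|\lambda+\rho(\lieu)\|^2$, and the combinatorial endgame via inversion sets are all standard and sound.

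Two minor cautions worth tightening in a final write-up. First, the precise form of the Casimir identity depends on conventions (sign of $d^*$, normalization of the Killing form, whether the $\rho$-shift is built into the $\liel$-action or appears as an additive constant); as you yourself flag, the cross-terms $\sum_\alpha(X_\alpha X_{-\alpha}+X_{-\alpha}X_\alpha)$ acting on both tensor factors must be handled carefully, and the literature contains several mutually inconsistent normalizations of ``$2\Delta=\Omega_\liel-\Omega_\lieg$''. Second, the step ``strict convexity forces \dots\ to lie in the $W$-orbit'' needs the lemma that for any weight $\mu$ of $V$ and any subset $I\subseteq\Delta(\lieu,\liel)$ one has $\|\mu+\rho(\lieu)-\langle I\rangle\|\le\|\lambda+\rho(\lieu)\|$ with equality only in the extremal case; this is Kostant's Lemma~5.9 and is not quite immediate from convexity of the weight polytope alone (one also uses that $\rho(\lieu)-\langle I\rangle$ lies in the convex hull of $W\cdot\rho(\lieu)$). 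With those details filled in, your argument is complete.
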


This Theorem was already implicit in Bott \cite{bott1957}, but became explicit only in \cite{kostant1961}.

\begin{corollary}\label{cor:denominator}
In the above setting we have
$$
H^q(\lieu;\CC)\;\cong\;
\bigoplus\limits_{\substack{w\in W(\lieg,\liel)\\\ell(w)=q}}
\CC_{w(\rho(\lieu))-\rho(\lieu)}.
$$
\end{corollary}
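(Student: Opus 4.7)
The plan is to apply Theorem \ref{thm:kostant} to the specific module $V = \CC$, viewed as the trivial $(\lieg,K)$-module. Since $\lieg$ acts trivially on $\CC$, in particular $\liel$ acts by the zero character and $\lieu$ annihilates $\CC$, so $\CC$ is the irreducible finite-dimensional representation of $(\lieg,K)$ with $\lieu$-highest weight $\lambda = 0 \in \liel^*$. The hypothesis that $\liel$ be a Cartan subalgebra and that $K$ be connected is retained from the corollary's ambient setting.

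Substituting $\lambda = 0$ into the Kostant--Bott formula yields
$$
H^q(\lieu;\CC)\;\cong\;
\bigoplus_{\substack{w\in W(\lieg,\liel)\\\ell(w)=q}}
\CC_{w(0+\rho(\lieu))-\rho(\lieu)}\;=\;
\bigoplus_{\substack{w\in W(\lieg,\liel)\\\ell(w)=q}}
\CC_{w(\rho(\lieu))-\rho(\lieu)},
$$
as $(\liel,L\cap K)$-modules (hence in particular as $\liel$-modules), which is exactly the desired identification.

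There is no real obstacle here: the corollary is a direct specialization, and the only content beyond invoking Theorem \ref{thm:kostant} is the trivial verification that $\CC$ is the irreducible representation of highest weight $0$. One small sanity check worth noting is that for $w = 1$ (the identity element, of length $0$) the summand is $\CC_{0}$, consistent with the obvious fact that $H^0(\lieu;\CC) = \CC^\lieu = \CC$; and for $w = w_0$ the longest element (of length $\dim\lieu$) the summand is $\CC_{-2\rho(\lieu)}$, which matches the well-known identification $H^{\dim\lieu}(\lieu;\CC) \cong \bigwedge^{\dim\lieu}\lieu^* \cong \CC_{-2\rho(\lieu)}$ coming from the top of the standard complex \eqref{eq:tensorcomplex}.
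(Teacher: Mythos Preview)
Your proof is correct and matches the paper's approach exactly: the corollary is stated without proof in the paper, as it is the immediate specialization of Theorem~\ref{thm:kostant} to the trivial module $V=\CC$ with highest weight $\lambda=0$. The additional sanity checks you provide for $w=1$ and $w=w_0$ are fine but not required.
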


\subsection{Cohomology and Grothendieck groups}

It turns out that $H^q(\lieu;-)$ is the $q$-th right derived functor of $H^0(\lieu;-)$. Therefore it maps short exact sequences
$$
\begin{CD}
0@>>> X@>>> Y@>>> Z@>>> 0
\end{CD}
$$
to long exact sequences
$$
\begin{CD}
0@>>> H^0(\lieu;X)@>>> H^0(\lieu;Y)@>>> H^0(\lieu;Z)
\end{CD}
$$
$$
\begin{CD}
@>>>
H^1(\lieu;X)@>>> H^1(\lieu;Y)@>>> H^1(\lieu;Z)@>>>\cdots
\end{CD}
$$
$$
\begin{CD}
@>>>
H^{\dim\lieu}(\lieu;X)@>>> H^{\dim\lieu}(\lieu;Y)@>>> H^{\dim\lieu}(\lieu;Z)@>>>0.
\end{CD}
$$
A fundamental consequence of this fact and Theorem \ref{thm:inheritance} is that the map
$$
H_\lieq(-):\;\;\;K_?(\lieg,K)\to K_?(\liel,L\cap K)
$$
$$
[X]\;\;\mapsto\;\sum_{q=0}^{\dim\lieu}(-1)^q[H^q(\lieu; X)]
$$
is a well defined group homomorphism for $?\in\{\rm a,\rm df,\rm fl,\rm fd\}$, where in the case $?=\rm a$ we always implicitly assume $\lieq$ to be $\theta$-stable.

\section{Algebraic Characters}

After all the preparation in the previous section, this section contains the main body of our theory.

\subsection{Definition}

Fix a reductive pair $(\lieg,K)$ and a germane parabolic subalgebra $\lieq\subseteq\lieg$ with Levi decomposition $\lieq=\liel+\lieu$. Define the (relative) {\em Weyl element} (with respect to $\lieq$) as
$$
W_{\lieq}\;:=\;H_\lieq({\bf1})\;\in\;K_{\rm fd}(\liel, L\cap K).
$$
The isomorphism \eqref{eq:tensorcomplex} tells us that
\begin{equation}
W_{\lieq}\;=\;\sum_{q=0}^{\dim\lieu}(-1)^q[\bigwedge^q\lieu^*].
\label{eq:wformula}
\end{equation}
Choose any $?\in\{\rm a,\rm df,\rm fl,\rm fd\}$. From now on we assume that $\lieq$ is $\theta$-stable if $?=\rm a$ and constructible in the other cases.

Then Proposition \ref{prop:tensorstable} tells us that the category $\mathcal C_{?}(\liel,L\cap K)$ is stable under tensoring with finite-dimensional modules, and in particular the localization
$$
C_{\lieq,?}(\liel,L\cap K)\;:=\;K_?(\liel,L\cap K)[W_\lieq^{-1}]
$$
is well defined. As we saw before, Theorem \ref{thm:inheritance} and the long exact sequence for cohomology, tells us that $\lieu$-cohomology defines a map map $H_\lieq$ between the $K$-groups of $(\lieg,K)$ and $(\liel,L\cap K)$, and we set
$$
c_\lieq:\;\;\;K_{?}(\lieg,K)\;\to\;C_{\lieq,?}(\liel,L\cap K),
$$
$$
X\;\mapsto\;\frac{H_\lieq(X)}{W_\lieq}.
$$

\subsection{Formal properties}

The fundamental properties of $c_\lieq$ are that it satisfies the analogues of axioms (A) and (M). However as in general multiplication is only partially defined, as for example the tensor product of finite length modules need not be of finite length again.

\begin{proposition}[Additivity and Multiplicativity]\label{prop:addmult}
The map $c_\lieq$ is additive and multiplicative in the sense that
$$
c_\lieq(X+Y)\;=\;
c_\lieq(X)+c_\lieq(Y),
$$
$$
c_\lieq({\bf1})\;=\;{\bf1},
$$
and if $X,Y,X\otimes_\CC Y\in\mathcal C_?(\lieg,K)$, then
$$
c_\lieq(X\cdot Y)\;=\;
c_\lieq(X)\cdot c_\lieq(Y)
$$
under the assumption that $\lieq$ is a Borel algebra.
\end{proposition}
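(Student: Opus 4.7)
The additivity of $c_\lieq$ is formal: since $H^q(\lieu;-)$ is the $q$-th right derived functor of $H^0(\lieu;-)$, any short exact sequence $0\to X\to Y\to Z\to 0$ in $\mathcal C_?(\lieg,K)$ yields a long exact cohomology sequence, from which $H_\lieq(Y)=H_\lieq(X)+H_\lieq(Z)$ in $K_?(\liel,L\cap K)$ is immediate, and passing to the localization preserves this. The normalization $c_\lieq({\bf1})={\bf1}$ follows directly from the definition $W_\lieq=H_\lieq({\bf1})$.

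For the multiplicativity, the plan is to combine the K\"unneth formula (Proposition \ref{prop:kuenneth}) with the Hochschild--Serre spectral sequence attached to the diagonal embedding of reductive pairs $(\lieg,K)\hookrightarrow(\lieg\oplus\lieg,K\times K)$. Take the parabolic $\lieq\oplus\lieq$ in the big pair, with Levi decomposition $(\liel\oplus\liel)+(\lieu\oplus\lieu)$, and the diagonal parabolic $\Delta(\lieq)\subseteq\Delta(\lieg)$. The compatibilities $\Delta(\lieq)=(\lieq\oplus\lieq)\cap\Delta(\lieg)$ and $\Delta(\liel)=(\liel\oplus\liel)\cap\Delta(\lieg)$ place us squarely in the Hochschild--Serre framework, and applied to the module $X\otimes_\CC Y$ the spectral sequence reads
\begin{equation*}
E_1^{p,q}\;=\;\bigwedge^p\bigl((\lieu\oplus\lieu)/\Delta(\lieu)\bigr)^*\otimes_\CC H^q\bigl(\Delta(\lieu);X\otimes_\CC Y\bigr)\;\Longrightarrow\;H^{p+q}(\lieu\oplus\lieu;X\otimes_\CC Y).
\end{equation*}
The projection $(u_1,u_2)\mapsto u_1-u_2$ identifies $(\lieu\oplus\lieu)/\Delta(\lieu)\cong\lieu$ as a $(\liel,L\cap K)$-module, and $H^q(\Delta(\lieu);X\otimes_\CC Y)$ coincides with $H^q(\lieu;X\otimes_\CC Y)$ computed from the internal (diagonal) tensor product. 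Taking Euler characteristics in $K_?(\liel,L\cap K)$, the invariance across the pages of the spectral sequence makes the $E_1$-side evaluate to $W_\lieq\cdot H_\lieq(X\otimes_\CC Y)$. Rewriting the abutment via K\"unneth and restricting from $(\liel\oplus\liel)$-modules to the diagonal $\Delta(\liel)$ gives $H_\lieq(X)\cdot H_\lieq(Y)$. Equating the two sides and dividing by $W_\lieq^2$ in the localization $C_{\lieq,?}(\liel,L\cap K)$ yields $c_\lieq(X\otimes_\CC Y)=c_\lieq(X)\cdot c_\lieq(Y)$.

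The hard part will be making rigorous sense of the product $c_\lieq(X)\cdot c_\lieq(Y)$ in $C_{\lieq,?}(\liel,L\cap K)$: for general $X,Y\in\mathcal C_?(\lieg,K)$ the tensor product $H^a(\lieu;X)\otimes_\CC H^b(\lieu;Y)$ of two arbitrary objects of $\mathcal C_?(\liel,L\cap K)$ need not itself lie in that category, so the corresponding Grothendieck-group multiplication is not automatically defined. The Borel hypothesis is what makes this unproblematic: when $\lieq$ is a Borel, $\liel$ is a Cartan, every $(\liel,L\cap K)$-module decomposes into characters, and one obtains a natural multiplicative structure on $K_?(\liel,L\cap K)$ which is compatible with restriction from $(\liel\oplus\liel,(L\cap K)^2)$-modules along the diagonal. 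Verifying that this product is precisely the one induced by the K\"unneth isomorphism under restriction is the technical core of the argument; once that compatibility is in place the Euler-characteristic computation above directly produces the stated multiplicative identity.
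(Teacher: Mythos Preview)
Your proposal is correct and follows essentially the same route as the paper: additivity from the long exact sequence, multiplicativity from the K\"unneth formula combined with the Hochschild--Serre spectral sequence for the diagonal embedding $(\lieg,K)\hookrightarrow(\lieg\oplus\lieg,K\times K)$, together with the identification $(\lieu\oplus\lieu)/\Delta(\lieu)\cong\lieu$ to recognize the Weyl denominator. Your diagnosis of the Borel hypothesis---that it is needed so that the individual tensor products $H^a(\lieu;X)\otimes_\CC H^b(\lieu;Y)$ land in $\mathcal C_?(\liel,L\cap K)$ and hence the K\"unneth identity can be read in the Grothendieck group---is exactly the reason the paper gives.
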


All identities are understood in $C_{\lieq,?}(\liel,L\cap K)$.

Our requirement of minimality of $\lieq$ for multiplicativity becomes evident in the proof: In general we cannot garantee that
$$
H^p(\lieu;X)\otimes_\CC H^q(\lieu;Y)\in \mathcal C_?(\liel,L\cap K),
$$
even though we conjecture that this is always the case for $X\otimes_\CC Y\in \mathcal C_?(\lieg,K)$.

\begin{proof}
Additivity is a formal consequence of the long exact sequence for cohomology (without which $c_\lieq$ were not well defined on the Grothendieck group).

As for the multiplicativity, we consider the $\lieu\times\lieu$-cohomology of $X\otimes_\CC Y$ as an $(\liel\times\liel,(L\cap K)\times(L\cap K))$-module. Then the K\"unneth formula from Proposition \ref{prop:kuenneth} tells us that after restricting to the diagonal
$$
\Delta:\;\;\;(\liel,(L\cap K))\;\to\;
(\liel\times\liel,(L\cap K)\times(L\cap K)),
$$
we have an identity
\begin{equation}
H_\lieq(X)\cdot H_\lieq(Y)\;=\;
H_{\lieq\times\lieq}(X\otimes_\CC Y)
\label{eq:hkuenneth}
\end{equation}
in $K_?(\liel,L\cap K)$. Now the stability of Euler characteristics in the Hochschild-Serre spectral sequence for the embedding
$$
\Delta:\;\;\;(\lieg,K)\;\to\;(\lieg\times\lieg,K\times K)
$$
and the parabolic subalgebras $\lieq$ resp.\ $\lieq\times\lieq$ gives
$$
H_{\lieq\times\lieq}(X\otimes_\CC Y)\;=\;
\sum_{p,q}(-1)^{p+q}
[\bigwedge^p(\lieu\times\lieu/\Delta(\lieu))^*]
\cdot[H^q(\Delta(\lieu);X\otimes_\CC Y)]
$$
in $K_?(\liel,L\cap K)$. Now we have an isomorphism
$$
\bigwedge^p(\lieu\times\lieu/\Delta(\lieu))^*\;\cong\;\bigwedge^p\lieu^*
$$
of $(\liel,L\cap K)$-modules, and hence we get
$$
H_{\lieq\times\lieq}(X\otimes_\CC Y)\;=\;W_\lieq\cdot H_\lieq(X\otimes_\CC Y).
$$
Together with \eqref{eq:hkuenneth} this proves the claim.
\end{proof}

An important consequence of the multiplicativity is
\begin{corollary}\label{cor:addloc}
For any $W'\in K_{\rm fd}(\lieg,K)$ the map $c_\lieq$ induces a well defined additive and multiplicative map
$$
c_\lieq:\;\;\;K_{?}(\lieg,K)[W'^{-1}]\;\to\;C_{\lieq,?}(\liel,L\cap K)[W'|_{\liel,L\cap K}^{-1}].
$$
\end{corollary}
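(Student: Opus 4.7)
The plan is to exhibit $c_\lieq$ as a $K_{\rm fd}(\lieg, K)$-linear map and then extend it via the universal property of localization. The target $C_{\lieq,?}(\liel, L\cap K)$ is to be viewed as a $K_{\rm fd}(\lieg, K)$-module through the restriction ring homomorphism $K_{\rm fd}(\lieg, K) \to K_{\rm fd}(\liel, L\cap K)$ that sends the class of a finite-dimensional $(\lieg,K)$-module to the class of its underlying $(\liel, L\cap K)$-module.

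The key step is to verify
$$c_\lieq([W'] \cdot [X]) \;=\; [W'|_{\liel, L\cap K}] \cdot c_\lieq([X])$$
for every $W' \in K_{\rm fd}(\lieg, K)$ and every $[X] \in K_?(\lieg, K)$. First I would compute $c_\lieq([W'])$ for finite-dimensional $W'$: the complex $C^\bullet_\lieq(W') \cong \bigwedge^\bullet \lieu^* \otimes_\CC W'$ from \eqref{eq:tensorcomplex} consists of finite-dimensional $(\liel, L\cap K)$-modules, so invariance of Euler characteristics in $K_{\rm fd}(\liel, L\cap K)$ yields $H_\lieq([W']) = W_\lieq \cdot [W'|_\liel]$, whence $c_\lieq([W']) = [W'|_\liel]$. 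Next I would repeat the K\"unneth-plus-Hochschild-Serre argument of Proposition \ref{prop:addmult}, applied to $W' \otimes_\CC X$ and the diagonal embedding $\Delta: (\lieg, K) \to (\lieg \times \lieg, K\times K)$, to obtain the displayed identity. Crucially, the Borel hypothesis of Proposition \ref{prop:addmult} is not needed here, because $H^p(\lieu; W')$ is finite-dimensional, so every intermediate tensor product $H^p(\lieu; W') \otimes H^q(\lieu; X)$ automatically lies in $\mathcal C_?(\liel, L\cap K)$ by Proposition \ref{prop:tensorstable}.

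Once $K_{\rm fd}(\lieg, K)$-linearity of $c_\lieq$ is established, I would compose with the localization morphism $C_{\lieq,?}(\liel, L\cap K) \to C_{\lieq,?}(\liel, L\cap K)[W'|_\liel^{-1}]$. The class $[W']\in K_{\rm fd}(\lieg, K)$ acts on the target via the invertible operator $[W'|_\liel]$, so the universal property of localization at the multiplicative subset $\{(W')^k\}_{k \geq 0}$ produces a unique $K_{\rm fd}(\lieg, K)$-linear extension
$$c_\lieq: K_?(\lieg, K)[W'^{-1}] \;\to\; C_{\lieq,?}(\liel, L\cap K)[W'|_\liel^{-1}].$$
Additivity is preserved by this construction, and multiplicativity, wherever it is defined on the unlocalized source (by Proposition \ref{prop:addmult}, under the Borel hypothesis), passes to fractions by the standard calculus of fractions.

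The main obstacle I anticipate is the first step: verifying that multiplication by a finite-dimensional class $[W']$ corresponds under $c_\lieq$ to multiplication by $[W'|_\liel]$ \emph{without} invoking the Borel hypothesis on $\lieq$. This requires carefully revisiting the K\"unneth-plus-Hochschild-Serre argument in Proposition \ref{prop:addmult} and observing that the finite-dimensionality of $W'$ is precisely what removes the obstruction that would otherwise force $\lieq$ to be minimal. Everything else is a routine application of the universal property of localization and of standard fraction calculus.
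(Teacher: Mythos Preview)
Your proposal is correct and matches the paper's intended argument; the paper gives no explicit proof, stating only that the corollary is ``an important consequence of the multiplicativity'' established in Proposition~\ref{prop:addmult}. Your write-up is in fact more careful than the paper on one point: you explicitly observe that when one tensor factor $W'$ is finite-dimensional, each $H^p(\lieu;W')$ is finite-dimensional and hence $H^p(\lieu;W')\otimes_\CC H^q(\lieu;X)\in\mathcal C_?(\liel,L\cap K)$ by Proposition~\ref{prop:tensorstable}, so the K\"unneth--Hochschild--Serre argument goes through without the Borel hypothesis on $\lieq$. The paper leaves this implicit. Note also that your computation $c_\lieq([W'])=[W'|_{\liel,L\cap K}]$ is exactly the content of Proposition~\ref{prop:restriction}, which appears immediately after the corollary; you have effectively reproved that special case en route.
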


\begin{proposition}[Compatibility with restriction]\label{prop:restriction}
The map $c_\lieq$ is compatible with restriction, i.e.\ if $X\in\mathcal C_?(\lieg,K)$ and $X|_{\liel,L\cap K}\in\mathcal C_?(\liel,L\cap K)$ then
$$
c_\lieq(X)\;=\;[X|_{\liel,L\cap K}].
$$
\end{proposition}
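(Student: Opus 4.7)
The plan is to show the stronger identity
\[
H_\lieq(X) \;=\; W_\lieq \cdot [X|_{\liel,L\cap K}]
\]
in $K_?(\liel,L\cap K)$, since then dividing by $W_\lieq$ in the localization $C_{\lieq,?}(\liel,L\cap K)$ immediately yields the claim. The identity is an instance of the fact that in a Grothendieck group, the Euler characteristic of a bounded complex equals the Euler characteristic of its cohomology.

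First I would observe that the standard complex computing $H^q(\lieu;X)$ is, via the isomorphism \eqref{eq:tensorcomplex}, the bounded complex
\[
C_\lieq^q(X) \;\cong\; \bigwedge^q\lieu^*\otimes_\CC X|_{\liel,L\cap K}
\]
of $(\liel,L\cap K)$-modules. Since each $\bigwedge^q\lieu^*$ is finite-dimensional and $X|_{\liel,L\cap K}\in\mathcal C_?(\liel,L\cap K)$ by hypothesis, Proposition \ref{prop:tensorstable} ensures that every $C_\lieq^q(X)$ lies in $\mathcal C_?(\liel,L\cap K)$. Moreover, the subobjects $Z^q$ (cycles) and $B^q$ (boundaries) are submodules of $C_\lieq^q(X)$, hence inherit the relevant finiteness property, so that each of the short exact sequences
\[
0\to Z^q\to C_\lieq^q(X)\to B^{q+1}\to 0,\qquad 0\to B^q\to Z^q\to H^q(\lieu;X)\to 0
\]
consists of objects in $\mathcal C_?(\liel,L\cap K)$ (using Theorem \ref{thm:inheritance} for the last term).

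Taking classes in $K_?(\liel,L\cap K)$ and alternating, the two families of short exact sequences yield the standard telescoping identity
\[
\sum_{q=0}^{\dim\lieu}(-1)^q[C_\lieq^q(X)] \;=\; \sum_{q=0}^{\dim\lieu}(-1)^q[H^q(\lieu;X)] \;=\; H_\lieq(X).
\]
On the other hand, using the $(\liel,L\cap K)$-module decomposition $C_\lieq^q(X)\cong \bigwedge^q\lieu^*\otimes_\CC X|_{\liel,L\cap K}$ together with distributivity of the $K_{\rm fd}(\liel,L\cap K)$-module structure on $K_?(\liel,L\cap K)$, we rewrite
\[
\sum_{q=0}^{\dim\lieu}(-1)^q[C_\lieq^q(X)] \;=\; \Bigl(\sum_{q=0}^{\dim\lieu}(-1)^q[\bigwedge^q\lieu^*]\Bigr)\cdot [X|_{\liel,L\cap K}] \;=\; W_\lieq\cdot[X|_{\liel,L\cap K}],
\]
where the last equality is \eqref{eq:wformula}. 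Combining the two identities completes the argument.

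The only genuine point requiring care is verifying that all the intermediate objects (the cycles and boundaries of the Koszul-type complex) actually lie in $\mathcal C_?(\liel,L\cap K)$ so that the exact-sequence relations hold in $K_?(\liel,L\cap K)$; once this subobject-closure is granted, the proof is a bookkeeping exercise in the Grothendieck group and no spectral sequence machinery is needed. For $?\in\{\rm fd,\rm fl,\rm a\}$ this closure is immediate from the definitions, and for $?\in\{\rm df\}$ it follows again from Theorem \ref{thm:inheritance} applied to subquotients, so no real obstacle arises.
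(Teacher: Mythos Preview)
Your proof is correct and follows essentially the same route as the paper: both show $H_\lieq(X)=W_\lieq\cdot[X|_{\liel,L\cap K}]$ by identifying the Euler characteristic of the standard complex with that of its cohomology, then factoring out $[X]$ via \eqref{eq:tensorcomplex} and \eqref{eq:wformula}. The paper simply asserts the ``formal identity'' $H_\lieq(X)=\sum_q(-1)^q[C_\lieq^q(X)]$, whereas you spell out the short exact sequences involving cycles and boundaries and verify these intermediate objects lie in $\mathcal C_?(\liel,L\cap K)$; this extra care is justified and the closure under subobjects is indeed immediate for all the categories in question (your appeal to Theorem~\ref{thm:inheritance} for $?={\rm df}$ is not quite the right reference---that theorem concerns cohomology, not subobjects---but the needed closure of $\mathcal C_{\rm df}$ under submodules is elementary).
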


This identity again is understood in $C_{\lieq,?}(\liel,L\cap K)$.

\begin{proof}
Suppose that the restriction of $X$ to the Levi pair lies in $\mathcal C_?(\liel,L\cap K)$. Then this is true for the complex $C_{\lieq}^q(X)$ computing cohomology. Now we have the formal identity
$$
H_\lieq(X)\;=\;\sum_{q=0}^{\dim\lieu}(-1)^q[C_{\lieq}^q(X)].
$$
By the formula \eqref{eq:tensorcomplex} we get
$$
\sum_{q=0}^{\dim\lieu}(-1)^q[C_{\lieq}^q(X)]\;=\;
\sum_{q=0}^{\dim\lieu}(-1)^q[\bigwedge^q\lieu^*\otimes_\CC X)]\;=\;
[X]\cdot\sum_{q=0}^{\dim\lieu}(-1)^q[\bigwedge^q\lieu^*].
$$
With the identity \eqref{eq:wformula} we conclude that
\begin{equation}
H_\lieq(X)\;=\;[X]\cdot W_\lieq,
\label{eq:riemannroch}
\end{equation}
is an identity in $K_{?}(\liel,L\cap K)$, which concludes the proof.
\end{proof}

The identity \eqref{eq:riemannroch} is sometimes also referred to as Riemann-Roch formula, for it gives an explicit expression for the Euler-Poincar\'e characteristic of the cohomology.

\subsection{Applications to finite-dimensional modules}

Let us assume in this section for simplicity that $K$ is connected, although all statements remain true with disconnected $K$, although the arguments get a bit more involved.

What we have done so far enables us already to study interesting consequences of our results in the case of finite-dimensional modules.

Forgetting along the inclusion of the Levi pair induces an additive and multiplicative map
$$
{\mathcal F}:\;\;\;K_{\rm fd}(\lieg,K)\;\to\;K_{\rm fd}(\liel,L\cap K).
$$
As multiplication is always defined in the finite-dimensional setting, this map is actually a ring homomorphism.

Consider the following diagram
$$
\begin{CD}
K_{\rm fd}(\lieg,K)@>{\mathcal F}>>K_{\rm fd}(\liel,L\cap K)\\
@| @VVV\\
K_{\rm fd}(\lieg,K)@>{c_\lieq}>>C_{\lieq,\rm fd}(\liel,L\cap K)
\end{CD}
$$
of rings, which is commutative by Proposition \ref{prop:restriction}. By Proposition \ref{prop:connecteddomain} the vertical localization map is a monomorphism. As we already know that $\mathcal F$ is a monorphism, this shows
\begin{theorem}\label{thm:fdmono}
For connected $K$ the algebraic character map
$$
c_\lieq:\;\;\;K_{\rm fd}(\lieg,K)\;\to\;C_{\lieq,\rm fd}(\liel,L\cap K)
$$
is a monomorphism of rings, and it satisfies
$$
c_\lieq(X)\;=\;[X|_{\liel,L\cap K}].
$$
\end{theorem}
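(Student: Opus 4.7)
The plan is to prove the theorem by realizing $c_\lieq$ as the composition of two injective ring homomorphisms, following the commutative diagram displayed just before the statement:
$$
\begin{CD}
K_{\rm fd}(\lieg,K)@>{\mathcal F}>>K_{\rm fd}(\liel,L\cap K)\\
@| @VVV\\
K_{\rm fd}(\lieg,K)@>{c_\lieq}>>C_{\lieq,\rm fd}(\liel,L\cap K)
\end{CD}
$$

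First I would verify the commutativity. Any finite-dimensional $(\lieg,K)$-module $X$ restricts to a finite-dimensional $(\liel, L\cap K)$-module, so the hypothesis of Proposition \ref{prop:restriction} is automatically satisfied, and we obtain $c_\lieq([X]) = [X|_{\liel,L\cap K}]$ in $C_{\lieq,\rm fd}(\liel,L\cap K)$. This identity is simultaneously the commutativity of the diagram and the second assertion of the theorem.

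Next I would show that $\mathcal F$ is injective. Fixing a $\theta$-stable Cartan subalgebra $\lieh \subseteq \liel$ of $\lieg$, the composition of $\mathcal F$ with the further forgetful map $K_{\rm fd}(\liel, L\cap K)\to K_{\rm fd}(\lieh)$ is precisely Cartan's homomorphism \eqref{eq:cartanf}, which was observed in Section \ref{sec:whatisacharacter} to be a monomorphism by virtue of the highest-weight classification. A fortiori $\mathcal F$ itself is injective.

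Finally I would handle the right-hand vertical map. Since $K$ is connected, Proposition \ref{prop:connecteddomain} applied to $(\liel,L\cap K)$ makes $K_{\rm fd}(\liel,L\cap K)$ an integral domain, so localization at any nonzero element is a monomorphism. It remains to check that $W_\lieq \neq 0$: passing further to $\lieh$, the image of $W_\lieq$ under $K_{\rm fd}(\liel, L \cap K) \to K_{\rm fd}(\lieh) \cong \ZZ[\lieh^*]$ factors, via \eqref{eq:wformula}, as the nonzero product $\prod_{\alpha \in \Delta(\lieu,\lieh)}(1 - e^{-\alpha})$ over the $\lieh$-weights of $\lieu$ (and coincides with the classical Weyl denominator expression of Corollary \ref{cor:denominator} when combined with the complementary nilradical inside $\liel$). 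Combining these ingredients, $c_\lieq$ is the composition of two injective ring homomorphisms and is therefore itself an injective ring homomorphism, completing the proof. The most delicate step is the nonvanishing of $W_\lieq$; everything else follows directly from Propositions \ref{prop:restriction} and \ref{prop:connecteddomain}, together with Cartan's classical injectivity result.
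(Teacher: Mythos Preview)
Your proposal is correct and follows essentially the same approach as the paper: both use the displayed commutative square, invoke Proposition~\ref{prop:restriction} for commutativity and the second assertion, appeal to Cartan's injectivity for $\mathcal F$, and use Proposition~\ref{prop:connecteddomain} to make the localization map injective. Your write-up is slightly more explicit than the paper's in that you spell out why $\mathcal F$ is injective (via further restriction to a Cartan) and verify $W_\lieq\neq 0$; the paper leaves both of these implicit.
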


Suppose that $\liep$ is a parabolic subpair of $(\lieg,K)$ contained in $\lieq$, with Levi decomposition $\liep=\liem+\lien$. Then the obvious relation
$$
X|_{\liem,M\cap K}\;=\;(X|_{\liel,L\cap K})|_{\liem,M\cap L}
$$
translates to the identity
\begin{equation}
c_{\liep}\;=\;c_{\liep\cap\liel}\circ c_{\lieq},
\label{eq:fdres}
\end{equation}
where $c_{\liep\cap\liel}$ is interpreted as a map
$$
C_{\lieq,\rm fd}(\liel,L\cap K)\;\to\;C_{\liep,\rm fd}(\liem,M\cap K),
$$
which is meaningful thanks to Corollary \ref{cor:addloc}.

The identity \ref{eq:fdres} enables us to study branching problems of finite-dimensional modules via algebraic characters. A remarkable fact is that this identity remains true in general, not only the finite-dimensional case, as we will see shortly.

Another identity is related to duality. As taking duals is an exact functor, it induces an additive map
$$
\cdot^\vee:\;\;\;K_{\rm fd}(\lieg,K)\;\to\;K_{\rm fd}(\lieg,K).
$$
Now for any $X,Y\in\mathcal C_{\rm fd}(\lieg,K)$ we have a (non-canonical) isomorphism
$$
X^\vee\otimes_\CC Y^\vee\;\cong\;(X\otimes_\CC Y)^\vee,
$$
which means that dualization is multiplicative and hence a ring isomorphism on Grothendieck groups. The same is true for the Levi pair, and due to its multiplicativity dualization induces a well defined ring automorphism
$$
\cdot^\vee:\;\;\;C_{\lieq,\rm fd}(\liel,L\cap K)\;\to\;C_{\lieq,\rm fd}(\liel,L\cap K).
$$
As taking duals commutes obviously with restrictions, we get the identity
\begin{equation}
c_{\liep}(X^\vee)\;=\;c_{\lieq}(X)^\vee.
\label{eq:fddual}
\end{equation}
We will see that this identity also generalizes.

One may wonder why we pass to the localization in this setting at all, restriction seems to be good enough. The reason being that even here localization makes explicit calculations possible, as it allows explicit character formulae as in the classical

\begin{theorem}[Weyl character formula]
If $K$ is connected and the Levi factor $\liel$ of $\lieq=\liel+\lieu$ is a Cartan subalgebra, and if $V$ is an irreducible finite-dimensional representation of $(\lieg,K)$ of $\lieu$-hightest weight $\lambda\in\liel^*$, then
$$
c_\lieq(V)\;=\;
\frac
{\sum_{w\in W(\lieg,\liel)}(-1)^{\ell(w)}[\CC_{w(\lambda+\rho(\lieu))-\rho(\lieu)}]}
{\sum_{w\in W(\lieg,\liel)}(-1)^{\ell(w)}[\CC_{w(\rho(\lieu))-\rho(\lieu)}]}
$$
and we have the denominator formula
$$
\sum_{w\in W(\lieg,\liel)}(-1)^{\ell(w)}[\CC_{w(\rho(\lieu))-\rho(\lieu)}]
\;\;\;=\!\!
\prod_{\alpha\in\Delta(\lieu,\liel)}({\bf1}-[\CC_{-\alpha}]).
$$
\end{theorem}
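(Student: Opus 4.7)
The plan is to combine the tautological identity $c_\lieq(V) = H_\lieq(V)/W_\lieq$ coming straight from the definition of the algebraic character with the Kostant--Bott computation of $\lieu$-cohomology (Theorem \ref{thm:kostant}). First, since $\liel$ is a Cartan subalgebra, Theorem \ref{thm:kostant} gives the $\liel$-module isomorphism
$$H^q(\lieu;V) \;\cong\; \bigoplus_{\substack{w\in W(\lieg,\liel)\\\ell(w)=q}} \CC_{w(\lambda+\rho(\lieu))-\rho(\lieu)}.$$
Passing to classes in $K_{\rm fd}(\liel,L\cap K)$ and summing with alternating signs over $q$ identifies $H_\lieq(V)$ with the numerator of the claimed formula. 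Applying the same theorem to the trivial module ${\bf 1}$ (or equivalently invoking Corollary \ref{cor:denominator} directly) identifies $W_\lieq = H_\lieq({\bf 1})$ with the denominator. Dividing inside $C_{\lieq,\rm fd}(\liel,L\cap K)$ then yields the character formula.

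For the denominator identity I would instead exploit the other available expression for $W_\lieq$, namely \eqref{eq:wformula}. Since $\liel$ is Cartan, $\lieu^* \cong \bigoplus_{\alpha\in\Delta(\lieu,\liel)} \CC_{-\alpha}$ as an $\liel$-module, and the standard decomposition of the exterior algebra of a direct sum of lines yields
$$\bigwedge^q\lieu^* \;\cong\; \bigoplus_{\substack{S\subseteq\Delta(\lieu,\liel)\\|S|=q}} \bigotimes_{\alpha\in S}\CC_{-\alpha}.$$
Passing to the Grothendieck ring (where direct sums turn into sums and tensor products of finite-dimensional modules turn into products of classes) and summing over $q$ with signs gives the Koszul-type identity
$$W_\lieq \;=\; \sum_{q=0}^{\dim\lieu}(-1)^q\bigl[\bigwedge^q\lieu^*\bigr] \;=\; \prod_{\alpha\in\Delta(\lieu,\liel)}\bigl({\bf 1}-[\CC_{-\alpha}]\bigr).$$
Combined with the alternative evaluation $W_\lieq = \sum_w(-1)^{\ell(w)}[\CC_{w(\rho(\lieu))-\rho(\lieu)}]$ already obtained above, this is precisely the asserted denominator formula.

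No serious obstacle is expected: Theorem \ref{thm:kostant} does essentially all of the work, and $c_\lieq$ is defined to be exactly the ratio whose pieces Kostant and Corollary \ref{cor:denominator} evaluate. The only step requiring even modest care is recognising that the Koszul expansion $\sum_q(-1)^q[\bigwedge^q\lieu^*] = \prod_\alpha({\bf 1}-[\CC_{-\alpha}])$ is a legitimate identity in $K_{\rm fd}(\liel,L\cap K)$; this is standard because all modules involved are finite-dimensional, so that the additivity of $[\cdot]$ on short exact sequences and its multiplicativity on tensor products (cf.\ the construction of the multiplicative structure on $K_{\rm fd}$) both apply without difficulty.
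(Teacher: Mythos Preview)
Your proof is correct and follows essentially the same approach as the paper: both derive the character formula directly from the definition $c_\lieq(V)=H_\lieq(V)/W_\lieq$ together with Kostant's Theorem \ref{thm:kostant} and its Corollary \ref{cor:denominator}, and both obtain the denominator formula by combining the Kostant expression for $W_\lieq$ with the Koszul-type expansion of $\sum_q(-1)^q[\bigwedge^q\lieu^*]$ coming from the decomposition of $\lieu^*$ into one-dimensional weight spaces.
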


\begin{proof}
It is an immediate consequence of the definition of $c_\lieq$ and Kostant's Theorem, i.e.\ Theorem \ref{thm:kostant} and its Corollary \ref{cor:denominator}, together with the isomorphism \eqref{eq:tensorcomplex}, which tells us that
$$
H_\lieq({\bf1})\;=\;
\sum_{q=0}^{\dim\lieu}(-1)^q[\bigwedge^q\lieu^*]\;=\;
\prod_{\alpha\in\Delta(\lieu^*,\liel)}({\bf1}-[\CC_\alpha]).
$$
The last identity is a consequence of the isomorphism
$$
\bigwedge^q\lieu^*\;=\;
\bigwedge^q\bigoplus_{\alpha\in\Delta(\lieu^*,\liel)}\CC_\alpha\;\cong\;
\bigoplus_{\substack{A\subseteq\Delta(\lieu^*,\liel)\\\#A=q}}\bigotimes_{\alpha\in A}\CC_\alpha.
$$
\end{proof}

\subsection{Duality and Transitivity}

In this section we study the generalizations of \eqref{eq:fdres} and \eqref{eq:fddual}.

For transitivity and restrictions we consider the following setup. Fix an inclusion $i:(\lieh,N)\to(\lieg,K)$ of reductive pairs, and let $\liep\subseteq\lieh$ resp.\ $\lieq\subseteq\lieg$ be germane parabolic subpairs with $\liep\subseteq\lieq$ and compatible Levi decompositions $\liep=\liem+\lien$ resp.\ $\lieq=\liel+\lieu$, i.e.\ $\liem\subseteq\liel$. Note that then $\liep\cap\liel$ is germane parabolic in the reductive pair $(\lieh\cap\liel,N\cap L)$. Assume that for $?,!\in\{\rm df, \rm a, \rm fl, \rm fd\}$ restriction along $i$ defines functors $\mathcal F:\mathcal C_?(\lieg,K)\to\mathcal C_!(\lieh,N)$ and similarly $\mathcal F:\mathcal C_?(\liel,L\cap K)\to\mathcal C_!(\lieh\cap\liel,N\cap L)$. Then both forgetful functors descend to Grothendieck groups and respective localizations.

\begin{proposition}[Transitivity]\label{prop:transitivity}
In the setup above, we have the identity
$$
c_{\liep}\circ\mathcal F\;=\;
c_{\liep\cap\liel}\circ\mathcal F\circ c_{\lieq}\;\;\in\;\;
C_\liep(\liem,M\cap N)[W_\liep/W_\lieq|_{\liem,M\cap N}].
$$ 
\end{proposition}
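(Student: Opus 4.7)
Plan: The inclusion $\liep\subseteq\lieq$ with $\liem\subseteq\liel$ forces $\lieu\subseteq\lien$ (the smaller parabolic has the larger nilradical), and since $\lieu$ is an ideal in $\lieq\supseteq\lien$ it is in particular an ideal in $\lien$. The composition $\lien\hookrightarrow\lieq\twoheadrightarrow\liel$ is a Lie algebra homomorphism with kernel $\lieu$ and image the nilradical $\lien\cap\liel$ of $\liep\cap\liel$ in $(\lieh\cap\liel,N\cap L)$, so there is an $(\liem,M\cap N)$-equivariant short exact sequence of Lie algebras
$$0\to\lieu\to\lien\to\lien\cap\liel\to 0.$$

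The first key input is the Hochschild--Serre spectral sequence attached to this extension, with coefficients in $X|_\lieh$ for $X\in\mathcal C_?(\lieg,K)$. Its $E_2$-page equals $H^p(\lien\cap\liel;H^q(\lieu;X))$ and converges to $H^{p+q}(\lien;X|_\lieh)$; Theorem \ref{thm:inheritance} keeps all terms inside $\mathcal C_?$, and invariance of Euler characteristics in the spectral sequence then yields
$$H_\liep(X|_\lieh)\;=\;H_{\liep\cap\liel}(\mathcal F(H_\lieq(X)))\qquad\text{in }K_?(\liem,M\cap N).$$
The second key input is a matching Weyl-element factorization. Dualizing the sequence above and passing to exterior algebras produces a filtration on $\bigwedge^n\lien^*$ whose associated graded is $\bigoplus_{p+q=n}\bigwedge^p(\lien\cap\liel)^*\otimes\bigwedge^q\lieu^*$; alternating sums and \eqref{eq:wformula} give
$$W_\liep\;=\;W_{\liep\cap\liel}\cdot W_\lieq|_{\liem,M\cap N}\qquad\text{in }K_{\rm fd}(\liem,M\cap N).$$
Consequently $W_\liep$, $W_{\liep\cap\liel}$ and $W_\lieq|_\liem$ are all units in the target ring $C_\liep(\liem,M\cap N)[W_\liep/W_\lieq|_{\liem,M\cap N}]$.

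To combine the two ingredients I extend $c_{\liep\cap\liel}$ across the localization at $\mathcal F(W_\lieq)$. Since $W_\lieq$ is a finite-dimensional class, Proposition \ref{prop:restriction} gives $c_{\liep\cap\liel}(\mathcal F(W_\lieq))=W_\lieq|_\liem$, and an Euler-characteristic computation on the chain complex $\bigwedge^\bullet(\lien\cap\liel)^*\otimes A\otimes Z$ (using the $K_{\rm fd}$-module structure on $K_?$) yields the restricted multiplicativity $c_{\liep\cap\liel}(A\cdot Z)=c_{\liep\cap\liel}(A)\cdot[Z|_\liem]$ for every finite-dimensional $Z$, which is enough to push $c_{\liep\cap\liel}$ past the finite-dimensional denominator $\mathcal F(W_\lieq)^k$. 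Then, in the target ring,
$$c_{\liep\cap\liel}(\mathcal F(c_\lieq(X)))\;=\;\frac{c_{\liep\cap\liel}(\mathcal F(H_\lieq(X)))}{W_\lieq|_\liem}\;=\;\frac{H_{\liep\cap\liel}(\mathcal F(H_\lieq(X)))}{W_{\liep\cap\liel}\cdot W_\lieq|_\liem}\;=\;\frac{H_\liep(X|_\lieh)}{W_\liep}\;=\;c_\liep(\mathcal F(X)),$$
by invoking successively the multiplicative extension, the definition of $c_{\liep\cap\liel}$, the spectral-sequence identity, the Weyl factorization, and the definition of $c_\liep$. The identity on classes of modules propagates to all of $K_?(\lieg,K)$ by additivity.

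The main obstacle will be verifying that every cohomology that appears genuinely lies in $\mathcal C_?$ on both reductive pairs involved -- a bookkeeping exercise relying uniformly on Theorem \ref{thm:inheritance} together with the restriction hypotheses built into the setup. A secondary subtlety is that $\liep\cap\liel$ need not be a Borel, so Proposition \ref{prop:addmult} does not directly supply full multiplicativity; however, the restricted multiplicativity against finite-dimensional classes established above is precisely what is needed, because we only localize at the finite-dimensional element $\mathcal F(W_\lieq)$.
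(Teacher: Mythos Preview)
Your very first step is incorrect: the inclusion $\lieu\subseteq\lien$ that you assert does \emph{not} hold in general. You seem to be reasoning as if $\liep$ and $\lieq$ were parabolic subalgebras of the \emph{same} Lie algebra, in which case indeed the smaller parabolic carries the larger nilradical. But here $\liep\subseteq\lieh$ while $\lieq\subseteq\lieg$, and the inclusion of reductive pairs $i:(\lieh,N)\to(\lieg,K)$ is typically proper. The nilradical $\lieu$ of $\lieq$ lives in $\lieg$ and will in general contain directions not belonging to $\lieh$ at all, hence not to $\lien\subseteq\lieh$. A minimal example: take $\lieg=\liegl_3$, $\lieh=\liegl_2$ embedded in the upper-left block, both $\lieq$ and $\liep$ the respective upper-triangular Borels. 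Then $\lieu$ is three-dimensional while $\lien$ is one-dimensional, and $\lieu\not\subseteq\lien$; rather $\lien=\lieu\cap\lieh\subseteq\lieu$. Consequently your short exact sequence $0\to\lieu\to\lien\to\lien\cap\liel\to 0$ is ill-posed, and the single Hochschild--Serre argument built on it collapses.

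The paper's proof handles exactly this asymmetry by inserting the intermediate parabolic $\liep':=\lieq\cap\lieh\subseteq\lieh$ with Levi decomposition $\liep'=\liel'+\lien'$, where $\liel'=\liel\cap\lieh$ and $\lien'\subseteq\lien$. Two Hochschild--Serre spectral sequences are then used: one for the chain $\liep\cap\liel\subseteq\liep\subseteq\liep'$ inside $\lieh$ (giving $H_\liep=H_{\liep\cap\liel}\circ H_{\liep'}$), and a second one for $\liep'\subseteq\lieq$ across the inclusion $\lieh\subseteq\lieg$, which produces an extra factor $\sum_q(-1)^q[\bigwedge^q(\lieu/\lien')^*]$ accounting precisely for the part of $\lieu$ lying outside $\lieh$. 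Your argument is essentially correct in the special case $\lieh=\lieg$ (where $\mathcal F$ is the identity), but to obtain the full statement you must bridge the gap between $\lieh$ and $\lieg$ via an intermediate step of this kind.
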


There are two extreme cases. First $i$ may be the identity map, then we may choose $?=\,!$ and $\mathcal F$ become the identity functors. Then the statement reduces is equivalent to the (generalization of the) transitivity relation \eqref{eq:fdres} that we saw in the finite-dimensional case before. Secondly, if $i$ is not the identity, and $\lieh=\liel$ say, then Proposition \ref{prop:transitivity} boils down to Proposition \ref{prop:restriction}. In particular if $\lieh\cap\liel=\liem$, then Proposition \ref{prop:transitivity} tells us that characters commute with restrictions.

\begin{proof}
We'll put us in a position where we can apply a Hochschild-Serre spectral sequence. To this point we introduce the germane parabolic
$$
\liep'\;:=\;\lieq\cap\lieh\;\subseteq\;\lieh.
$$
It has a Levi decomposition $\liep'=\liel'+\lien'$ with $\liel'=\liel\cap\lieh$ and $\lien'\subseteq\lien$. In particular
$$
\lien\;=\;(\lien\cap\liel)\oplus\lien'\;=\;(\lien\cap\liel')\oplus\lien'.
$$
The Hochschild-Serre spectral sequence for the inclusion $\liel'\subseteq\lieh$ and the parabolic subalgebras $\liep\cap\liel\subseteq\liep\subseteq\liep'$ gives us the relation
\begin{equation}
H_\liep(X)\;=\;H_{\liep\cap\liel}\circ H_{\liep'}(X)\;\;\in\;\; K_!(\liem,M\cap N),
\label{eq:pprime}
\end{equation}
for all $X\in K_!(\lieh,N)$.

Now we can also apply the Hochschild-Serre spectral sequence to the inclusion $\lieh\subseteq\lieg$ and the parabolic algebras $\liep'\subseteq\lieq$. From this we get
$$
\sum_{q=0}^{\dim\lieu/\lien'}[\bigwedge^q(\lieu/\lien')^*]\cdot
H_{\liep'}\circ\mathcal F(X)\;=\;\mathcal F\circ H_{\lieq}(X)\;\;\in\;\; K_?(\liel',L'\cap K),
$$
for all $X\in K_?(\lieg,K)$. The sum on the left is the quotient of the Weyl denominators $W_\lieq|_{\liel',L'\cap K}$ and $W_\liep'$. We conclude that
\begin{equation}
H_{\liep'}\circ\mathcal F(X)\cdot W_\lieq|_{\liel',L'\cap K}\;=\;
\mathcal F\circ H_{\lieq}(X)\cdot W_{\liep'}
\;\;\in\;\; K_?(\liel',L'\cap K),
\label{eq:wpq}
\end{equation}
Plugging \eqref{eq:pprime} and \eqref{eq:wpq} together we obtain
$$
H_\liep\circ\mathcal F(X)\cdot W_\lieq|_{\liem,M\cap N}\;=\;
H_{\liep\cap\liel}\circ H_{\liep'}\circ\mathcal F(X)\cdot W_\lieq|_{\liem,M\cap N}\;=\;
$$
$$
H_{\liep\cap\liel}\circ \mathcal F\circ H_{\lieq}(X)\cdot W_{\liep'}|_{\liem,M\cap N}
\;\;\in\;\; K_!(\liem,M\cap N),
$$
for all $X\in K_?(\lieg,K)$, where we implicitly exploited the multiplicativity relation
$$
H_{\liep\cap\liel}((-)\cdot W_\lieq|_{\liel',L'\cap N})\;=\;
H_{\liep\cap\liel}(-)\cdot W_\lieq|_{\liem,M\cap N}.
$$
Finally
$$
W_\liep\;=\;W_{\liep\cap\liel}\cdot W_{\liep'}|_{\liem,M\cap N}
$$
concludes the proof.
\end{proof}

Our hypotheses for Proposition \ref{prop:transitivity} are stricter than necessary. The assumption that $\mathcal F$ be defined on the entire Grothendieck groups may be weakened. Not only other Grothendieck groups may be considered, but also suitable subgroups of those groups. This is made precise by the notion of {\em admissible quadruple} in \cite{januszewskipreprint}, which allows a more general construction of characters. However the methods and main arguments are still the same as the ones here.

There are two important consequences of Proposition \ref{prop:transitivity} that are also reflected in its proof. The first is the transitivity relation \eqref{eq:fdres}, which reduces many properties of characters, including their explicit calculation, to the case of maximal parabolic subalgebras. The second is the compatibility with restrictions, which is easiest expressed in the case $\liep\cap\liel=\liem$, which amounts to the commutativity of the square
$$
\begin{CD}
K_?(\lieg,K)@>\mathcal F>> K_!(\lieh,N)\\
@V{c_\lieq}VV @V{c_\liep}VV\\
C_?(\liel,L\cap K)@>\mathcal F>>C_!(\liem,M\cap N)[W_\liep/W_\lieq|_{\liem,M\cap N}]
\end{CD}
$$
We remark that by our assumptions we have in this situation $\lien\subseteq\lieu$, which means that we get a natural isomorphism
$$
C_!(\liem,M\cap N)[W_\liep/W_\lieq|_{\liem,M\cap N}]\;\to\;K_!(\liem,M\cap N)[W_\lieq|_{\liem,M\cap N}^{-1}].
$$
In particular for a fixed reductive pair $(\liem,M\cap N)$ we are led to consider many different localizations, depending on the branching problem at hand. Therefore we are naturally led to study the problem of localization in general.

The main issue being that $c_\liep$ (and even $c_\lieq$) in the above diagram can be far from injective. One reason is the possible vanishing of cohomology. The other is the non-triviality of the kernel of the localization map. We will see that there is a connection between these two seemingly independent situations in our treatment of Blattner formulae.

The third important consequence of Proposition \ref{prop:transitivity} resp.\ its proof is
\begin{corollary}\label{cor:constructiblefinitelength}
Let $\lieq=\liel+\lieu\subseteq\lieg$ be constructible. Then $\lieu$-cohomology preserves finite length.
\end{corollary}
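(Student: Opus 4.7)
The plan is to reason by induction on the length $r$ of a construction chain
$$
\lieq = \lieq_r \subseteq \lieq_{r-1} \subseteq \cdots \subseteq \lieq_0 = \lieg
$$
witnessing that $\lieq$ is constructible, with Levi decompositions $\lieq_i = \liel_i + \lieu_i$ arranged compatibly so that $\liel_{i+1}\subseteq\liel_i$ for every $i$. The base cases $r\leq 1$ fall under the real or $\theta$-stable instances of Theorem \ref{thm:inheritance}(iv), because when $r=1$ one has $\lieq = \liel_0\cap\lieq_1$, which is real or $\theta$-stable in $\lieg$ by hypothesis, and these cases are already established (Hecht--Schmid for the real case; items (ii) and (iii) of Theorem \ref{thm:inheritance} combined with Kostant's Theorem \ref{thm:kostant} for the $\theta$-stable case).

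For the inductive step, I would re-deploy the Hochschild--Serre machinery used in the proof of Proposition \ref{prop:transitivity}. Apply it to the inclusion of reductive pairs $(\liel_{r-1}, L_{r-1}\cap K)\hookrightarrow(\lieg,K)$, with germane parabolics $\lieq\subseteq\lieg$ and $\lieq\cap\liel_{r-1}\subseteq\liel_{r-1}$, and with intermediate parabolic $\lieq_{r-1}\supseteq\lieq$ (whose Levi is $\liel_{r-1}$ and nilpotent radical $\lieu_{r-1}$) playing the role of $\lieq'$ in the discussion following \eqref{eq:e1term}. This yields a bounded, convergent spectral sequence of $(\liel, L\cap K)$-modules
$$
E_2^{p,q} \;=\; H^p\!\bigl(\lieu\cap\liel_{r-1};\, H^q(\lieu_{r-1}; X)\bigr)\;\Longrightarrow\; H^{p+q}(\lieu; X).
$$

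Now the induction does the rest. By the inductive hypothesis applied to $\lieq_{r-1}$ (constructible via the shorter chain $\lieq_{r-1}\subseteq\cdots\subseteq\lieq_0$), each $H^q(\lieu_{r-1}; X)$ is a finite length $(\liel_{r-1}, L_{r-1}\cap K)$-module. Since $\liel_{r-1}\cap\lieq$ is a real or $\theta$-stable parabolic of the reductive pair $(\liel_{r-1}, L_{r-1}\cap K)$ by the very definition of constructibility, the base case applied inside $(\liel_{r-1}, L_{r-1}\cap K)$ tells us that $H^p(\lieu\cap\liel_{r-1};\,-)$ preserves finite length. Hence every $E_2^{p,q}$ is of finite length. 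As $\lieu$ is finite-dimensional the spectral sequence is concentrated in a bounded region, and finite length is stable under subquotients and finite extensions in $\mathcal{C}(\liel, L\cap K)$; so each subsequent page, each $E_\infty^{p,q}$, the associated graded of the abutment, and finally $H^{p+q}(\lieu; X)$ itself, are all of finite length.

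The only real friction lies in bookkeeping: one has to verify that the Levi factors $\liel_i$ can indeed be chosen compatibly along the chain so that $\liel = \liel_r \subseteq \liel_{r-1}$ and so that the Hochschild--Serre setup of Proposition \ref{prop:transitivity} applies verbatim at each step. This compatibility is implicit in the definition of constructibility but must be fixed at the outset; once it is arranged, the proof is a direct iteration of the spectral sequence input already exploited in the proof of Proposition \ref{prop:transitivity}.
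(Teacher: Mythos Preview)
Your argument is correct and is precisely the approach the paper indicates: the proof of Theorem \ref{thm:inheritance}(iv) already says the general constructible case follows from the real and $\theta$-stable base cases ``via the Hochschild--Serre spectral sequence,'' and the paper repeats this by placing the corollary as a direct consequence of the proof of Proposition \ref{prop:transitivity}. Your write-up simply makes the induction on the length of the construction chain and the use of the $E_2$-page explicit, which is exactly the intended content; the only cosmetic point is that the $\theta$-stable base case goes through (ii) and (iii) of Theorem \ref{thm:inheritance} together with Proposition \ref{prop:finitelength} (Kostant's Theorem enters only indirectly, in the proof of (iii)).
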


As for the duality, we observe that the isomorphism \eqref{eq:tensorcomplex} applied to the trivial module $X={\bf1}=\CC$, together with Poincar\'e duality gives
$$
(-1)^{\dim\lieu}[\bigwedge^{\dim\lieu}\lieu^*]\cdot
\sum_{q=0}^{\dim\lieu}[\bigwedge^q\lieu]\;=\;
\sum_{q=0}^{\dim\lieu}[\bigwedge^q\lieu^*].
$$
The sum on the left hand side computes cohomology of ${\bf1}$, and the right hand side computes its cohomology. We deduce that
$$
W^\lieq\;:=\;\sum_{q=0}^{\dim\lieu}(-1)^qH_q(\lieu;{\bf1})\;\in\;K_{\rm fd}(\liel,L\cap K)
$$
acts invertibly (via multiplication) in a given $K_{\rm fd}(\liel,L\cap K)$-module $M$ if and only if $W_\lieq$ acts invertibly. By Proposition \ref{prop:easyduality} or direct inspection
$$
W_\lieq^\vee\;=\;W^\lieq.
$$
Therefore
\begin{equation}
(-1)^{\dim\lieu}[\bigwedge^{\dim\lieu}\lieu^*]\cdot
H_{\lieq}({\bf1})^\vee\;=\;
H_{\lieq}({\bf1}).
\label{eq:trivialcdual}
\end{equation}
This justifies that dualization extends to localizations, and we may formulate
\begin{proposition}[Duality]\label{prop:duality}
Let $X,X^\vee\in\mathcal C_?(\lieg,K)$ be two admissible modules
and assume $\lieq$ to be a $\theta$-stable Borel.
Then we have the identity
$$
c_\lieq(X^\vee)\;=\;c_\lieq(X)^\vee
$$
in $C_{\lieq,?}(\liel,L\cap K)$, where on the left hand side $\cdot^\vee$ denotes the $K$-finite dual, and on the right hand side the $(L\cap K)$-finite dual.
\end{proposition}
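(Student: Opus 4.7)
The plan is to unwrap both sides of the claimed identity and match them using Poincaré duality (Corollary~\ref{cor:duality}) together with the calibration identity \eqref{eq:trivialcdual} that was already established for the trivial module. Write $T := \bigwedge^{\dim\lieu}\lieu^*$. Since $\lieu$ is nilpotent, $\ad(u)$ is nilpotent on $\lieu$ for every $u \in \lieu$, so $\lieu$ acts by zero on the one-dimensional top wedge $T$ and on $T^\vee$. Consequently, tensoring with $T^\vee$ commutes with $\lieu$-cohomology, and one has an $(\liel,L\cap K)$-isomorphism $(X\otimes_\CC T)^\vee \cong X^\vee\otimes_\CC T^\vee$ as $(\lieq,L\cap K)$-modules. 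Plugging this into Corollary~\ref{cor:duality} gives
$$
H^q(\lieu;X^\vee)\otimes_\CC T^\vee \;\cong\; H^{\dim\lieu-q}(\lieu;X)^\vee,
$$
equivalently, $H^q(\lieu;X^\vee) \cong H^{\dim\lieu-q}(\lieu;X)^\vee \otimes_\CC T$.

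Next I would pass to the alternating sum. Summing the displayed isomorphism with signs and re-indexing $p := \dim\lieu - q$ yields, in $K_?(\liel,L\cap K)$,
$$
H_\lieq(X^\vee)\;=\;\sum_q (-1)^q [H^{\dim\lieu-q}(\lieu;X)^\vee]\cdot[T]\;=\;(-1)^{\dim\lieu}\,[T]\cdot H_\lieq(X)^\vee.
$$
Admissibility of both $X$ and $X^\vee$ and $\theta$-stability of $\lieq$ are used here to ensure that all modules in sight lie in $\mathcal{C}_?(\liel,L\cap K)$, cf.\ Theorem~\ref{thm:inheritance}(iii), so the identity is meaningful in the Grothendieck group. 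From the definition we then compute
$$
c_\lieq(X^\vee)\;=\;\frac{H_\lieq(X^\vee)}{W_\lieq}\;=\;\frac{(-1)^{\dim\lieu}\,[T]\cdot H_\lieq(X)^\vee}{W_\lieq},
$$
while, by the very definition of dualization on the localization,
$$
c_\lieq(X)^\vee\;=\;\frac{H_\lieq(X)^\vee}{W_\lieq^\vee}\;=\;\frac{H_\lieq(X)^\vee}{W^\lieq}.
$$

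The last step is to verify that the two fractions coincide, which is precisely where \eqref{eq:trivialcdual} enters: applied to the trivial module it reads $(-1)^{\dim\lieu}[T]\cdot W^\lieq = W_\lieq$ (since $W_\lieq = H_\lieq({\bf1})$ and $W^\lieq = W_\lieq^\vee = H_\lieq({\bf1})^\vee$). Multiplying numerator and denominator of the expression for $c_\lieq(X^\vee)$ by $W^\lieq$ and invoking this equality gives
$$
c_\lieq(X^\vee)\;=\;\frac{(-1)^{\dim\lieu}[T]\cdot W^\lieq\cdot H_\lieq(X)^\vee}{W_\lieq\cdot W^\lieq}\;=\;\frac{H_\lieq(X)^\vee}{W^\lieq}\;=\;c_\lieq(X)^\vee,
$$
as desired.

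The main obstacle I anticipate is purely bookkeeping: one must verify that dualization, originally defined on $K_{\rm fd}(\liel,L\cap K)$, really extends to the localization $C_{\lieq,?}(\liel,L\cap K)$. This rests on the observation that $W_\lieq$ and its dual $W^\lieq$ differ by the invertible class $(-1)^{\dim\lieu}[T]$ of a one-dimensional module (which is a unit in $K_{\rm fd}(\liel,L\cap K)$), so that inverting $W_\lieq$ is equivalent to inverting $W^\lieq$ and the involution $\cdot^\vee$ descends. Once this is in place the argument is formal, with the substantive content bundled into Corollary~\ref{cor:duality} and the nilpotency of $\lieu$.
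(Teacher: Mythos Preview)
Your argument has a genuine gap that happens to be exactly the ``crucial point'' the paper's proof is designed to address: the distinction between the $K$-finite dual and the $(L\cap K)$-finite dual.

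Recall that $X^\vee$ for a $(\lieg,K)$-module means the $K$-finite dual $X^{\vee,G}$, whereas the dual appearing in Proposition~\ref{prop:easyduality} (and hence in Corollary~\ref{cor:duality}) is the $(L\cap K)$-finite dual, since those statements are formulated for $(\lieq,L\cap K)$-modules. Thus Corollary~\ref{cor:duality} gives you
\[
H^q\bigl(\lieu;(X\otimes_\CC T)^{\vee,L}\bigr)\;\cong\;H^{\dim\lieu-q}(\lieu;X)^{\vee,L},
\]
and unwinding the tensor (as you do) yields $H^q(\lieu;X^{\vee,L})\cong H^{\dim\lieu-q}(\lieu;X)^{\vee,L}\otimes_\CC T$. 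But the left-hand side of the proposition involves $H^q(\lieu;X^{\vee,G})$, the cohomology of the \emph{$K$-finite} dual. In general $X^{\vee,G}\subsetneq X^{\vee,L}$ as $(\lieq,L\cap K)$-modules, and there is no formal reason for this inclusion to induce an isomorphism on $\lieu$-cohomology. Your computation therefore proves the identity for the wrong object.

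The paper's proof confronts this head-on: it shows that the inclusion $X^{\vee,G}\hookrightarrow X^{\vee,L}$ does induce an isomorphism on $\lieu$-cohomology, by analyzing the cokernels of the biduality maps via a diagram chase, reducing to the compact pair $(\liek,K)$, and invoking Kostant's Theorem together with the Hochschild-Serre spectral sequence. This is where the admissibility hypothesis and the $\theta$-stability of $\lieq$ are genuinely used (reflexivity of $X$ and of $H^q(\lieu;X)$, and the existence of the relevant spectral sequence). Once that isomorphism is established, the remainder of the argument is indeed the formal manipulation with \eqref{eq:trivialcdual} that you wrote down; your bookkeeping from that point on is correct.
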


The restriction to finiteness seems necessary, as our proof relies on the reflexivity on $X$. For finite-length modules we obtain the same statement along the lines of the proof of Theorem \ref{thm:independence} below.

\begin{proof}
The crucial point is the comparison between the $K$-finite and the $L\cap K$-finite duals. Therefore we write $X^{\vee,G}$ for the $K$-finite and $X^{\vee,L}$ for the $L\cap K$-finite duals respectively. Our goal is to show that the inclusion
$$
X^{\vee,G}\;\to\;
X^{\vee,L}
$$
induces an isomorphism
\begin{equation}
H^q(\lieu; X^{\vee,G})\cong
H^q(\lieu; X^{\vee,L})
\label{eq:gldualmap}
\end{equation}
in cohomology. Then the claim of the proposition will follow from Poincar\'e duality. The biduality maps induce short exact sequences
\begin{equation}
\begin{CD}
0@>>> X@>\nu_G>> (X^{\vee,G})^{\vee,G}@>>> 0 @>>> 0
\end{CD}
\label{eq:xgbidual}
\end{equation}
\begin{equation}
\begin{CD}
0@>>> X@>\nu_{GL}>> (X^{\vee,G})^{\vee,L}@>>> Y @>>> 0
\end{CD}
\label{eq:xglbidual}
\end{equation}
\begin{equation}
\begin{CD}
0@>>> X@>\nu_L>> (X^{\vee,L})^{\vee,L}@>>> N @>>> 0
\end{CD}
\label{eq:xlbidual}
\end{equation}
of $(\lieq,L\cap K)$-modules. We also have a short exact sequence
\begin{equation}
\begin{CD}
0@>>> Z@>>> (X^{\vee,L})^{\vee,L}@>>> (X^{\vee,G})^{\vee,L} @>>> 0.
\end{CD}
\label{eq:xgldual}
\end{equation}
Plugging these sequences together, we obtain a commutative diagram
$$
\begin{CD}
X@>\nu_{GL}>> (X^{\vee,G})^{\vee,L}@>>> Y\\
@AAA @A\eta AA @AAA\\
X@>\nu_L>> (X^{\vee,L})^{\vee,L}@>>> N\\
@AAA @AAA @AAA\\
 0@>>> Z@>>> Z\\
\end{CD}
$$
with short exact sequences in the rows and columns. The exactness of the last row resp.\ last column follows from the snake lemma.

By construction of the Poincar\'e duality map in cohomology (i.e.\ Corollary \ref{cor:duality}) the map $\nu_L$ induces the biduality map
$$
H^q(\lieu;X)\;\to\;H^q(\lieu;X)^{\vee,\vee}
$$
on cohomology, which is an isomorphism as $H^q(\lieu;-)$ preserves admissibility by Theorem \ref{thm:inheritance}. Then the long exact sequence of cohomology tells us that the $\lieu$-cohomology of $N$ vanishes in all degrees, and therefore the long exact sequence for the right most column gives us isomorphisms
$$
H^q(\lieu; Y)\;\to\;H^{q+1}(\lieu; Z).
$$
In order to see that \eqref{eq:gldualmap} is an isomorphism, it suffices to show that the cohomology of $Y$ and $Z$ vanishes.

To this point we consider the same diagrams over the associated compact pairs. Duality is not affected by this restriction, and we obtain analogously and isomorphism
$$
H^q(\lieu\cap\liek; Y)\;\to\;H^{q+1}(\lieu\cap\liek; Z).
$$
Together with Kostant's Theorem this implies the vanishing of the $\lieu\cap\liek$-cohomology of $Y$ and $Z$ in the light of Proposition \ref{prop:kdecomp}. We have a Hochschild-Serre spectral sequence
$$
\bigwedge^p(\lieu\cap\liep)^*\otimes_\CC H^q(\lieu\cap\liek; Y)
\;\Longrightarrow\;H^{p+q}(\lieu; Y),
$$
where $\liep$ denotes the orthogonal complement to $\liek$ in $\lieg$. As the left hand side vanishes, the right hand side does so too, and we conclude the proof.
\end{proof}

The proof generalizes to arbitrary $\theta$-stable parabolics by appealing to the general version of Kostant's Theorem, cf.\ \cite{januszewskipreprint}.

\subsection{Linear independence}

In this section we assume that the associated Lie group $G$ to $(\lieg,K)$ is linear, which is a mild condition needed for Hecht-Schmid's result below. In order to proof that algebraic characters classify the semi-simplifications of finite-length modules we import the following two results.

The following result was conjectured by Osborne in his thesis \cite{osborne1972} for Borel algebras and generalized to the general real parabolic case by Casselman \cite{casselman1977}.

\begin{proposition}[Hecht-Schmid, \cite{hechtschmid1983}]\label{prop:osbornereal}
Let $\lieq\subseteq\lieg$ be a real parabolic subalgebra with Levi decomposition $\lieq=\liel+\lieu$. Then there is an open subset $L_0\subseteq L\subseteq G$ which contains the set of regular elements of $L$, and which maps surjectively onto $L^{\rm ad}$ under the adjoint map, such that for any finite-length $(\lieg,K)$-module $X$ Harish-Chandra's global characters for $G$ (cf.\ \cite{harishchandra1954b}) resp.\ $L$ satisfy the identity
$$
\Theta_{G}(X)|_{L_0}\;=\;
\frac{\sum_{q=0}^{\dim\lieu}(-1)^q\Theta_{L}(H^q(\lieu;X))|_{L_0}}
{\sum_{q=0}^{\dim\lieu}(-1)^q\Theta_{L}(H^q(\lieu;{\bf1}))|_{L_0}}.
$$
\end{proposition}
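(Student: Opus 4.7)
The plan is to verify the asserted identity as one between real-analytic class functions on the regular set $L_0$, where Harish-Chandra characters are represented by analytic functions. My first step is to observe that both sides are additive on short exact sequences of finite length $(\lieg,K)$-modules: the left side by additivity of $\Theta_G$, and the right side by combining the long exact sequence of $\lieu$-cohomology with additivity of $\Theta_L$, which is legitimate because the real $\lieq$ is constructible and Theorem \ref{thm:inheritance}(iv) ensures that $H^q(\lieu;-)$ preserves finite length. This reduces the claim to irreducible $X$.

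Next, I would invoke Casselman's subrepresentation theorem to realize an irreducible $X$ as a subquotient of a representation parabolically induced from a minimal real parabolic $P_{\min}=M_{\min}A_{\min}N_{\min}$. Together with transitivity of $\lieu$-cohomology (the Hochschild-Serre spectral sequence, whose character-level consequences are recorded in Proposition \ref{prop:transitivity}), this reduces the problem to the case where $\lieq$ is itself the minimal real parabolic. For such $\lieq$, Corollary \ref{cor:denominator} identifies the denominator with $\prod_{\alpha\in\Delta(\lien_{\min},\liea_{\min})}(1-e^{-\alpha})$, which is precisely the Jacobian factor appearing in the Weyl integration formula over conjugates of the split Cartan of $L$.

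The core analytic input is the Casselman-Osborne lemma combined with Casselman's theorem on asymptotic expansions of matrix coefficients: the leading $\liea_{\min}$-exponents of $\Theta_G(X)$ along the positive chamber $A_{\min}^+$ coincide, with multiplicities, with the generalized $\liea_{\min}$-weights of $H_0(\lien_{\min};X)$ shifted by $-\rho(\lien_{\min})$. Expanding the right-hand side in the same exponents via the geometric series $(1-e^{-\alpha})^{-1}=\sum_{n\geq 0}e^{-n\alpha}$ and matching term by term then establishes the identity as a convergent asymptotic formula on $A_{\min}^+$, and hence by Weyl-group invariance of both sides on any single Cartan of $L$.

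The hard part will be promoting this asymptotic identity on one chamber of one Cartan to the full pointwise identity on $L_0$. Hecht and Schmid handle this by a uniqueness argument for eigenfunctions of $Z(\lieg)$ with prescribed leading exponents, exploiting convergence of the character expansion on the regular set together with a patching across the finitely many conjugacy classes of Cartan subgroups of $L$; I would take their analytic treatment in \cite{hechtschmid1983} as a black box for this portion rather than reproduce it. Finally, $L_0$ is defined as the preimage in $L$ of the regular semisimple locus of $L^{\rm ad}$ intersected with the open neighbourhood of regular elements on which the character expansion converges; by construction it contains the set of regular elements of $L$ and surjects onto $L^{\rm ad}$.
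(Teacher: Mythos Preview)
The paper does not supply its own proof of this proposition: it is explicitly imported from \cite{hechtschmid1983} as one of the two external inputs needed for Theorem~\ref{thm:independence}. So there is nothing in the paper to compare your argument against; you are in effect sketching the Hecht--Schmid proof itself.

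Your outline is broadly faithful to their strategy (reduction to irreducibles, passage to the minimal parabolic, matching leading exponents of the character with the weights on $\lien$-homology, and then an analytic continuation/uniqueness argument), and you honestly flag that the decisive analytic step is taken from \cite{hechtschmid1983} as a black box. Two small imprecisions are worth correcting. First, your appeal to Corollary~\ref{cor:denominator} is misplaced: that corollary assumes the Levi factor is a Cartan subalgebra, whereas the Levi of a minimal real parabolic is $M_{\min}A_{\min}$ with $M_{\min}$ generally nonabelian. The product formula for the denominator you want follows instead directly from \eqref{eq:wformula} together with the root-space decomposition of $\lieu$. Second, in Hecht--Schmid the asymptotic matching is carried out with $\lien$-homology rather than cohomology; the translation to the cohomological statement in the proposition goes through Poincar\'e duality (Proposition~\ref{prop:hardduality}), which you should make explicit if you present the argument this way.
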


The set $L_0$ from Proposition \ref{prop:osbornereal} is large enough that the left hand side, and hence the right hand side, determines the restriction of $\Theta_G(X)$ to $L$ uniquely.

\begin{proposition}[Vogan, {\cite[Theorem 8.1]{vogan1979ii}}]\label{prop:osbornethetastable}
Let $\lieq\subseteq\lieg$ be a $\theta$-stable parabolic subalgebra with Levi decomposition $\lieq=\liel+\lieu$. Then for any finite-length $(\lieg,K)$-module $X$ Harish-Chandra's global characters for $G$ resp.\ $L$ satisfy the identity
$$
\Theta_{G}(X)|_{L}\;=\;
\frac{\sum_{q=0}^{\dim\lieu}(-1)^q\Theta_{L}(H^q(\lieu;X))}
{\sum_{q=0}^{\dim\lieu}(-1)^q\Theta_{L}(H^q(\lieu;{\bf1}))}.
$$
\end{proposition}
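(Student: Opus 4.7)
The plan is to establish the identity by reducing to a dense family of test modules via the Zuckerman translation principle, paralleling Hecht--Schmid's strategy from Proposition \ref{prop:osbornereal} but exploiting the sharper properties of $\theta$-stable cohomology.

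First, I would verify that both sides define well-defined invariant eigendistributions on $L$ that are additive in $X \in \mathcal{C}_{\rm fl}(\lieg,K)$. On the right, Theorem \ref{thm:inheritance}(iv) (applicable because $\theta$-stable parabolics are constructible) guarantees that each $H^q(\lieu;X)$ lies in $\mathcal{C}_{\rm fl}(\liel,L\cap K)$, so $\Theta_L(H^q(\lieu;X))$ makes sense as a Harish-Chandra distribution on $L$. The long exact cohomology sequence makes $\sum_q(-1)^q[H^q(\lieu;-)]$ additive on short exact sequences, and Kostant's Corollary \ref{cor:denominator} computes the denominator as $\prod_{\alpha\in\Delta(\lieu,\liel)}(1-e^{-\alpha})$ on a compact Cartan of $L$, which is nonzero on regular elements. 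The left side is additive by the standard properties of $\Theta_G$. Hence both sides descend to $\ZZ$-linear functionals on $K_{\rm fl}(\lieg,K)$ with values in invariant eigendistributions on $L$.

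Second, I would use the Casselman--Osborne theorem (the strengthening recorded in the proof of Theorem \ref{thm:inheritance}(ii)) to match infinitesimal characters: if $X$ has infinitesimal character $\chi_\lambda$ for a $\theta$-stable Cartan $\lieh \subseteq \liel$, then every $Z(\liel)$-primary summand of $H^q(\lieu;X)$ has infinitesimal character of the form $\chi_{w(\lambda+\rho(\lieu))-\rho(\lieu)}$. After the $\rho(\lieu)$-shift this gives precisely the matching one expects between $\Theta_G(X)|_L$ and the Euler-characteristic on the right. Consequently both functionals respect the decomposition of $K_{\rm fl}$ along infinitesimal characters.

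Third, reduce the identity to a dense set of test modules via translation. Tensoring with a finite-dimensional $(\lieg,K)$-module $F$ and projecting to a $Z(\lieg)$-primary component defines Zuckerman's translation functor $\psi^\mu_\lambda$. The K\"unneth formula (Proposition \ref{prop:kuenneth}) applied along the diagonal embedding implements the corresponding operation on the $\lieu$-cohomology side, while multiplicativity of $\Theta_G$ and the Proposition \ref{prop:restriction} equality $c_\lieq(F)=[F|_{\liel,L\cap K}]$ does the same on the left. Together with the additivity established in step one, this reduces us to verifying the identity at a single regular integral infinitesimal character, where by Harish-Chandra the corresponding space of invariant eigendistributions on $L$ is spanned by characters of standard modules. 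I would then verify the identity explicitly on these generators: for cohomologically induced modules $\mathcal{R}^S_\lieq(V)$ arising from the $\theta$-stable parabolic $\lieq$ in the good range, the $\lieu$-cohomology collapses to a single degree with value essentially $V$ (twisted by $\bigwedge^{\dim\lieu}\lieu^*$), and Zuckerman's global character formula matches the ratio on the right by direct computation.

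The main obstacle will be the last step: making the explicit matching on standard modules rigorous requires the full machinery of cohomological induction together with the Zuckerman (hard) duality theorem to identify $H^q(\lieu;\mathcal{R}^S_\lieq(V))$ precisely, and a careful bookkeeping argument to see that the Weyl denominator $\sum_q(-1)^q\Theta_L(H^q(\lieu;{\bf 1}))$ coincides with the product over $\Delta(\lieu,\liel)$ that enters the global character formula for cohomologically induced modules. An alternative path, should cohomological induction prove too heavy, is to imitate Hecht--Schmid's reduction on a maximal compact Cartan of $L$ via Harish-Chandra's formula for characters of discrete series, combined with induction on $\dim\lieu\cap\liek$ using the Hochschild--Serre spectral sequence to peel off one $\theta$-stable step at a time.
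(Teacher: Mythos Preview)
The paper does not give its own proof of this proposition: it is stated as an imported result, attributed in the header to Vogan \cite[Theorem 8.1]{vogan1979ii}, and the paper immediately uses it (together with Proposition \ref{prop:osbornereal}) as a black box in the proof of Theorem \ref{thm:independence}. There is therefore nothing in the paper to compare your argument against.

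That said, your outline is broadly in the spirit of Vogan's original approach: coherent continuation (translation functors) to reduce to a single regular integral infinitesimal character, combined with an explicit verification on a generating set of standard modules. A few cautions if you pursue this. First, your third step implicitly assumes that the virtual characters of cohomologically induced standard modules $\mathcal R^S_\lieq(V)$ from the fixed $\theta$-stable $\lieq$ span $K_{\rm fl}(\lieg,K)$ at the chosen infinitesimal character; they do not in general---you need standard modules coming from \emph{all} $\theta$-stable data (equivalently, the Langlands classification or Vogan's classification), not just those induced from the particular $\lieq$ in the statement. Second, the assertion that $H^q(\lieu;\mathcal R^S_\lieq(V))$ ``collapses to a single degree'' is delicate outside the good range and requires the full strength of the duality and vanishing theorems; Vogan in fact works the other way around, computing $\lieu$-homology of standard modules via Frobenius reciprocity and the structure of principal series, rather than relying on cohomological induction. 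Your alternative path via Hochschild--Serre and induction on $\dim(\lieu\cap\liek)$ is closer to how one might actually make the computation tractable.
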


Now we can proof

\begin{theorem}[Linear independence]\label{thm:independence}
Assume that $\lieq_1,\dots,\lieq_r$ is a collection of constructible parabolic subalgebras
whose Levi factors $L_1,\dots,L_r$ cover a dense subset of $G$ up to conjugation. Then the map
$$
\prod_{i=1}^rc_{\lieq_i}:\;\;\;K_{\rm fl}(\lieg,K)\;\to\;\prod_{i=1}^r C_{\lieq_i}(\liel_i,L_i\cap K)
$$
is a monomorphism.
\end{theorem}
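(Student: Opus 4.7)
The plan is to factor the algebraic characters through Harish-Chandra's global distribution characters. By Harish-Chandra's linear independence of characters of irreducible finite length modules, the global character map
$$
\Theta_G:\;\;\;K_{\rm fl}(\lieg,K)\;\to\;\mathcal{D}'(G)^G
$$
is injective, so it suffices to prove that $c_{\lieq_i}(X)=0$ for every $i$ forces $\Theta_G(X)=0$.

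The technical heart is the following identity, which I would establish for every constructible parabolic $\lieq=\liel+\lieu$ and every finite length $(\lieg,K)$-module $X$: there is a dense open subset $L_0\subseteq L$ containing the regular elements on which
$$
\Theta_G(X)|_{L_0}\cdot\Theta_L(W_\lieq)|_{L_0}\;=\;\Theta_L(H_\lieq(X))|_{L_0}.\qquad(\ast)
$$
For real $\lieq$ this is Proposition \ref{prop:osbornereal}, for $\theta$-stable $\lieq$ Proposition \ref{prop:osbornethetastable}. For a general constructible $\lieq$ I proceed by induction on the length of a defining chain $\lieg=\lieq_0\supseteq\lieq_1\supseteq\cdots\supseteq\lieq_s=\lieq$. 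Given $(\ast)$ for $\lieq_j$ with Levi pair $(\liel_j,L_j\cap K)$, I apply the real or $\theta$-stable case, as appropriate, to the parabolic $\lieq_{j+1}\cap\liel_j\subseteq\liel_j$ and the finite length virtual module $H_{\lieq_j}(X)\in K_{\rm fl}(\liel_j,L_j\cap K)$; that the latter really lies in the finite length Grothendieck group is Corollary \ref{cor:constructiblefinitelength}. The two identities are then combined using the Hochschild-Serre transitivity $H_{\lieq_{j+1}}(X)=H_{\lieq_{j+1}\cap\liel_j}(H_{\lieq_j}(X))$ in $K_{\rm fl}(\liel_{j+1},L_{j+1}\cap K)$ together with the multiplicativity $W_{\lieq_{j+1}}=W_{\lieq_{j+1}\cap\liel_j}\cdot W_{\lieq_j}|_{(\liel_{j+1},L_{j+1}\cap K)}$, which follows from the vector space decomposition $\lieu_{j+1}=(\lieu_{j+1}\cap\liel_j)\oplus\lieu_j$ and hence $\bigwedge^\bullet\lieu_{j+1}^*\cong \bigwedge^\bullet(\lieu_{j+1}\cap\liel_j)^*\otimes_\CC \bigwedge^\bullet\lieu_j^*$.

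Given $(\ast)$, the theorem follows quickly. If $c_{\lieq_i}(X)=0$ for each $i$, then by Proposition \ref{prop:lockernel} there exist integers $k_i\geq 0$ with $W_{\lieq_i}^{k_i}\cdot H_{\lieq_i}(X)=0$ in $K_{\rm fl}(\liel_i,L_i\cap K)$. Applying $\Theta_{L_i}$, which is injective on the finite length Grothendieck group, yields the distributional identity $\Theta_{L_i}(W_{\lieq_i})^{k_i}\cdot\Theta_{L_i}(H_{\lieq_i}(X))=0$ on $L_i$. But the class function $\Theta_{L_i}(W_{\lieq_i})(l)=\det(1-\Ad(l)|_{\lieu_i^*})$ is real-analytic and non-vanishing on a dense open subset of $L_i$; shrinking $L_{i,0}$ from $(\ast)$ to the intersection with this subset, we conclude that $\Theta_G(X)$ vanishes on a dense open subset of every $L_i$. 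Since by hypothesis the conjugates of the $L_i$ cover a dense subset of $G$, and $\Theta_G(X)$ is a conjugation-invariant distribution represented by a locally integrable function on $G$ by Harish-Chandra's regularity theorem, $\Theta_G(X)$ vanishes on a dense conjugation-invariant subset of $G$. Hence $\Theta_G(X)=0$, and by injectivity $[X]=0$.

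The main obstacle is the induction establishing $(\ast)$ for constructible parabolics: real and $\theta$-stable parabolics behave geometrically very differently, and splicing them along a chain while reconciling the cohomological Euler characteristics with the multiplicative factorization of the Weyl denominators $W_{\lieq_j}$ requires careful bookkeeping. The crucial enabling fact is exactly Corollary \ref{cor:constructiblefinitelength}, which guarantees that each intermediate object $H_{\lieq_j}(X)$ is again of finite length, so that Propositions \ref{prop:osbornereal} and \ref{prop:osbornethetastable} can be reapplied at every inductive step.
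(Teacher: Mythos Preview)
Your proof is correct and follows essentially the same route as the paper: reduce to Harish-Chandra's linear independence of global characters by showing that each $c_{\lieq_i}(X)$ determines $\Theta_G(X)|_{L_i}$ via the Hecht--Schmid and Vogan character identities (Propositions~\ref{prop:osbornereal} and~\ref{prop:osbornethetastable}), extended to constructible parabolics by the Hochschild--Serre transitivity of Proposition~\ref{prop:transitivity}. Your treatment is in fact more explicit than the paper's on two points---the inductive splicing of real and $\theta$-stable steps along a constructible chain, and the handling of the localization kernel via Proposition~\ref{prop:lockernel} together with the generic nonvanishing of $\Theta_{L_i}(W_{\lieq_i})=\det(1-\Ad(\cdot)|_{\lieu_i^*})$---but these are elaborations of the same argument rather than a different approach.
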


\begin{proof}
By Proposition \ref{prop:parabolic}, the transitivity relation from Proposition \ref{prop:transitivity}, and the above Propositions \ref{prop:osbornereal} and \ref{prop:osbornethetastable} we know that each character
$$
c_{\lieq_i}(X)\;\in\;C_{\lieq_i}(\liel_i,L_i\cap K)
$$
determines the restriction of Harish-Chandra's global Character $\Theta_G(X)$ of $X$ to $L_i$ uniquely. Therefore by our assumption that the conjugates of $L_i$ cover a dense subset of $G$ we deduce the claim of the Theorem from Harish-Chandra's linear independence Theorem for his global characters \cite[Theorem 1]{harishchandra1954b}.
\end{proof}

\section{Localization}

In order to extend Theorem \ref{thm:independence} to more general than finite-length modules we need to study localization in more detail.

\subsection{An example}

Let us consider $G=\SL_2(\RR)$ and the corresponding reductive pair $(\liesl_2,\SO(2))$. We choose $\lieq=\liel+\lieu$ a minimal $\theta$-stable parabolic with a Levi decomposition as indicated.

Then $K_{\rm fl}(\liesl_2,\SO(2))$ is the free abelian group generated by the irreducible $(\liesl_2,\SO(2))$-modules, which are all known. For the Levi factor we have
$$
L\cap\SO(2)\;=\;L\;=\;\SO(2),
$$
and the corresponding Grothendieck group
$$
K_{\rm fl}(\liel,L)\;=\;K_{\rm fd}(\liel,L)
$$
is the free abelian group generated by the irreducible (hence finite-dimensional) representations of $\SO(2)$. In this group we have the Weyl denominator
$$
W_\lieq:=1-[2\alpha],
$$
where $\alpha$ is a generator of all the characters of $\SO(2)$, compatible with our choice of $\lieu$, i.e.\ the weight occuring in $\lieu$ is $-2\alpha$. As we know that $K_{\rm fd}(\liel,L)$ is a domain we know that localization at $W_\lieq$ is a faithful operation.

We may identify $K_{\rm fd}(\liel,L)$ as a ring with the ring
$\ZZ[X,X^{-1}]$, where $X$ per definition corresponds to $\alpha$, and we may think of
$$
C_{\lieq,\rm fd}(\liel,L)\;=\;K_{\rm fd}(\liel,L)[W_\lieq^{-1}]
$$
as the subring
$$
\ZZ[X,X^{-1},\frac{1}{1-X^2}]\;\subseteq\;\QQ(X).
$$
of the rational function field $\QQ(X)$. The character of a finite-dimensional $(\liesl_2,\SO(2))$-module $F_k$ of $\lieu$-highest weight $k\geq 0$ is, by Weyl's formula,
$$
c_\lieq(F_k)\;=\;\frac{X^k-X^{-(k+2)}}{1-X^2}.
$$
The formal identity
$$
\frac{X^k-X^{-(k+2)}}{1-X^2}\;=\;X^k+X^{k-2}+\cdots+X^{-(k-2)}+X^{-k}
$$
reflects the second identity of Theorem \ref{thm:fdmono}, i.e.\
\begin{equation}
c_\lieq(F_k)\;=\;[F_k|_{\liel,L}]\;=\;[k\alpha]+[(k-2)\alpha]+\cdots+[-(k-2)\alpha]+[-k\alpha].
\label{eq:finitelres}
\end{equation}
In particular we get the structure of $F_k$ as a $(\liel,L)$-module. A fundamental property of cohomological characters is that this remains true in general, not only for finite-dimensional modules, but whenever both sides of the equation happen to be well defined.

Now write $D_k$ for the (limits of) discrete series repesentation with lowest $\SO(2)$-type $k\cdot\alpha$, $k\geq 1$. Its character is
$$
c_\lieq(D_k)\;=\;\frac{X^k}{1-X^2}.
$$
Assuming $k\geq k'+3$, the multiplicativity of our characters tells us that
$$
c_{\lieq}(D_k\otimes_\CC F_{k'})\;=\;c_\lieq(D_k)\cdot c_\lieq(F_{k'})\;=\;
\frac{X^k}{1-X^2}\cdot\frac{X^{k'}-X^{-({k'}+2)}}{1-X^2}\;=\;
$$
$$
\frac{X^k\cdot(X^{k'}+X^{k'-2}+\cdots+X^{-(k'-2)}+X^{-k'})}{1-X^2}\;=\;
$$
$$
c_\lieq(D_{k+k'})\;+\;c_\lieq(D_{k+k'-2})\;+\;\cdots\;+\;c_\lieq(D_{k-k'-2}).
$$
Without appealing to Theorem \ref{thm:independence} we would like to argue that this formula implies the decomposition relation
\begin{equation}
[D_k\otimes_\CC F_{k'}]\;=\;[D_{k+k'}]\;+\;[D_{k+k'-2}]\;+\;\cdots\;+\;[D_{k-k'-2}]
\label{eq:tensordecomp}
\end{equation}
in $K_{\rm fl}(\liesl_2,\SO(2))$. To do so we first solve the Blattner problem for each $D_k$.

With the same notation as before, we have canonical isomorphisms
$$
K_{\rm df}(\liel,L)\;=\;K_{\rm a}(\liel,L)\;=\;\ZZ[[X,X^{-1}]]
$$
of abelian groups, where the right hand side denotes the {\em abelian group} of formal unbounded Laurent series in $X$. It is naturally a $\ZZ[X,X^{-1}]$-module, and this module structure is compatible with the canonical $K_{\rm fd}(\liel,L)$-module structure of the left hand side. The restriction of $D_k$ to $(\liel,L)$ will be an element in this Grothendieck group. However, due to the compatibility of characters with restrictions, we get in the localization the relation
$$
\frac{X^k}{1-X^2}\;=\;[D_k|_{\liel,L}]\;\in\;\ZZ[[X,X^{-1}]][\frac{1}{1-X^2}].
$$
Now we have
$$
\frac{X^k}{1-X^2}\;=\;\sum_{i=0}^\infty X^{k+2i},
$$
and we would like to conclude that
\begin{equation}
[D_k|_{\liel,L}]\;=\;[k\alpha]+[(k+2)\alpha]+[(k+4)\alpha]+\cdots
\label{eq:unlocalkl}
\end{equation}
is a valid identity in the unlocalized $K_{\rm df}(\liel,L)$. The problem is that this identity is valid a priori only up to the kernel of the localization map
\begin{equation}
\ZZ[[X,X^{-1}]]\;\to\;\ZZ[[X,X^{-1}]][\frac{1}{1-X^2}].
\label{eq:localizationx}
\end{equation}
It is an easy excercise to see that the kernel is, as a vector space, generated by the elements
\begin{equation}
y_\alpha^{(n)}\;:=\;\sum_{i=0}^\infty\binom{n-1+i}{n-1}(X^{n+2i}\;+\;(-1)^{n+1}X^{-(n+2i)})
\label{eq:yalphan}
\end{equation}
and
\begin{equation}
y_{\alpha,+}^{(n)}\;:=\;\sum_{i=0}^\infty\frac{n-1+2i}{n-1+i}\binom{n-1+i}{n-1}(X^{n-1+2i}\;+(-1)^{n+1}\;X^{-(n-1+2i)})
\label{eq:yalphanplus}
\end{equation}
for $n\geq 1$. We remark that the coefficients occuring eventually are {\em integers}. Note that
$$
(X+X^{-1})\cdot y_\alpha^{(n)}\;=\;y_{\alpha,+}^{(n)},
$$
and
$$
(X-X^{-1})\cdot y_\alpha^{(n)}\;=\;\;y_\alpha^{(n-1)}.
$$
Therefore, by induction, the kernel of $(X-X^{-1})^n$ as an endomorphism of $\ZZ[[X,X^{-1}]]$ is generated by $y_\alpha^{(n)}$ as a $\ZZ[X,X^{-1}]$-module. Using the above relations it is not hard to see, and appealing to Proposition \ref{prop:lockernel}, that the collection of the elements \eqref{eq:yalphan} and \eqref{eq:yalphanplus} for $n\geq 1$ are indeed a $\CC$-basis of kernel of the map \eqref{eq:localizationx}.

In order to proof \eqref{eq:unlocalkl}, Harish-Chandra tells us that the multiplicity of the $\SO(2)$-module $m\alpha$ in $D_k$ is bounded by a constant independently of $m$. However the coefficients in \eqref{eq:yalphan} and \eqref{eq:yalphanplus} grow polynomially of order $n-1$, and this remains true for linear combinations of those terms. Therefore the only possible contribution from the kernel of the localization map to the identity \eqref{eq:unlocalkl} is a $\CC$-linear combination of
$$
y_\alpha^{(1)}\;=\;\sum_{i=-\infty}^\infty X^{1+2i}
$$
and
$$
y_{\alpha,+}^{(1)}\;=\;\sum_{i=-\infty}^\infty X^{2i}.
$$
However the minimal $\SO(2)$-type of $D_k$ is $k\cdot\alpha$ and is uniquely determined. As to $k\geq 1$, this implies that there cannot be any contribution from those two kernel elements to the identity \eqref{eq:unlocalkl}, i.e.\ the latter formula must be true, completing its proof.

With \eqref{eq:unlocalkl} at hand, we may solve the identity \eqref{eq:tensordecomp} explicitly in the unlocalized Grothendieck group of admissible $(\liel,L)$-modules:
$$
[D_k|_{\liel,L}\otimes_\CC F_{k'}|_{\liel,L}]\;=\;\sum_{i=0}^\infty X^{k+2i}\cdot(X^{k'}+X^{k'-2}+\cdots+X^{-k'})\;=\;
$$
$$
[D_{k+k'}|_{\liel,L}]\;+\;[D_{k+k'-2}|_{\liel,L}]\;+\;\cdots\;+\;[D_{k-k'-2}|_{\liel,L}].
$$
by the decomposition relation \eqref{eq:finitelres} for $F_{k'}$. We see that over $L$, there is no contribution from the kernel of the localization to the identity \eqref{eq:tensordecomp}. A fortiori this is true over $(\liesl_2,\SO(2))$, hence \eqref{eq:tensordecomp} follows.

Along the same lines we may deduce from
$$
c_\lieq(D_k\otimes_\CC D_{k'})\;=\;c_\lieq(D_k)\cdot c_\lieq(D_{k'})
$$
the decomposition of the tensor product of two discrete series modules $D_k$ and $D_{k'}$. It decomposes again into a sum of discrete series modules.

So what happens if we consider the opposite discrete series $D_{-k}$, i.e.\ the unique irreducible $(\liesl_2,SO(2))$-module with (unique) minimal $\SO(2)$-type $-k\alpha$? Here the tensor product of $D_k$ and $D_{-k'}$ gives along the same lines
$$
c_\lieq(D_k\otimes_\CC D_{-k'})\;=\;c_\lieq(D_k)\cdot c_\lieq(D_{k'})\;=\;
\frac{X^k}{1-X^2}\cdot\frac{X^{-k-2}}{1-X^2}.
$$
This is meaningful in the localization, but cannot capture everything here. The reason being that the tensor product of $D_k$ and $D_{-k'}$ as modules over $\SO(2)$, lies no more in $\mathcal C_{\rm df}(\liel,L)$, because the multiplicity of either $0\cdot\alpha$ or $1\cdot\alpha$, depending on the parity of $k+k'$, is no more finite. This tells us that there must be a (large) contribution of the continuous spectrum to $D_k\otimes_\CC D_{-k'}$.

Another interesting relation is
$$
[D_1]-[D_{-1}]\;=\;y_{\alpha}^{(1)},
$$
which shows that this element lies in the kernel of the localization map \eqref{eq:localizationx}. It is annihilated by $W_\lieq=1-X^2$. However there is no such relation for finite linear combinations of elements $[D_k]$ and $[D_{-k'}]$ if we insist on $k,k'\geq 2$. Because we know that asymptotically the multiplicity of the $\SO(2)$-types in such a virtual finite length representation are bounded by a constant. Therefore the only contribution of the kernel of the localization map comes from the kernel of $W_\lieq=1-X^2$ itself, which is generated by $y_\alpha^{(1)}$ and $y_{\alpha,+}^{(1)}$ as a $\CC$-vector space. This shows that any such element from the kernel must contain at least one of the two $\SO(2)$-modules $0\cdot\alpha$ or $1\cdot\alpha$. In particular this abtract argument shows that the subgroup
$$
K_{\rm fl}^{\geq 2}(\liesl_2,\SO(2))\;\subseteq\;
K_{\rm fl}(\liesl_2,\SO(2))
$$
generated by $D_k$ and $D_{-k'}$ for $k,k'\geq 2$, has trivial intersection with the kernel \eqref{eq:localizationx}, a fortiori its intersection with the kernel of $c_\lieq$ is trivial.

An analogous statement remains true if we consider the subgroup
$$
K_{\rm df}^{\geq d+2,d}(\liesl_2,\SO(2))\;\subseteq\;
K_{\rm df}(\liesl_2,\SO(2))
$$
of discretely decomposables with finite multiplicity, which is generated by those with the property that the multiplicity of $D_k$ resp.\ $D_{-k'}$ is bounded by a polynomial of degree $d$ in $k$ resp.\ $k'$, and which do {\em not} contain $D_k$ resp.\ $D_{-k'}$ for $k,k'<d+2$. This again is a consequence of the explicit form of the elements \eqref{eq:yalphan} and \eqref{eq:yalphanplus}.

\subsection{The general case}

As we already saw in the examples in the previous section, we need to understand the kernel of the localization map in order to lay hands on larger-than-finite-length modules. This situation already arises when we approach Blattner formulae, i.e.\ restrictions of admissible modules from $(\lieg,K)$ to $K$. Generally the restriction will no more be of finite length, as over $K$ finite length is equivalent to finite dimension.

We will give complete results here, but won't go into detailed proofs, as this is quite technical. For details the reader can consult Sections 4 to 6 in \cite{januszewskipreprint}.

In principle one may consider the localization problem for general germane parabolic subalgebras, i.e.\ for Levi factors whose Lie algebras are not necessarily abelian. However in this case the structure of the Grothendieck group in question, $K_{\rm df}(\liel,L\cap K)$ say, as a $K_{\rm fd}(\liel,L\cap K)$-module is not clear in general.

In any case the module structure of $K_{\rm df}(\liel,L\cap K)$ may be deduced formally from the $K_{\rm fd}(\liel,L\cap K)$-module structure of $K_{\rm fl}(\liel,L\cap K)$.

Classically characters are studied via their restrictions to Levi factors corresponding to Cartan algebras $\liel$. This is also the case where we can give complete answers as then the multiplicative structure alluded to above is clear. Another appeal of this case is that the $\lieu$-cohomology of any finite length $(\lieg,K)$-module is a {\em finite-dimensional} $(\liel,L\cap K)$-module.

Let us introduce the basic setup. We assume given a commutative square
$$
\begin{CD}
(\lieg',K')@>i>>(\lieg,K)\\
@AAA @AAA\\
(\lieq',L'\cap K') @>j>> (\lieq,L\cap K)
\end{CD}
$$
of pairs, where $i$ is an inclusion of reductive pairs, and the vertical arrows are inclusions of constructible parabolic pairs, and $j$ is induced by $i$. Let $\lieq=\liel+\lieu$ and $\lieq'=\liel'+\lieu'$ be the Levi decompositions and assume that $\liel'\subseteq\liel$ is abelian, i.e.\ a Cartan subalgebra in $\lieg'$.

Then $\Delta(\lieu+\lieu',\liel')$ generates a lattice
$$
\Lambda_0\;\subseteq\;\liel'^*.
$$
The group algebra $\CC[\Lambda_0]$ acts naturally on the space $\CC[[\Lambda_0]]$ of formal unbounded Laurent series in $\Lambda_0$ via multiplication. This turns $\CC[[\Lambda_0]]$ into a $\CC[\Lambda_0]$-module which reflects the $K_{\rm fd}(\liel',L'\cap K')$-module structure of $K_{\rm df}(\liel',L'\cap K')$ with the restriction that $\Lambda_0$ only captures $\lieu+\lieu'$-integral weights. In our study of localization this is enough, as the Weyl denominator stemming from $\lieu$ and $\lieu'$ naturally lives inside $\CC[\Lambda_0]$, and its kernel in $\CC[[\Lambda_0]]$ is sufficient to describe the kernel in the larger $K_{\rm df}(\liel',L'\cap K')$.

However for reasons of symmetry we introduce a slightly larger lattice $\Lambda$. Say the rank of $\Lambda_0$ is $d$, then $\Lambda$ is uniquely characterized by the short exact sequence
$$
0\to\Lambda_0\to\Lambda\to(\ZZ/2\ZZ)^d\to 0.
$$
We think of $\Lambda$ as a {\em multiplicative} abelian group. Then any $\alpha\in\Delta(\lieu+\lieu',\liel')$ has a {\em square root} $\alpha^{\frac{1}{2}}$ in $\Lambda$, i.e.\ and element whose square is $\alpha$. Further to any such $\alpha$ we have a generalized {\em Weyl reflection} $w_\alpha$ sending $\alpha$ to $\alpha^{-1}$ and $\alpha^{\frac{1}{2}}$ to $\alpha^{-\frac{1}{2}}$.

To define $w_\alpha$ we choose a Cartan subalgebra $\lieh\subseteq\liel$ containing $\liel'$ complementary to $\lieu+\lieu'$. Then $\alpha$ has a preimage $\alpha'\in\Delta(\lieu+\lieu',\lieh)$ and the associated Weyl reflection $w_{\alpha'}$ projects down to $w_\alpha$ in $\Lambda$. This projected image is independent of the choice of $\alpha'$. The collection of $w_\alpha$ generate a generalized Weyl group $W_\Lambda$ which contains the Weyl group $W(\lieg',\liel')$. Both groups act on $\CC[\Lambda]$ and $\CC[[\Lambda]]$. We fix a $W_\Lambda$-invariant bilinear scalar product $\langle\cdot,\cdot\rangle$ on $\Lambda\otimes_\ZZ\RR$.

In order to descend from $\Lambda$ to $\Lambda_0$ we introduce the Galois group $G_{\Lambda/\Lambda_0}$ of the field extension $\CC(\Lambda)/\CC(\Lambda_0)$, the latter fields being the quotient fields of the domains $\CC[\Lambda]$ resp.\ $\CC[\Lambda_0]$. By construction $G_{\Lambda/\Lambda_0}$ is isomorphic to $\{\pm 1\}^d$, each sign corresponding to a choice of square root of an element of a fixed basis of $\Lambda_0$ in $\CC[\Lambda]$. Then $G_{\Lambda/\Lambda_0}$ acts on $\CC[\Lambda_0]$ and $\CC[\Lambda]$ and this action commutes with the action of $W_\Lambda$. Then we may use standard Galois theory to descend from $\Lambda$ to $\Lambda_0$. We leave the details to the reader.

In the higher rank case the kernel of the localization map is completely described by the following generalization of the elements \eqref{eq:yalphan} and \eqref{eq:yalphanplus}. For any $\alpha\in\Delta(\lieu+\lieu',\liel')$ with square root $\alpha^{\frac{1}{2}}\in\Lambda$ we have the elements
$$
d_{\alpha,\pm}\;\;:=\;\;\alpha^{-\frac{1}{2}}\pm\alpha^{\frac{1}{2}}
\;\;\in\;\;\CC[\Lambda],
$$
$$
s_{\alpha}\;\;:=\;\;
\alpha^{\frac{1}{2}}
\sum_{k=0}^\infty 
\alpha^{k}
\;\;\in\;\;\CC[[\Lambda]],
$$
and for $n\geq 0$ we set
$$
y_{\alpha}^{(n)}\;\;:=\;\;s_{\alpha}^n+(-1)^{n+1} w_\alpha s_{\alpha}^n.
$$
This is the one-dimensional generalization of the element \eqref{eq:yalphan}. This works out as
$$
d_{\alpha,-}\cdot s_{\alpha}\;=\;1,
$$
$$
d_{\alpha,-}\cdot w_\alpha s_{\alpha}\;=\; -1,
$$
gives
$$
d_{\alpha,-}\cdot y_{\alpha}^{(n)}\;=\;y_{\alpha}^{(n-1)},
$$
and therefore
$$
d_{\alpha,-}^n\cdot y_{\alpha}^{(n)}\;=\;y_{\alpha}^{(0)}\;=\;0.
$$
The second element \eqref{eq:yalphanplus} corresponds to the product of $y_{\alpha}^{(n)}$ and $d_{\alpha,+}$.

Say we are given elements $\beta_1,\dots,\beta_r\in\Lambda$. Then we define the module of {\em $(\beta_1,\dots,\beta_r)$-finite} Laurent series $\CC[[\Lambda]]_{(\beta_1,\dots,\beta_r)}$ as the space of series
$$
f\;=\;\sum_{\mu\in\Lambda}f_\mu\cdot\mu\;\;\in\;\;\CC[[\Lambda]],
$$
where $f_\mu\in\CC$, satisfying the finiteness following condition: For any $\lambda\in\Lambda$ the set
$$
\{(k_1,\dots,k_r)\in\ZZ^r\mid f_{\lambda\mu_1^{k_1}\cdots\beta_r^{k_r}}\neq 0\}
$$
is {\em finite}. Then $\CC[[\Lambda]]_{(\beta_1,\dots,\beta_r)}$ is a $\CC[\Lambda]$-submodule of $\CC[[\Lambda]]$ and it turns out that the kernel of multiplication with $d_{\alpha,-}^n$ in $\CC[[\Lambda]]$ is given by
$$
\CC[[\Lambda]]_{(\alpha_0)}\cdot y_{\alpha}^{(n)}.
$$
The above notion of finiteness is necessary for the products occuring in this expression to be well defined. This representation is still redundant, as the case $n=1$ already shows. Pick a system of representatives $\CC[[\Lambda]]_{\alpha^{\frac{1}{2}}=1}\subseteq\CC[[\Lambda]]_{(\alpha)}$ for the factor module
$$
\CC[[\Lambda]]_{(\alpha)}/\left(\CC[[\Lambda]]_{(\alpha)}\cdot (\alpha^{\frac{1}{2}}-1)\right).
$$
Then the above kernel equals
$$
\sum_{k=1}^n
\CC[[\Lambda]]_{\alpha^{\frac{1}{2}}=1}\cdot y_{\alpha}^{(k)}\;+\;\CC[[\Lambda]]_{\alpha^{\frac{1}{2}}=1}\cdot d_{\alpha,+}\cdot y_{\alpha}^{(k)},
$$
and this representation is unique.

The kernel of mixed terms
$$
d_{\underline{\alpha}}\;:=\;\prod_{i=1}^r d_{\alpha_i,-}^{n_i}
$$
for a collection
$$
\underline{\alpha}\;=(\alpha_1,\dots,\alpha_r)\;\in\;\Delta(\lieu,\liet)
$$
of pairwise distinct elements and a tuple of positive integer exponents
$$
\underline{n}=(n_1,\dots,n_r)
$$
is generated by the element
\begin{equation}
y_{\underline{\alpha}}^{\underline{n}}\;:=\;
s_{\alpha_1}^{n_1}\cdot s_{\alpha_2}^{n_2}\cdots s_{\alpha_r}^{n_r}
+
(-1)^{1+n_1+\cdots+n_r}
(w_{\alpha_1}s_{\alpha_1})^{n_1}\cdot (w_{\alpha_2}s_{\alpha_2})^{n_2}\cdots (w_{\alpha_r}s_{\alpha_r})^{n_r},
\label{eq:generaly}
\end{equation}
and its lower degree analogues. For a subset $I\subseteq\{1,\dots,r\}$ we denote by $\underline{\alpha}^I$ the tuple deduced from $\underline{\alpha}$ by deletion of the components with indices in $I$. Then again we have the explicit description of the kernel as
$$
\sum_{I\subsetneq\{1,\dots,r\}}
\CC[[\Lambda]]_{\underline{\alpha}^I}\cdot
y_{\underline{\alpha}^I}^{\underline{n}^I}
$$
and one may again find an explicit description via a suitable set of representatives for the relations
$$
\forall i\not\in I:\;\;\;\alpha_i^{\frac{1}{2}}\;=\;1.
$$
However the resulting expression is technical due to its recursive structure resulting from lower degree terms.

Along these lines one may proof
\begin{theorem}[Januszewski, {\cite{januszewskipreprint}}]\label{thm:kernel}
Let
$$
z=\sum_{\lambda}z_\lambda\cdot\lambda
\;\in\;
\kernel d_{\underline{\alpha}}^{\underline{n}}
$$
with the property that there exists a $\lambda_0\in\Lambda$ such that for any $1\leq i\leq r$ and any $\lambda\in\Lambda$ with
\begin{equation}
\left|
\langle \lambda-\lambda_0,\alpha_i\rangle
\right|
<
\frac{n_i+1}{2}\cdot\langle \alpha_i,\alpha_i\rangle
+
\sum_{j\neq i}\frac{n_j}{2}\cdot\langle \alpha_i,\alpha_j\rangle,
\label{regularity}
\end{equation}
we have
$$
z_\lambda = 0
$$
then
$$
z=0.
$$
\end{theorem}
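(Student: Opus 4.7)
The plan is to treat the equation $d_{\underline{\alpha}}^{\underline{n}}\, z = 0$ as a finitely supported linear recurrence on the coefficients $z_\lambda$ and propagate the vanishing of $z$ from the regularity region outward to all of $\Lambda$. Expanding $D := d_{\underline{\alpha}}^{\underline{n}} = \prod_i (\alpha_i^{-1/2} - \alpha_i^{1/2})^{n_i}$ multiplicatively, the support of $D$ in $\Lambda$ is the rectangular stencil
$$R_D = \Big\{\prod_i \alpha_i^{m_i/2} : m_i \in \{-n_i,\, -n_i+2,\, \ldots,\, n_i\}\Big\},$$
of extent $n_i + 1$ in the $\alpha_i$-direction, with coefficient $\pm 1$ at each of the $2^r$ extremal corners. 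Consequently $(Dz)_\mu = 0$ can be solved, at any corner of the translated stencil $\mu \cdot R_D$, for the $z$-value at that corner as a $\CC$-linear combination of $z$-values at the other stencil points.

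Denote by $U := \bigcup_i \{\lambda \in \Lambda : |\langle \lambda - \lambda_0, \alpha_i\rangle| < R_i\}$ the given vanishing region, where $R_i := \frac{n_i+1}{2}\langle\alpha_i,\alpha_i\rangle + \sum_{j \neq i}\frac{n_j}{2}\langle\alpha_i,\alpha_j\rangle$. Fix any $\lambda^\ast \in \Lambda \setminus U$. Then $|\langle \lambda^\ast - \lambda_0, \alpha_i\rangle| \geq R_i$ for every $i$; pick some $i$ and a sign $\epsilon \in \{\pm 1\}$ such that $\epsilon\langle \lambda^\ast - \lambda_0, \alpha_i\rangle \geq R_i$, and apply the recurrence at $\mu = \lambda^\ast \alpha_i^{-\epsilon n_i/2}$. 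This expresses $z_{\lambda^\ast}$ as a linear combination of $z_{\lambda'}$ for $\lambda' = \lambda^\ast \alpha_i^{\epsilon(m_i - n_i)/2}\prod_{j \neq i}\alpha_j^{m_j/2}$ ranging over the stencil with $(\underline{m}) \neq (n_i, 0, \ldots, 0)$. The calibration of $R_i$ is designed so that $\epsilon\langle \lambda' - \lambda_0, \alpha_i\rangle = \epsilon\langle \lambda^\ast - \lambda_0, \alpha_i\rangle + \frac{m_i - n_i}{2}\langle\alpha_i,\alpha_i\rangle + \sum_{j \neq i}\frac{m_j}{2}\langle\alpha_i,\alpha_j\rangle$, and the maximum of the shift over $(\underline{m}) \neq (n_i, 0, \ldots, 0)$ is controlled precisely by $-\langle\alpha_i,\alpha_i\rangle + \sum_{j \neq i}\frac{n_j}{2}|\langle\alpha_i,\alpha_j\rangle|$: hence each such $\lambda'$ either already lies in $U$ (so $z_{\lambda'} = 0$), or is strictly closer to $\lambda_0$ in the $\alpha_i$-direction than $\lambda^\ast$.

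The main obstacle is to convert this local one-step improvement into a globally terminating induction, since the recurrence simultaneously shifts in every $\alpha_j$-direction and may increase $|\langle \lambda' - \lambda_0, \alpha_j\rangle|$ for $j \neq i$. I would introduce a Lyapunov function $\ell : \Lambda \to \ZZ_{\geq 0}$ with $\ell^{-1}(0) = U$ that strictly decreases under a suitable choice of $(i, \epsilon)$ at each step; a natural candidate is the maximum over $i$ of the number of lattice translates by $\alpha_i$ needed to move $\lambda$ into the $i$-th slab, combined with a lexicographic tiebreak. The cross-correction $\sum_{j \neq i}\frac{n_j}{2}\langle\alpha_i,\alpha_j\rangle$ in the regularity condition is precisely what absorbs the worst-case $\alpha_i$-distortion induced by simultaneous variation in the remaining $\alpha_j$-directions across the stencil, so that $\ell$ indeed drops under each application of the recurrence. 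Once this is established, a finite descent reduces $z_{\lambda^\ast}$ to a linear combination of $z$-values on $U$, which vanish by hypothesis, completing the proof.
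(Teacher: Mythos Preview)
The paper does not actually prove this theorem; it cites \cite{januszewskipreprint} and says only that the proof goes ``along these lines,'' meaning via the explicit description of $\ker d_{\underline{\alpha}}^{\underline{n}}$ as $\sum_{I}\CC[[\Lambda]]_{\underline{\alpha}^I}\cdot y_{\underline{\alpha}^I}^{\underline{n}^I}$ worked out just before the statement. Your direct recurrence-propagation strategy is therefore a genuinely different route: it is more elementary in that it avoids classifying the kernel, while the paper's intended argument reads off the support and the polynomial coefficient growth of each generator $y_{\underline{\alpha}^I}^{\underline{n}^I}$ directly and matches them against the regularity window.

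That said, your proposal has a real gap beyond what you flag. First a small point: the choice $\mu=\lambda^\ast\alpha_i^{-\epsilon n_i/2}$ places $\lambda^\ast$ at stencil coordinate $(\epsilon n_i,0,\ldots,0)$, which is only a stencil point when every $n_j$ with $j\neq i$ is even; otherwise $(Dz)_\mu=0$ does not involve $z_{\lambda^\ast}$ at all. This is fixable by choosing all signs $\epsilon_j$ at once and putting $\lambda^\ast$ at a genuine corner. The substantive issue is the descent. Even at a corner, a single recurrence step produces $\lambda'=\lambda^\ast\prod_j\alpha_j^{-\epsilon_j k_j}$ with $0\le k_j\le n_j$, and since the $\alpha_j$ are neither orthogonal nor necessarily linearly independent, $\langle\lambda'-\lambda_0,\alpha_i\rangle$ can \emph{increase}: take $k_i=0$ and $k_j$ with $-\epsilon_j k_j\langle\alpha_j,\alpha_i\rangle>0$. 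Your shift bound uses $\sum_{j\neq i}\tfrac{n_j}{2}|\langle\alpha_i,\alpha_j\rangle|$, but the hypothesis \eqref{regularity} only grants the slab of half-width $R_i$ with the \emph{signed} $\langle\alpha_i,\alpha_j\rangle$; when some of these are negative the slab is strictly thinner than your argument requires, and the dichotomy ``$\lambda'\in U$ or strictly closer in the $\alpha_i$-direction'' fails outright. Your proposed Lyapunov (maximum slab-distance with lexicographic tiebreak) is not shown to be monotone under these moves, and I do not see that it is. Until a descent compatible with the signed bound in \eqref{regularity} is actually exhibited, the argument is incomplete; the paper's route through the explicit $y$-generators is what makes that signed bound the correct one.
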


\subsection{Applications to Blattner formulae}

Suppose we are in the situation where $(\lieg',K')=(\liek,K)$, and assume for simplicity that $K$ is connected. The main idea to calculate the kernel of the localization map in this case is that analogously to the explicit expressions \eqref{eq:yalphan} and \eqref{eq:yalphanplus} we get explicit expressions for the term \eqref{eq:generaly}, which then shows that the coefficients of the monomials grow polynomially with degree depending on $\underline{n}$.

At the same time we know that by Harish-Chandra's bound for the multiplicities of $K$-types in irreducibles, that $K$-types are bounded by their dimensions.

Therefore we have for any finite length $(\lieg,K)$-module $X$ that the multiplicity $m_\lambda$ of the $K$-type in $X$ with highest weight $\lambda$ is bounded by
\begin{equation}
m_\lambda\;\;\leq\;\;
C\cdot\!\!\!
\prod_{\beta\in\Delta(\lien,\liet)}\!\!
\langle\lambda+\rho(\lien),\beta\rangle,
\label{eq:weylbound}
\end{equation}
for a constant $C$ depending on $X$ but not on $\lambda$, which is a consequence of Weyl's dimension formula.

Then this bound yields a constraint on the exponents $\underline{n}$ which ultimately results in the following condition:
\begin{itemize}
\item[(S)] 
For any highest weight $\lambda$ of a $K$-type occuring in $X$ we have for any $w\in W(K,L')=W(\liek,\liel')$, and any numbering $\beta_1,\dots,\beta_{r}$ of the pairwise distinct elements of $\Delta(\lieu+\lieu';,\liel')$ and any non-negative integers $n_1,\dots,n_r$ with the property that for any $S\subseteq\{1,\dots,r\}$
\begin{equation}
\sum_{s\in S}n_s\;\leq\;\#\{\beta\in\Delta(\lieu',\liel')\mid\exists i\in S:\langle\beta_i,\beta\rangle\neq 0\},
\label{eq:nsum}
\end{equation}
the condition
\begin{equation}
|\langle w(\lambda+\rho(\lieu'))-\lambda_0,\beta_1\rangle|
\;\geq\;
\frac{1}{2}
\langle \beta_1,\beta_1\rangle
\;+\,
\sum_{i=1}^{r}
\frac{n_{i}+1}{2}
\langle \beta_1,\beta_i\rangle.
\label{eq:lambdaregularity}
\end{equation}
\end{itemize}
Where we fixed $\lambda_0$. A natural choice would be $\lambda_0=\rho(\lieu')$ for example.

Then if we consider the subgroup $K_{\lambda_0}$ of $K_{\rm fl}(\lieg,K)$ generated by the modules $X$ satisfying condition (S) we know by Theorem \ref{thm:kernel} and our multiplicity bound that in the commutative diagram
$$
\begin{CD}
K_{\lambda_0}@>\iota>> K_{\rm df}(\liek,K)\\
@Vc_\lieq VV@Vc_{\lieq'} VV\\
C_{\lieq,\rm fl}(\liel,L\cap K)@>\iota>> C_{\lieq',\rm df}(\liel',L')[W_{\lieq/\lieq'}^{-1}]
\end{CD}
$$
the map $c_{\lieq'}$ is injective on the image of $\iota$, the latter denoting the forgetful map along $i$. In particular the restriction of the algebraic character $c_\lieq(X)$ to the maximal torus $L'\subseteq T$ uniquely determines the Blattner formula for every $X$ satisfying condition (S), cf.\ \cite{januszewskipreprint}.

By contraposition we obtain that any $X$ in the kernel of the localization map must contain a $K$-type violating the regularity condition \eqref{eq:lambdaregularity}.

In summary we reduced Blattner formulae to the following assertions:
\begin{itemize}
\item[(B)] {\em Boundedness:} Multiplicities of the $K$-types occuring in $X$ are linearly bounded by their dimension (Harish-Chandra).
\item[(S)] {\em Sample:} Knowledge of the multiplicities for the set of $K$-types violating the regularity condition \eqref{eq:lambdaregularity}.
\end{itemize}

We will see in the next section that the same statement remains true for not necessarily compact $(\lieg',K')$.

We emphasize that (S) needs to be known {\em a priori}. An instance where this may be verified without using any character theory is given by Schmid's upper bound for the $K$-types for the discrete series \cite[Theorem 1.3]{schmid1975}. His result may be used to deduce the Blattner conjecture in many cases, and we conjecture that our regularity condition for $\lambda_0=\rho(\lieu')$ eventually covers all of the discrete series. A complete proof of the Blattner formula using classical character theory was given by Hecht and Schmid in \cite{hechtschmid1975}.

\subsection{Applications to discretely decomposable branching problems}

Let us return to the case of a general incluson $i:(\lieg',K')\to (\lieg,K)$ of reductive pairs, and corresponding germane parabolic subalgebras $\lieq'$ and $\lieq$ as before.

In general the restriction of a finite length $(\lieg,K)$-module $X$ will not be discretely decomposable as a $(\lieg',K')$-module. Furthermore even if $X$ is discretely decomposable, we do not have a universal bound for the multiplicities of composition factors at hand, contrary to the case of $\lieg'=\liek$ treated in the previous section.

Therefore we need to impose these conditions a priori. Let us consider in $K_{\rm fl}(\lieg,K)$ the subgroup $K'$ generated by the modules $X$ with the property that the restriction to $(\lieg',K')$ is discretely decomposable with finite multiplicities. Then pullback along $i$ provides us with a natural group homomorphism
$$
\iota:\;\;\;K'\to K_{\rm df}(\lieg',K').
$$
We would like to have a similar map on the Levi factors of our parabolic subalgebras. But in order to guarantee its existence and compatibility with $\iota$, we assume that we are trating {\em Borel} subalgebras, i.e.\ we assume we are given a collection $\lieq_1,\dots,\lieq_s\subseteq\lieg$ of constructible Borel subalgebras, whose Levi factors $L_i$ cover a dense subset of $G'\subseteq G$ up to conjugation, where $G'$ is the reductive subgroup of $G$ corresponding to the reductive subpair $(\lieg',K')$ of $(\lieg,K)$. Such a collection always exists by Corollary \ref{cor:constructible}. We set
$$
\lieq_i'\;:=\;\lieq_i\cap\lieg',
$$
and fix the compatible Levi decompositions
$$
\lieq_i\;:=\;\liel_i+\lieu_i,
$$
and
$$
\lieq_i'\;:=\;\liel_i'+\lieu_i',
$$
respectively. Then all the Lie algebras $\liel_i$ are abelian and therefore restriction along the inclusion
$$
i:\;\;\;(\liel_i,L_i\cap K)\to(\liel_i',L_i'\cap K')
$$
sends finite length modules to finite length modules and discretely decomposables to discretely decomposables.

Now we fix an exponent $b\geq 0$ and consider for any $(\lieg,K)$-module $X$ whose restriction along $i$ is discretely decomposable the following boundedness condition:
We have for any $1\leq i\leq s$ and any degree $q$ that the multiplicity of any character $\lambda\in{\liel_i'}^*$ in the $q$-th $\lieu_i'$-cohomology of $X$ satisfies
\begin{equation}
m_\lambda(H^q(\lieu_i';X))\;\leq\; c_X\cdot\!\!\!\!\prod_{\beta\in\Delta(\lieu_i',\liel')}\!\!\!\langle\lambda+\rho(\lieu_i'),\beta\rangle^b + d_X.
\label{eq:multiplicitybound}
\end{equation}
for some  constants $c_X, d_X\geq 0$ independent of $X$ and $\lambda$ and $i$.

Then the modules $X$ satisfying the bound \eqref{eq:multiplicitybound} generate a subgroup $K_b'$ of $K'$ and we are interested in the commutative diagrams
$$
\begin{CD}
K_b'@>\iota>> K_{\rm df}(\lieg',K')\\
@Vc_{\lieq_i} VV@Vc_{\lieq_i'} VV\\
C_{\lieq_i,\rm fl}(\liel_i,L_i\cap K)@>\iota>> C_{\lieq_i',\rm df}(\liel_i',L')[W_{\lieq_i/\lieq_i'}^{-1}]
\end{CD}
$$
By the structure of the elements generating the kernel of the localization we know that the bound \eqref{eq:multiplicitybound} limits the degree of the annihilating Weyl denominator contributing to the kernel of the localization map in the image $\iota(K_b')$. The knowledge of the bound for the degree together with Theorem \ref{thm:kernel} yields the condition
\begin{itemize}
\item[(S')] 
For irreducible $Z$ with infinitesimal character $\lambda$ occuring in the restriction of $X$ to $(\lieg',K')$, for any $1\leq i\leq s$ and for any $w\in W(\lieg',\liel_i')$, and any numbering $\beta_1,\dots,\beta_r$ of the elements set $\Delta(\lieu_i,\liel_i')$ and any non-negative integers $n_1,\dots,n_r$ satisfying
\begin{equation}
n_1+\cdots+n_r\;=\;b\cdot\#\{\beta\in\Delta(\lieu_i',\liel_i')\mid\exists j:\langle\beta_j,\beta\rangle\neq 0\},
\label{eq:nsumb}
\end{equation}
the condition
\begin{equation}
|\langle w(\lambda)-\lambda_0,\beta_1\rangle|
\;\geq\;
\frac{1}{2}
\langle \beta_1,\beta_1\rangle
+
\sum_{l=1}^r
\frac{n_l+1}{2}
\langle \beta_1,\beta_l\rangle
\label{eq:lambdaregularityfd}
\end{equation}
holds.
\end{itemize}
Then the subgroup $K_{b,S'}'\subseteq K'$ generated by those $X$ satisfying condition (S') has trivial intersection with the kernel of the localization map corresponding to the pair $(\lieq_i, \lieq_i')$, and we obtain
\begin{theorem}[Januszewski, {\cite{januszewskipreprint}}]
For any $X\in K_{b,S'}'$ the multiplicities of all composition factors of $\iota(X)$ are uniquely determined by the images $\iota(c_{\lieq_i}(X))$ of the characters in $C_{\lieq_i',\rm df}(\liel_i',L_i')[W_{\lieq_i/\lieq_i'}^{-1}]$ via its simultaneous preimage of the maps $c_{\lieq_i'}$.
\end{theorem}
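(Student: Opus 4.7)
The plan is to show that the collection of maps $c_{\lieq_i'}$ is injective on the subgroup $\iota(K_{b,S'}')\subseteq K_{\rm df}(\lieg',K')$, and then interpret the theorem via the commutative diagram preceding its statement. Applying Proposition \ref{prop:transitivity} to each inclusion $(\lieq_i',L_i'\cap K')\hookrightarrow(\lieq_i,L_i\cap K)$ above $i\colon(\lieg',K')\hookrightarrow(\lieg,K)$, we obtain $c_{\lieq_i'}\circ\iota=\iota\circ c_{\lieq_i}$ in the localization $C_{\lieq_i',\rm df}(\liel_i',L_i')[W_{\lieq_i/\lieq_i'}^{-1}]$ for every $i$. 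Hence it suffices to prove that any $Y\in\iota(K_{b,S'}')$ with $c_{\lieq_i'}(Y)=0$ in every localization has all composition factor multiplicities equal to zero; the theorem then follows by applying this to $Y=\iota(X_1-X_2)$ whenever $\iota(c_{\lieq_i}(X_1))=\iota(c_{\lieq_i}(X_2))$ for every $i$.

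So suppose $c_{\lieq_i'}(Y)=0$ for all $i$. Unwinding the definitions, $H_{\lieq_i'}(Y)\in K_{\rm df}(\liel_i',L_i')$ is killed by a power of the relevant Weyl denominator, i.e.\ lies in the kernel of the localization map. The bound \eqref{eq:multiplicitybound}, inherited by $Y$ through $\iota$ from the defining property of $K_b'$, forces the weight multiplicities of each $H^q(\lieu_i';Y)$ to grow polynomially of degree at most $b$ in every direction. In conjunction with the explicit description of the localization kernel in terms of the elements $y_{\underline\alpha}^{\underline n}$ of \eqref{eq:generaly}, this restricts any representation of $H_{\lieq_i'}(Y)$ as a kernel element to tuples $\underline n$ constrained precisely as in \eqref{eq:nsumb}.

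The heart of the argument is the application of Theorem \ref{thm:kernel}. For any irreducible composition factor $Z$ of $Y$ with infinitesimal character $\lambda$, the sharpened Casselman--Osborne theorem recalled in the proof of Theorem \ref{thm:inheritance}(ii) guarantees that a weight of the form $w(\lambda)-\rho(\lieu_i')$, for some $w\in W(\lieg',\liel_i')$, contributes non-trivially to $H_{\lieq_i'}(Y)$ whenever $Z$ does. Condition (S'), applied with the fixed reference weight $\lambda_0$, is precisely the assertion that every such weight lies outside the sub-regular region \eqref{regularity} of Theorem \ref{thm:kernel} for every admissible $\underline n$. The theorem therefore forces the corresponding coefficient in $H_{\lieq_i'}(Y)$ to vanish, so $Z$ cannot in fact contribute to $H_{\lieq_i'}(Y)$.

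Running this argument over all $i$ and using that the Levi subgroups $L_i'$ cover a dense subset of $G'$ up to conjugation, Hecht--Schmid's and Vogan's identities (Propositions \ref{prop:osbornereal} and \ref{prop:osbornethetastable}) together with Harish-Chandra's linear-independence theorem, as in the proof of Theorem \ref{thm:independence} applied to any finite-length truncation of $Y$, imply that the multiplicity of $Z$ in $Y$ vanishes. The main obstacle I anticipate is the careful bookkeeping that converts the polynomial bound on weight multiplicities into the constraint \eqref{eq:nsumb} on admissible $\underline n$, and the verification that (S') is tuned precisely to negate \eqref{regularity} over that entire range; once this is in place the proof is a fusion of the kernel analysis underlying Theorem \ref{thm:kernel} with the linear-independence argument of Theorem \ref{thm:independence}.
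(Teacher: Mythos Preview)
Your overall strategy matches the paper's sketch: use the commutative square, show that the multiplicity bound \eqref{eq:multiplicitybound} confines the possible kernel contributions to those governed by exponents $\underline n$ satisfying \eqref{eq:nsumb}, and then invoke Theorem~\ref{thm:kernel} under condition (S'). Two points need repair, however.

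First, your application of Theorem~\ref{thm:kernel} is inverted. The theorem says that a kernel element vanishing on the \emph{inside} of the strip \eqref{regularity} must vanish identically; it says nothing about coefficients \emph{outside}. What (S') actually guarantees, via Casselman--Osborne, is that every weight occurring in any $H^q(\lieu_i';Z)$ for a composition factor $Z$ lies outside the strip around $\lambda_0$. Hence $H_{\lieq_i'}(Y)$ has all its coefficients \emph{inside} the strip equal to zero, and Theorem~\ref{thm:kernel} then forces $H_{\lieq_i'}(Y)=0$ as an element of $K_{\rm df}(\liel_i',L_i')$. Your sentence ``the theorem therefore forces the corresponding coefficient in $H_{\lieq_i'}(Y)$ to vanish, so $Z$ cannot in fact contribute'' has the implication running the wrong way and should be replaced by this argument.

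Second, once you have $H_{\lieq_i'}(Y)=0$ for all $i$, the passage to $Y=0$ via ``finite-length truncation'' does not work: truncating an infinite formal sum in $K_{\rm df}(\lieg',K')$ to a finite partial sum changes the image under $H_{\lieq_i'}$, so you cannot feed the truncation into Theorem~\ref{thm:independence} directly. The correct reduction is to decompose $Y$ by infinitesimal character. By Casselman--Osborne the weights contributed to $H_{\lieq_i'}(\,\cdot\,)$ by irreducibles with distinct infinitesimal characters are disjoint, and by Theorem~\ref{thm:harishchandrafinite} each infinitesimal-character component $Y_\chi$ is a \emph{finite} $\ZZ$-linear combination of irreducibles. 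Thus $H_{\lieq_i'}(Y)=0$ forces $H_{\lieq_i'}(Y_\chi)=0$ for every $\chi$ and every $i$, and now Theorem~\ref{thm:independence} applies to each $Y_\chi\in K_{\rm fl}(\lieg',K')$ to give $Y_\chi=0$, whence $Y=0$. With these two fixes your argument goes through and coincides with the approach the paper outlines.
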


Here again we may generalize our result to modules possibly violating condition (S') as long as we know the contribution of the composition factors violating (S') for the various parabolics $\lieq_i$. We remark however that even in the compact case the analogous condition (S) may involve infinitely many irreducibles. This situation already arises if the root system of $\liek$ contains orthogonal roots.

So far we ignored the action of the Weyl group, which in certain cases eventually may allow us to strengthen condition (S) resp.\ (S').

Another remark is that the condition of {\em integrality} on the multiplicities, i.e.\ the fact that all $m_Z(X)$ are {\em integers} may be exploited in certain cases as well, especially in conjunction with multiplicity-one statements.

In general so far not much is known about multiplicities in discretely decomposable restrictions beyond Harish-Chandra's bound for the multiplicities of $K$-types in finite length representations.

We have the following
\begin{conjecture}[Kobayashi, {\cite[Conjecture C]{kobayashi2000}}]\label{conj:kobayashi}
Let $(G,G')$ be a semisimple symmetric pair, and $\pi\in\hat{G}$ an irreducible unitary representation of $G$. Assume that the restriction of $\pi$ to $G'$ is infinitesimally discretely decomposable, then the dimension
$$
\dim\Hom_{G'}(\tau,\pi|_{G'}),\;\;\;\tau\in\hat{G}'
$$
is finite.
\end{conjecture}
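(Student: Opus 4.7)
The plan is to deduce finiteness of $\dim \Hom_{G'}(\tau, \pi|_{G'})$ by establishing finiteness of the composition factor multiplicity $m_\tau(\pi|_{(\lieg',K')})$, and to control the latter via the algebraic character machinery developed above. Under infinitesimal discrete decomposability, the two quantities are proportional (via the unitary structure of $\pi$), so a bound on the second suffices.

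First, I would exploit the symmetric pair structure to choose good data. Pick a Cartan involution $\theta$ of $G$ commuting with the involution defining $G'$, a $\theta$-stable maximal torus $T' \subseteq G'$, and extend it to a $\theta$-stable maximal torus $T \subseteq G$. By Corollary \ref{cor:constructible} we obtain a family of constructible Borel subalgebras $\lieq_1,\dots,\lieq_s \subseteq \lieg$ whose Levi factors $L_i$ cover a dense subset of $G'$ up to conjugation, arranged so that $\lieq_i' := \lieq_i \cap \lieg'$ is a constructible Borel of $\lieg'$ with compatible Levi decomposition $\lieq_i' = \liel_i' + \lieu_i'$. Since $\pi$ is of finite length as a $(\lieg,K)$-module, Theorem \ref{thm:inheritance} tells us that $H^q(\lieu_i;\pi)$ is a finite length $(\liel_i,L_i\cap K)$-module, hence finite-dimensional as $\liel_i$ is abelian. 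This rigidity on the $\lieg$-side is the only input one has and must be transferred to the $\lieg'$-side.

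Second, writing $\pi|_{(\lieg',K')} = \bigoplus_\tau m_\tau \tau$ (in the sense of Section 2.7), I would apply the Hochschild-Serre spectral sequence for $\lieg' \subseteq \lieg$ with parabolics $\lieq_i' \subseteq \lieq_i$:
$$
E_1^{p,q} \;=\; \bigwedge^{p}(\lieu_i/\lieu_i')^{*} \otimes_{\CC} H^{q}(\lieu_i';\pi|_{(\lieg',K')}) \;\Longrightarrow\; H^{p+q}(\lieu_i;\pi).
$$
Since cohomology commutes with the decomposition of $\pi|_{(\lieg',K')}$, each isotypic piece $m_\tau \cdot H^{*}(\lieu_i';\tau)$ injects into the $E_1$-term. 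The aim is to combine finite-dimensionality of the abutment with non-vanishing of $H^{*}(\lieu_i';\tau)$ in at least one degree (using, say, the non-triviality of the infinitesimal character of $\tau$ on $\liel_i'$ and Casselman-Osborne) to conclude that the polynomial multiplicity bound \eqref{eq:multiplicitybound} holds uniformly in $i$. Once this is in hand, the algebraic characters $c_{\lieq_i}(\pi) \in C_{\lieq_i,\mathrm{fl}}(\liel_i,L_i\cap K)$ are \emph{finite} expressions (because $\pi$ is of finite length on the $\lieg$-side), and Theorem \ref{thm:kernel} together with the discretely decomposable version of Theorem \ref{thm:independence} (as developed in Section 6.4) pins down the $m_\tau$ as integers satisfying a finite linear system. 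In particular $m_\tau<\infty$ for all $\tau$, proving the conjecture.

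\textbf{Main obstacle.} The hardest step is precisely the promotion of finite-dimensionality of $H^{q}(\lieu_i;\pi)$ into the uniform polynomial bound \eqref{eq:multiplicitybound} on $H^{q}(\lieu_i';\pi|_{(\lieg',K')})$. The spectral sequence bounds the abutment and produces a finite-dimensional subquotient of $E_\infty$, but the passage through the filtration allows \emph{a priori} wild behaviour along the fibres of $\lieu_i \twoheadrightarrow \lieu_i/\lieu_i'$; establishing a Harish-Chandra-type polynomial bound for the restriction is, in a precise sense, the content of the conjecture itself, so the approach has a circular flavour that must be broken by an additional input specific to symmetric pairs. The most plausible such input is Kobayashi's rank/associated-variety condition, or an analogue of Schmid's upper bound (cited after (B), (S) in Section 6.3) adapted to the pair $(G,G')$. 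Once such an a priori bound is secured, the algebraic character machinery of this paper translates it into the desired finiteness of multiplicities.
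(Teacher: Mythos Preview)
The statement you are trying to prove is labeled \emph{Conjecture} in the paper, not Theorem; the paper does not supply a proof. Kobayashi's Conjecture~C is quoted as an open problem, and immediately afterwards the author formulates his own Conjecture~\ref{conj:polybound} as a strengthening motivated by the character machinery. So there is no ``paper's own proof'' to compare against.

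As for your attempt: you have correctly identified the obstruction yourself. The Hochschild--Serre spectral sequence for $\lieq_i'\subseteq\lieq_i$ controls the \emph{Euler characteristic} of $H^{\bullet}(\lieu_i';\pi|_{(\lieg',K')})$ (after multiplication by the wedge factor), but not the individual cohomology groups, and cancellation across degrees means finite-dimensionality of the abutment $H^{p+q}(\lieu_i;\pi)$ does not force finiteness of the $E_1$-terms. To extract $m_\tau<\infty$ from the character identity you need precisely the a~priori bound \eqref{eq:multiplicitybound}, and that bound is essentially the content of Conjecture~\ref{conj:polybound}, which is strictly stronger than the conjecture you are trying to prove. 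Invoking ``Kobayashi's rank/associated-variety condition'' or ``an analogue of Schmid's upper bound'' is not a proof step but a wish; the paper's point in Section~6.4 is exactly that the algebraic character theory \emph{reduces} the branching problem to establishing such a bound, not that it supplies one. Your proposal is therefore a correct outline of why the conjecture is natural from the paper's viewpoint, but it is not a proof.
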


Motivated by our results presented in this section the author formulated
\begin{conjecture}[Januszewski, {\cite{januszewskipreprint}}]\label{conj:polybound}
In the setting of Conjecture \ref{conj:kobayashi} the dimension
$$
\dim\Hom_{G'}(\tau,\pi|_{G'}),\;\;\;\tau\in\hat{G}'
$$
grows at most polynomially in the norm of the infinitesimal character of $\tau$ (in the sense of \eqref{eq:multiplicitybound}).
\end{conjecture}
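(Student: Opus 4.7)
The plan is to combine the algebraic character calculus of Section 5 with Theorem \ref{thm:kernel} to extract a polynomial bound on $m_\tau := \dim \Hom_{G'}(\tau, \pi|_{G'})$ from the \emph{finite} character $c_{\lieq_i}(\pi)$ of the ambient representation. First, by Corollary \ref{cor:constructible} I choose a finite collection of constructible Cartan subpairs $(\liel_i', L_i' \cap K')$ of $(\lieg', K')$ whose conjugates cover a dense subset of $G'$, extend each $\liel_i'$ to a $\theta$-stable Cartan $\liel_i$ of $\lieg$, and fix compatible constructible Borel subpairs $\lieq_i' = \liel_i' + \lieu_i' \subseteq \lieq_i = \liel_i + \lieu_i$ with $\lieq_i \cap \lieg' = \lieq_i'$.

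Since $\pi$ is admissible of finite length, Corollary \ref{cor:constructiblefinitelength} together with the abelianness of $\liel_i$ gives that each $H^q(\lieu_i; \pi)$ is finite-dimensional, so $c_{\lieq_i}(\pi) = N_i(\pi)/W_{\lieq_i}$ with $N_i(\pi) := H_{\lieq_i}(\pi)$ a \emph{finite} integral combination of one-dimensional $(\liel_i, L_i \cap K)$-characters. Proposition \ref{prop:transitivity} then yields $c_{\lieq_i'}(\pi|_{G'}) \equiv c_{\lieq_i}(\pi)|_{\liel_i'}$ in the appropriate localization. Upon expanding $W_{\lieq_i}^{-1}|_{\liel_i'}$ as a formal geometric series in $\CC[[\Lambda_0]]$, its weight multiplicities grow polynomially of degree at most $\#\Delta(\lieu_i, \liel_i')$ (a Kostant partition function estimate), and multiplication by the finite $N_i(\pi)|_{\liel_i'}$ preserves this polynomial growth. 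Hence $c_{\lieq_i'}(\pi|_{G'})$ admits a formal representative in $\CC[[\Lambda_0]]$ with polynomially bounded weight multiplicities.

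By discrete decomposability, $\pi|_{G'}$ is the union of its finite-length $(\lieg', K')$-submodules, and the formal decomposition $\pi|_{G'} = \bigoplus_\tau m_\tau \tau$ makes sense with $m_\tau \in \NN \cup \{\infty\}$. Multiplying the identity $c_{\lieq_i'}(\pi|_{G'}) \equiv c_{\lieq_i}(\pi)|_{\liel_i'}$ by $W_{\lieq_i'}$ and applying the Weyl character formula to each $\tau$ gives
$$
\sum_\tau m_\tau \sum_{w \in W(\lieg', \liel_i')} (-1)^{\ell(w)} [w(\lambda_\tau + \rho(\lieu_i')) - \rho(\lieu_i')] \;\equiv\; W_{\lieq_i'} \cdot c_{\lieq_i}(\pi)|_{\liel_i'}
$$
modulo the kernel of the localization map. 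For $\lambda_\tau$ satisfying the regularity condition \eqref{regularity} relative to a fixed $\lambda_0$, Theorem \ref{thm:kernel} guarantees that the coefficient of $[\lambda_\tau]$ on the left-hand side equals $m_\tau$ (no cancellation from Weyl translates of other $\tau'$ or from kernel contributions), so both finiteness and the polynomial bound on $m_\tau$ follow from the polynomial growth established in the previous paragraph. For the finitely many non-regular $\lambda_\tau$ per infinitesimal character, Theorem \ref{thm:harishchandrafinite} and Weyl-group symmetry absorb the exceptional contributions into the constant.

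\textbf{Main obstacle.} The genuine difficulty is controlling the kernel of the localization map globally. Theorem \ref{thm:kernel} bounds kernel elements with annihilator $d_{\underline\alpha}^{\underline n}$ only once $\underline n$ is a priori bounded, so what is really needed is a \emph{uniform} a priori upper bound on the degree of the Weyl-denominator polynomial annihilating the hypothetical kernel contribution of $\pi|_{G'}$ --- equivalently, a uniform exponent $b$ in condition (S') of Section 6.4 applicable to the full restriction of $\pi$. Establishing this uniform $b$ is the substantive content of the conjecture and almost certainly requires independent structural input, such as a polynomial bound on $\dim \mu'(\tau)$ in $\|\lambda_\tau\|$ via Vogan's minimal $K'$-type theory combined with Harish-Chandra's polynomial multiplicity bound for $\pi|_K$ and quantitative compact branching from $K$ to $K'$.
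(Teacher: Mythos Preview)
The statement you are attempting to prove is a \emph{conjecture} in the paper, not a theorem: the paper offers no proof, only the motivation leading up to it. So there is no ``paper's own proof'' to compare against.

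Your proposal is not a proof either, and you essentially acknowledge this in your final paragraph. The argument you outline is precisely the heuristic that motivates the conjecture in Section~6.4: one would like to run the localization machinery of Theorem~\ref{thm:kernel} on $\pi|_{G'}$, but that theorem only controls kernel elements annihilated by $d_{\underline{\alpha}}^{\underline{n}}$ for \emph{bounded} $\underline{n}$. To apply it you need an a~priori polynomial bound on the multiplicities $m_\lambda(H^q(\lieu_i';\pi))$ of the form \eqref{eq:multiplicitybound}---which is exactly the statement of the conjecture. Your ``proof'' thus reduces the conjecture to itself: the uniform exponent $b$ you say you need in the last paragraph is not a technical loose end but the entire substantive content of the assertion.

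There is also a secondary issue in the middle of your argument. You claim that for $\lambda_\tau$ regular in the sense of \eqref{regularity}, Theorem~\ref{thm:kernel} guarantees that the coefficient of $[\lambda_\tau]$ on the left-hand side equals $m_\tau$ with ``no cancellation from Weyl translates of other $\tau'$.'' But Theorem~\ref{thm:kernel} is a vanishing criterion for elements already known to lie in a fixed kernel $\ker d_{\underline{\alpha}}^{\underline{n}}$; it does not by itself separate the contributions of different $\tau'$ in an infinite sum, nor does it establish that the formal sum $\sum_\tau m_\tau \cdot c_{\lieq_i'}(\tau)$ even defines an element of $K_{\rm df}(\liel_i',L_i')$ before the finiteness of $m_\tau$ is known. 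Both the finiteness (Conjecture~\ref{conj:kobayashi}) and the polynomial growth (Conjecture~\ref{conj:polybound}) remain open.
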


\section{Galois-equivariant characters}

In this section we sketch the first steps towards a Galois-equivariant algebraic character theory. The correct formal setup for this situation would be schemes representing representations, i.e.\ schemes parametrizing representations over the various extensions (including algebras, not only fields, and to allow for non-affine bases as well). However in order to keep the exposition as simple and self-contained as possible, we use an ad hoc approach in the classical language of linear algebraic groups and rational representations over fields of characteristic $0$. This fits well into Michael Harris' framework \cite{harris2012}.

Instead of relying on Beilinson-Bernstein-localization we introduce a rational version of Zuckerman's cohomological induction. This allows for a purely rational theory over any field $k$ of characteristic $0$, without the need to fall back to a universal domain like $\CC$, although we use $\CC$ in our exposition for convenience.

We define modules for pairs $(\liea_k,B_{k'})$ where $\liea_k$ is a Lie algebra defined over a field $k$ and $B_{k'}$ a reductive linear algebraic group defined over an extension $k'/k$. Such a module can, but need not have $k$-rational points. This makes the situation interesting, and opens many questions that we unfortunately cannot deal with here.

\subsection{Rationality of Harish-Chandra modules and Shimura varieties}

In the context of Shimura varieties considered in \cite{harris2012} the point of departure is a reductive algebraic group $G$ defined over $\QQ$, together with a Shimura datum $(G,X)$. The latter canonically defines a family of complex algebraic varieties $S_U(G,X)$ where $U$ runs through compact open subgroups of $G(\Adeles^{(\infty)})$ of the finite adelization of $G$. Following Shimura and Deligne the varieties $S_U(G,X)$ have canonical models over a number field $E(G,X)$. This rational structure comes from a set of distinguished points in $S_U(G,X)$, which stem from abelian varieties with complex multiplication. The rational structure on the latter has a description via certain reciprocity laws.

Now to any special point $x\in X$ corresponds a stabilizer $K_x\subseteq G(\RR)$, which is a compact subgroup of maximal dimension (but not necessarily maximal). However as the point $x$ varies under the action of an $\alpha\in\Aut(\CC)$, the group $K_x$ varies as well, and in order to control the rational structure one needs to pass from $G$ to an inner form ${}^{\alpha,x}G$ of $G$, as $K_x$ is not necessarily defined over $\QQ$, i.e. compactness of the real-valued points is not necessarily preserved. This complicates the study of rationality of Harish-Chandra modules, as it forces the study of a collections of pairs in order to lay hands on the Galois action in general. In \cite{harris2012} the fundamentals of such a theory is developped.

We follow a purely representation-theoretic approach here. For an introduction to the theory of linear algebraic groups, we refer to Mahir's lecture, and also to Borel's book \cite{book_borel1991}, \cite{book_chevalley2005}. The articles \cite{springer1979} and \cite{murnaghan2003} contain short introductions to the relative theory.

\subsection{Rational pairs and rational modules}

Let $k\subseteq\CC$ be a subfield. A {\em pair over $k$} is a pair $(\liea_k,B_{k'})$ consisting of a Lie algebra $\liea_k$ over $k$ and a reductive linear algebraic group $B_{k'}$ over an extension $k'/k$, a $k'$-rational inclusion
$$
i:\Lie(B_{k'})\to\liea_{k'}
$$
of the algebraic Lie algebra of $B_{k'}$ into
$$
\liea_{k'}\;:=\;\liea_k\otimes k',
$$
together with an extension of the adjoint action of $B_{k'}$ on $\Lie(B_{k'})$ to $\liea_{k'}$ whose differential is the adjoint action of $\Lie(B_{k'})$ as subalgebra on $\liea_{k'}$.

We write $\mathcal C_{\rm fd}(B_{k'})$ for the category of $k'$-rational finite-dimensional representations of $B_{k'}$. I.e.\ finite-dimensional $k'$-vectorspaces with a rational action of $B_{k'}(k')$. Then $\liea_{k'}$ is natually an object inside $\mathcal C_{\rm fd}(B_{k'})$ and the above conditions on the pair $(\liea_k,B_{k'})$  guarantee that $\liea_{k'}$ is a {\em Lie algebra object} inside this category. In other words, it comes with a $B_{k'}$-linear morphism
$$
[\cdot,\cdot]:\;\;\;\liea_{k'}\otimes_{k'}\liea_{k'}\;\to\;\liea_{k'}
$$
satisfying
$$
[\cdot,\cdot]\circ\Delta\;=\;0,
$$
where
$$
\Delta:\;\;\;\liea_{k'}\;\to\;\liea_{k'}\otimes_{k'}\liea_{k'}
$$
is the diagonal, and furthermore $[\cdot,\cdot]$ satisfies the Jacobi identity, which may be equally expressed diagrammatically inside of $\mathcal C_{\rm fd}(B_{k'})$.

Then a {\em finite-dimensional ($k'$-rational) $(\liea_{k'},B_{k'})$-module} is an $\liea_{k'}$-module object $M$ in $\mathcal C_{\rm fd}(B_{k'})$. In other words we have a map
$$
\liea_{k'}\otimes_{k'} M\;\to\;M
$$
in $\mathcal C_{\rm fd}(B_{k'})$ satisfying the usual axioms of a module over $\liea_{k'}$, which may be expressed diagrammatically as well. This gives us the category $\mathcal C_{\rm fd}(\liea_{k'},B_{k'})$ of finite-dimensional $(\liea_{k'},B_{k'})$-modules.

We introduce the category $\mathcal C(B_{k'})$ of ${\rm ind}$-objects in $\mathcal C_{\rm fd}(B_{k'})$. Every object in this category may be thought of as a (filtered) inductive limit of finite-dimensional representations of $B_{k'}$, or more explicitly as an increasing union of the latter.

Then $\liea_{k'}$ is again a Lie algebra object in $\mathcal C(B_{k'})$ and we may analogously to $\mathcal C_{\rm fd}(\liea_{k'},B_{k'})$ define the category $\mathcal C(\liea_{k'},B_{k'})$ of {\em $(\liea_{k'},B_{k'})$-modules rational over ${k'}$} as the category of $\liea_{k'}$-objects in $\mathcal C(B_{k'})$.

To a rational pair every inclusion $\sigma:{k'}\to\CC$ yields a complex Lie algebra
$$
\liea^\sigma\;:=\;\liea_{k'}\otimes_{{k'},\sigma}\CC
$$
and a complex Lie group
$$
B^\sigma(\CC)\;:=\;(B_{k'}\otimes_{{k'},\sigma}\CC)(\CC)
$$
Then inside this group we find a compact subgroup $B^\sigma$ of maximal dimension, unique up to conjugation by an element of $B^\sigma(\CC)$, with the property that the inclusion
$$
B^\sigma\;\to\;B^\sigma(\CC)
$$
induces an equivalence
$$
\mathcal C_{\rm fd}(B\otimes_{{k'},\sigma}\CC)\;\to\;
\mathcal C_{\rm fd}(B^\sigma)
$$
on the spaces of finite-dimensional representations. We call the pair (in the sense of section 2) $(\liea^\sigma,B^\sigma)$ {\em associated} to $(\liea_{k'},B_{k'})$.

So far we only defined $k'$-rational modules. We postpone the general definition it relies on the notion of base change.

\subsection{Rational models of reductive pairs}

Let $k\subseteq\CC$ be a subfield. A {\em reductive pair} over $k$ is a triple $(\lieg_k,K,G_k)$ consisting of a reductive linear algebraic group $G_k$ defined over $k$, its Lie algebra $\lieg_k$, and a reductive subgroup $K\subseteq G_k\otimes_k\CC$ with the property that there is an involution $\theta\in\Aut(G_k\otimes_k\CC)$ such that $K$ is the kernel of $\theta$.

We do not assume $K$ (or $\theta$) to be defined over $k$. As a subgroup of $G:=G_k\otimes_k\CC$ it acquires a natural rational structure and there is a unique field of definition $k_K/k$ inside $\CC$ and a unique reductive $k_K$-subgroup $K_{k_K}\subseteq G_k\otimes_k k_K$ whose base change to $\CC$ gives $K$.

We assume $\lieg_k\otimes_k\CC$ furthermore to be multiplicity-free as a $K$-module. This implies that $\liek_{k_K}$ has a unique $K_{k_K}$-invariant complement $\liep_{k_K}$ inside $\lieg_{k_K}$ and thus we have a canonical {\em Cartan decomposition}
$$
\lieg_{k_K}\;=\;\liep_{k_K}\;+\;\liek_{k_K},
$$
and in particular the involution $\theta$ is defined over $k_K$ and unique. To emphasize our idea of a {\em pair}, we write $(\lieg_k,K)^G$ for the triple, and call it {\em $K$-split} if $k_K=k$ and {\em non-$K$-split} otherwise.

To such a datum, split or non-split, every inclusion $\sigma:k_K\to\CC$ provides us, as above, with a complex reductive Lie algebra
$$
\lieg^\sigma\;:=\;\lieg_k\otimes_{k,\sigma}\CC
$$
and a complex Lie group
$$
K^\sigma(\CC)\;:=\;(K_{k_K}\otimes_{k_K,\sigma}\CC)(\CC).
$$
Again we find a compact subgroup $K^\sigma$ inside the latter which gives rise to equivalence between finite-dimsional rational representations of $(K_{k_K}\otimes_{k,\sigma}\CC$ and $K^\sigma$, and we call the pair $(\lieg^\sigma,K^\sigma)$ {\em associated} to $(\lieg_k,K)^G$.

If $(\lieg^\sigma,K^\sigma)$ is a reductive pair, then we say that $(\lieg_k\otimes_{k,\sigma}\sigma(k),K_k\otimes_k\sigma(k))$ resp.\ $(\lieg_k,\theta)^G$ is a {\em $\sigma(k)$-rational model}.

As in the previous section we may consider the category $\mathcal C_{{\rm fd}}(\lieg_{k_K},K_{k_K})$ of finite-dimensional ${k_K}$-rational $(\lieg_{k_K},K_{k_K})$-modules. It comes with a distinguished subcategory $\mathcal C_{{\rm fd}}(\lieg_{k_K},K_{k_K})^G$ which consists of the finite-dimensional ${k_K}$-rational rational representations of $G_{k_K}$.

This category is of particular importance to us, as it is eventually defined over $k$: the category $\mathcal C_{{\rm fd}}(\lieg_{k},K_{k})^G$ of $k$-rational rational (finite-dimensional) $G_k$-representations is a neutralized Tannakian category, i.e.\ it is a $k$-linear tensor category with a canonical fibre functor $\mathscr F$ into the category of finite-dimensional $k$-vector spaces. The linear algebraic group $G_k$ over $k$ as the Tannaka dual of $\mathcal C_{{\rm fd}}(\lieg_k,K_k)^G$. Then the forgetful functor
$$
\mathcal C_{{\rm fd}}(\lieg_{k_K},K_{k_K})^G\;\to\;
\mathcal C_{{\rm fd}}(\liek_{k_K},K_{k_K})^K
$$
corresponds to the inclusion $K_{K_k}\to G_{k_K}:=G_k\otimes_k k_K$ and thus to the induced inclusion of pairs.


\subsection{Base change and restriction of scalars}

For every map $\tau:k\to l$ of fields inside $\CC$ we get for every pair $(\liea_k,B_{k'})$ an $l$-rational pair $(\liea_l^\tau,B_{l'}^\tau)$ given by
$$
\liea_l^\tau\;:=\;\liea_k\otimes_{k,\tau}l,
$$
and
$$
B_{l'}^\tau\;:=\;B_{k'}\otimes_{k,\tau}l',
$$
where $l':=l\cdot\tau(l)$ denotes the compositum. Along the same lines every $(\liea_{k'},B_{k'})$-module $M_{k'}$ gives rise to an $(\liea_{l'}^\tau,B_{l'}^\tau)$-module
$$
M_{l'}^\tau\;:=\;M_{k'}\otimes_{k',\tau}l'.
$$
In the case where $\tau$ is a set-theoretic inclusion we drop it from the notation.

If $l$ is finite over $\tau(k)$ and if the degree of the extension $l/k$ is the same as the degree $l'/k'$, then this operation is left adjoint to {\em restriction of scalars} (in the sense of Weil \cite{book_weil1961}), which is defined as follows. Departing from an $l$-rational pair $(\liea_l,B_{l'})$ we obtain a $k$-rational pair
$$
(\res_\tau\liea_l,\res_\tau B_{l'})
$$
where $\res_\tau\liea_l$ denotes the pullback of the Lie algebra $\liea_l$ along $\tau:k\to l$, and $\res_\tau B_{l'}$ is the restriction of scalars of $B_{l'}$ along $\tau:k'\to l'$ in the sense of Weil. Then for any $(\liea_{l'},B_{l'})$-module $M_{l'}$ we get an $(\res_\tau\liea_{l'},\res_\tau B_{l'})$-module
$$
\res_\tau M_{l'}\;:=\;\tau^*(M_{l'}),
$$
again as the pullback along $\tau$. If $(\liea_{l'},B_{l'})=(\lieg_{l'},K)^G$ is a split reductive pair, then the restriction of scalars of $G_{l'}$ gives the reductive group associated to the restricted pair.

\subsection{Rational Harish-Chandra modules for non-$K$-split pairs}

We already introduced the category of $(\lieg_k,K)^G$-modules for $K$-split reductive pairs. Now in order to define a category of Harish-Chandra modules for non-$K$-split pairs we restrict ourselves to the connected case, i.e.\ we assume that $(\lieg_k,K)^G$ is a reductive pair with the property that $K$ is (geometrically) connected.

For any field extension $l/k$ any $l$-rational $\lieg_l$-module $M_l$ becomes a $\liek_{l'}$-module $M_{l'}$ after base change to $l':=l\cdot k_K$. We say that $M_l$ is a {\em $(\lieg_l,K)^G$-module} if it has the following properties:
\begin{itemize}
\item[($H_2^l$)] $M_{l'}$ is locally $\liek_{l'}$-finite.
\item[($H_3^l$)] the action of $\liek_{l'}$ on $M_{l'}$ lifts to $K_{l'}$.
\item[($H_1^l$)] the lifted action of $K_{l'}$ on $M_{l'}$ is compatible with the action of $\lieg_{l'}$.
\end{itemize}
We remark that the action in $(H_3^l)$ is unique if it exists as $K$ is connected and we are eventually only lifting finite-dimensional representations.

Mutatis mutandis we define $(\liea_l,B_{l'})$-modules whenever $B_{l'}$ is geometrically connected.

We call a map $f:M_l\to N_l$ between $(\lieg_l,K)^G$-modules is a {\em morphism} if it is $\lieg_l$-linear. This is equivalent to the base change $f:M_{l'}\to N_{l'}$ to be a morphism of $(\lieg_{l'},K_{l'})$-modules. This provides us with the category ${\mathcal C}(\lieg_l,K)$ of $(\lieg_l,K)^G$-modules. It contains the category $\mathcal C_{{\rm fd}}(\lieg_l,K)^G$ as a full abelian subcategory, and if $(\lieg_l,K)^G$ is $K$-split, it agrees with the previously defined category of $(\lieg_l,K)^G$-modules as $l=l'$ in this case.

We remark that as $K$ is connected, Schur's Lemma holds for simple modules in $\mathcal C(\lieg_l,K)$ by \cite{quillen1969}, and even in $\mathcal C(\liea_k,B_{k'})$ whenever $B_{k'}$ is geometrically connected.

We refer to \cite{lepowsky1976} for fundamental theorems on linear properties of rational reductive pairs.

\subsection{Equivariant cohomology}

Let $(\liea_k,B_k)$ be any $k$-rational pair with $B_k$ defined over $k$ as well. As $B_k$ is reductive, the categories of finite-dimensional representations of $B_k$ is semisimple, hence all objects in $\mathcal C_{\rm fd}(B_k)$ are injective and projective, and the same remains true in the ind-category $\mathcal C(B_k)$.

The forgetful functor $\mathcal F_{B_k}^{\liea_k,B_k}$ sending $(\liea_k,B_k)$-modules to $B_k$-modules has a left adjoint
$$
\ind_{B_k}^{\liea_k,B_k}:\;\;\;M\;\mapsto\; U(\liea_k)\otimes_{U(\lieb_k)}M
$$
and a right adjoint
$$
\pro_{B_k}^{\liea_k,B_k}:\;\;\;M\;\mapsto\; \Hom_{\lieb_k}(U(\liea_k),M)_{B_k-\text{finite}}.
$$
Then $\ind_{B_k}^{\liea_k,B_k}$ sends projectives to projectives and $\pro_{B_k}^{\liea_k,B_k}$ sends injectives to injectives. As all $(\lieb_k,B_k)$-modules are injective and projective we see that $\mathcal C(\liea_k,B_k)$ has enough injectives and enough projectives. Therefore standard methods from homological algebra apply, and we may define cohomology as follows.

Pick a $K$-split reductive pair $(\lieg_k,K)^G$ over $k$ and a $k$-rational parabolic subalgebra $\lieq_k$ which is assumed to have a $k$-rational Levi decomposition
$$
\lieq_k\;=\;\liel_k+\lieu_k.
$$
We call $\lieq_k$ {\em germane (over $k$)} if $\liel_k$ is the Lie algebra of a reductive subgroup $L_k$ of $G_k$ defined as follows.

Consider the category $\mathcal C_{\rm fds}(\liel_k)$ of semi-simple finite-dimensional $\liel_k$-modules. This is a neutralized Tannakian category with Tannaka dual $\mathcal L_k$ say. Then $\mathcal L_k$ is proreductive and the forgetful functor

$$
\mathcal C_{\rm fd}(G_k)\;\to\;
\mathcal C_{\rm fds}(\liel_k)
$$
defines a morphism
$$
\ell:\mathcal L_k
\;\to\;
G_k.
$$
We then define $L_k$ to be the image of $\ell$. It is defined over $k$. If the Lie algebra of $L_k$ is $\liel_k$ we call $\lieq_k$ germane.

In this case $(\lieq_k,L_k\cap K_k)$ a {\em $k$-rational parabolic subpair} of $(\lieg_k,K)^G$. We have a map
$$
p:\;\;\;(\lieq_k,L_k\cap K_k)\;\to\;(\liel_k,L_k\cap K_k)
$$
of pairs and pullback along $p$ gives a functor
$$
\mathcal C(\liel_k,L_k\cap K_k)\to \mathcal C(\lieq_k,L_k\cap K_k).
$$
It has a right adjoint
$$
H^0(\lieu_k;-):\;\;\;\mathcal C(\lieq_k,L_k\cap K_k)\to \mathcal C(\liel_k,L_k\cap K_k)
$$
given by taking $\lieu_k$-invariants. The higher right derived functors
$$
H^q(\lieu_k;-):=R^qH^0(\lieu_k;-):\;\;\;\mathcal C(\lieq_k,L_k\cap K_k)\to \mathcal C(\liel_k,L_k\cap K_k)
$$
are the {\em $k$-rational $\lieu_k$-cohomology} and may be computed via the usual standard complex. Dually we may define and explicitly compute $\lieu_k$-homology. These homology and cohomology theories satisfy the usual duality relations.

\begin{proposition}\label{prop:equicohomology}
For any $k$-rational $(\lieg_k,K_k)$-module $M_k$ the cohmology $H^q(\lieu_k,X_k)$ is $k$-rational and for any map $\tau:k\to l$ of fields we have a natural isomorphism
$$
H^q(\lieu_k,X_k)_l^\tau\;\to\;
H^q(\lieu_l^\tau,X_l^\tau)
$$
of $(\liel_l^\tau,L_l^\tau\cap K_l^\tau)$-modules. The same statement is true for $\lieu_k$-homology and the duality maps and Hochschild-Serre spectral sequences respect the rational structure.
\end{proposition}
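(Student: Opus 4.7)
The plan is to exhibit an explicit $k$-rational chain complex computing both cohomology and homology, and then exploit flatness of field extensions to obtain the comparison isomorphism. The key point is that every construction underlying $\lieu$-cohomology and the associated structural maps in Section 4 is defined at the level of complexes by explicit formulas involving only the Lie bracket, the module action, and multilinear algebra over the ground field.

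First I would observe that the Chevalley--Eilenberg complex
$$
C_{\lieq_k}^\bullet(X_k) \;:=\; \Hom_k(\bigwedge^\bullet \lieu_k, X_k) \;\cong\; \bigwedge^\bullet \lieu_k^* \otimes_k X_k,
$$
together with the differential recalled earlier in the paper, is a complex of $(\liel_k, L_k\cap K_k)$-modules defined entirely over $k$. Since the Koszul resolution of the trivial module $k$ over $U(\lieu_k)$ is projective over any field of characteristic zero, this complex computes the derived functors of the invariants functor $H^0(\lieu_k;-)$ in the category $\mathcal C(\lieq_k, L_k\cap K_k)$, using that this category has enough injectives and projectives via the $\ind$- and $\pro$-functors of the preceding subsection. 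Dually, the complex $C_\bullet(X_k) := \bigwedge^\bullet \lieu_k \otimes_k X_k$ computes $\lieu_k$-homology.

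Next, since $l/k$ is flat (being a field extension), base change $(-)\otimes_{k,\tau} l$ is an exact functor and commutes with tensor products and exterior powers. The tensor-product description of the Chevalley--Eilenberg complex makes compatibility with base change transparent: we have canonical $(\liel_l^\tau, L_l^\tau \cap K_l^\tau)$-isomorphisms
$$
\bigl(\bigwedge^q \lieu_k^* \otimes_k X_k\bigr) \otimes_{k,\tau} l \;\cong\; \bigwedge^q (\lieu_l^\tau)^* \otimes_l X_l^\tau,
$$
and the Koszul differential is preserved under this identification, because its explicit formula is built from the Lie bracket and the module action, both of which are defined over $k$. Since base change commutes with the formation of cohomology of complexes, we deduce the natural isomorphism
$$
H^q(\lieu_k; X_k)_l^\tau \;\xrightarrow{\;\sim\;}\; H^q(\lieu_l^\tau; X_l^\tau),
$$
and the same argument applies verbatim to $\lieu_k$-homology.

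For the duality maps of Propositions \ref{prop:easyduality} and \ref{prop:hardduality}, one notes that the easy-duality pairing and the Poincar\'e-duality map $\delta$ are both given by explicit multilinear formulas over $k$, and hence commute with base change by construction. For the Hochschild--Serre spectral sequence, the $k$-rational inclusion $\lien_k \subseteq \lieu_k$ induces a $k$-rational filtration on $C_{\lieq_k}^\bullet(X_k)$; since base change preserves both the filtration and exactness, the associated spectral sequences match page-by-page after base change, and the isomorphisms $\alpha_r^{p,q}$ and $\beta^{p,q}$ are rational. The only nontrivial point, though not a deep one, is verifying that the Chevalley--Eilenberg complex genuinely computes the $k$-rational derived functors $H^q(\lieu_k;-)$ in $\mathcal C(\lieq_k, L_k \cap K_k)$; this reduces to showing that the standard Koszul resolution consists of objects acyclic for $H^0(\lieu_k;-)$, which holds for exactly the same reason as over $\CC$ since the argument uses only linear algebra and the PBW theorem, both valid over any field of characteristic zero.
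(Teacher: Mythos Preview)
Your proposal is correct and follows essentially the same approach as the paper, which simply states that the claim is obvious from the $k$- resp.\ $l$-rational standard complexes computing cohomology and homology. You have written out in detail what the paper leaves implicit: that the Chevalley--Eilenberg complex is defined by $k$-rational formulas, that base change along a field extension is exact and hence commutes with taking cohomology of complexes, and that the duality pairings and Hochschild--Serre filtration are likewise given by explicit $k$-rational data.
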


\begin{proof}
This is obvious from the $k$-resp.\ $l$-rational standard complexes computing cohomology and homology.
\end{proof}

Along the same lines we may define rational $(\lieg_k,K_k)$-cohomology. It also commutes with base change.


\subsection{Equivariant cohomological induction}

To define $k$-rational cohomological induction, we adapt Zuckerman's original construction as in \cite[Chaper 6]{book_vogan1981}. We start with a $(\lieg_k,L_k\cap K_k)$-module $M$ and set
$$
\tilde{\Gamma}_0(M)\;:=\;
\;\{m\in M\mid\dim_k U(\liek_k)\cdot m\;<\;\infty\}.
$$
and
$$
\Gamma_0(M)\;:=\;
\;\{m\in \tilde{\Gamma}_0(M)\mid \text{the $\liek_k$-representation}\; U(\liek_k)\cdot m\;\text{lifts to}\;K_k^0\}.
$$
As in the analytic case the obstruction for a lift to exist is the (algebraic) fundamental group of $K_k^0$. In particular there is no rationality obstruction, as a representation $N$ of $\liek_k$ lifts to $K_k^0$ if and only it does so after base change to one (and hence any) extension of $k$. It is easy to see that $\Gamma_0(M)$ is a $(\lieg_k,K_k^0)$-module.

We define the space of {\em $K_k^0$-finite vectors} in $M$ as
$$
\Gamma_1(M)\;:=\;
$$
$$
\;\{m\in \Gamma_0(M)\mid \text{the actions of $L_k\cap K_k$ (on $M$) and $L_k\cap K_k^0\subseteq K_k^0$  agree on $m$}\}.
$$
This is a $(\lieg_k,(L_k\cap K_k)\cdot K_k^0)$-module.
Finally the space of {\em $K_k$-finite vectors} in $M$ is
$$
\Gamma(M)\;:=\;
\;\ind_{\lieg_k,(L_k\cap K_k)\cdot K_k^0}^{\lieg_k,K_k}(\Gamma_1(M)).
$$
However this is not a subspace of $M$ in general. But by Frobenius reciprocity it comes with a natural map $\Gamma(M)\to M$.

The functor $\Gamma$ is a right adjoint to the forgetful functor along
$$
i:\;\;\;(\lieg_k,L_k\cap K_k)\;\to\;(\lieg_k,K_k)
$$
and hence sends injectives to injectives. We obtain the higher Zuckerman functors as the right derived functors
$$
\Gamma^q\;:=\;R^q\Gamma:\;\;\;
\mathcal C(\lieg_k,L_k\cap K_k)\to \mathcal C(\lieg_k,K_k).
$$
As in the classical case we can show
\begin{proposition}\label{prop:find}
For each $q$ we have a commutative square
$$
\begin{CD}
\mathcal C(\lieg_k,L_k\cap K_k)@>\Gamma^q>>\mathcal C(\lieg_k,K_k)\\
@V\mathcal F_{\liek_k,L_k\cap K_k}^{\lieg_k,L_k\cap K_k}VV
@VV\mathcal F_{\liek_k,K_k}^{\lieg_k,K_k}V\\
\mathcal C(\liek_k,L_k\cap K_k)@>\Gamma^q>>\mathcal C(\liek_k,K_k)\\
\end{CD}
$$
\end{proposition}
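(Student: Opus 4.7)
The plan is to reduce the claim for arbitrary $q$ to the case $q=0$ by standard homological algebra. First I would verify the identity
$$
\mathcal F_{\liek_k,K_k}^{\lieg_k,K_k}\circ\Gamma \;=\; \Gamma\circ\mathcal F_{\liek_k,L_k\cap K_k}^{\lieg_k,L_k\cap K_k}
$$
at degree $q=0$ by direct inspection of the three-stage construction of $\Gamma$. The passage $M\rightsquigarrow\tilde\Gamma_0(M)$, the refinement to $\Gamma_1(M)$, and the final $K_k$-induction from $(L_k\cap K_k)\cdot K_k^0$ to $K_k$ all impose conditions on, respectively use, only the $\liek_k$- and $K_k$-structure on the underlying $k$-vector space; the $\lieg_k$-action is preserved passively throughout. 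Consequently the very same formulae yield the Zuckerman functor for $(\liek_k,L_k\cap K_k)$-modules after one forgets the $\lieg_k$-structure, which is the rational analogue of the classical observation in \cite[Chap.\ 6]{book_vogan1981}.

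To bootstrap to arbitrary $q$, I would observe that both vertical forgetful functors are exact, since kernels and cokernels in all four categories are formed on the underlying $k$-vector space, and that the upper one $\mathcal F_1:=\mathcal F_{\liek_k,L_k\cap K_k}^{\lieg_k,L_k\cap K_k}$ admits a left adjoint, namely the $(\liek_k,L_k\cap K_k)$- to $(\lieg_k,L_k\cap K_k)$-induction
$$
\ind(M)\;=\;U(\lieg_k)\otimes_{U(\liek_k)}M
$$
with the natural diagonal action of $L_k\cap K_k$. By the Poincar\'e-Birkhoff-Witt theorem $U(\lieg_k)$ is free as a right $U(\liek_k)$-module, so $\ind$ is exact; hence $\mathcal F_1$ sends injectives to injectives. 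For any $M\in\mathcal C(\lieg_k,L_k\cap K_k)$ and any injective resolution $M\to I^\bullet$, the complex $\mathcal F_1(I^\bullet)$ is then an injective resolution of $\mathcal F_1(M)$, and the chain
$$
\Gamma^q(\mathcal F_1(M))\;=\;H^q(\Gamma(\mathcal F_1(I^\bullet)))\;=\;H^q(\mathcal F_2(\Gamma(I^\bullet)))\;=\;\mathcal F_2(\Gamma^q(M))
$$
(middle equality from the $q=0$ identity, last equality from exactness of $\mathcal F_2$) gives the claim.

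The main obstacle will be executing the $q=0$ verification carefully in the rational, possibly non-$K$-split, not-necessarily-algebraically-closed setting. In particular one must trace through that local $\liek_k$-finiteness and the liftability of an $\liek_k$-representation to $K_k^0$ depend only on the underlying $(\liek_k,L_k\cap K_k)$-structure --- which in characteristic zero they do, as already implicit in the definition of $\Gamma_0$ --- and that the final $K_k$-induction commutes with forgetting $\lieg_k$ on the nose, via Frobenius reciprocity inside $\mathcal C(\liek_k,K_k)$. Once these checks are complete, the derived-functor half of the argument is routine, and in view of Proposition~\ref{prop:equicohomology} the construction is automatically compatible with further base change.
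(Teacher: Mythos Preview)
Your proposal is correct and follows essentially the same strategy as the paper: verify $q=0$ by inspection of the explicit construction of $\Gamma$, then pass to higher $q$ using that the forgetful functors are exact and admit exact left adjoints (induction along $U(\lieg_k)$ over $U(\liek_k)$), hence preserve injectives. The only cosmetic difference is that the paper phrases the second step in terms of degenerate Grothendieck spectral sequences and edge maps, whereas you compute directly with an injective resolution; since the spectral sequences collapse, the two formulations are equivalent.
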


\begin{proof}
  For $q=0$ the commutativity is obvious from the explicit construction of the functor $\Gamma$. For $q>0$ this follows from the standard argument that the forgetful functors have an exact left adjoint given by induction along the Lie algebras and hence carry injectives to injectives. Furthermore they are exact, which means that the Grothendieck spectral sequences for the two compositions both degenerate. Therefore edge morphisms yield the commutativity, this proves the claim.
\end{proof}

\begin{corollary}\label{cor:basechange}
The functors $\Gamma^q:\mathcal C(\lieg_k,L_k\cap K_k)\to\mathcal C(\lieg_k,K_k)$ commute with base change and for any map $\tau:k\to l$ of fields and any $(\lieg_k,L_k\cap K_k)$-module $M_k$ we have for any degree $q$ a natural isomorphism
$$
\left(\Gamma^q(M_k)\right)_{l}^\tau\;\to\;
\Gamma^q(M_l^\tau).
$$
\end{corollary}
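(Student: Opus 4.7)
The plan is to reduce, via Proposition~\ref{prop:find}, to the corresponding assertion for the compact pair $(\liek_k,L_k\cap K_k)\hookrightarrow(\liek_k,K_k)$, and then verify base-change compatibility by tracing through Zuckerman's construction stage by stage.

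I would first settle $q=0$ directly. Local $U(\liek_k)$-finiteness of a vector is preserved and reflected by flat base change, since $U(\liek_l^\tau)\cdot(m\otimes 1)=(U(\liek_k)\cdot m)\otimes_{k,\tau}l$, so $\tilde\Gamma_0$ commutes with $(-)_l^\tau$. The obstruction for a finite-dimensional $\liek_k$-representation to lift to $K_k^0$ is governed by the algebraic fundamental group of $K_k^0$, which is invariant under extension of the base field in characteristic zero, hence the same is true of $\Gamma_0$. The compatibility condition cutting out $\Gamma_1$ is linear, and the final step $\Gamma=\ind_{\lieg_k,(L_k\cap K_k)\cdot K_k^0}^{\lieg_k,K_k}\circ\Gamma_1$ is a tensor product over a universal enveloping algebra, which manifestly commutes with $(-)_l^\tau$. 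All four stages commute with base change, and the resulting natural isomorphism $\Gamma(M_k)_l^\tau\to\Gamma(M_l^\tau)$ is functorial in $M_k$.

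For $q>0$ I would use Proposition~\ref{prop:find} to reduce to the compact pair, since both vertical forgetful functors in that square commute with base change and the $\lieg_k$-action on $\Gamma^q(M_k)$ is determined by functoriality in $M_k$. On the compact pair $\Gamma^q$ can be computed by a Koszul-type complex whose terms are built out of $U(\liek_k)\otimes_k\bigwedge^\bullet(\liek_k/(\liel_k\cap\liek_k))\otimes_k M_k$, in close analogy with the standard complex from Proposition~\ref{prop:equicohomology}. Each term is a $k$-rational tensor product and therefore commutes with $(-)_l^\tau$; the same then holds for the cohomology, which delivers the desired natural isomorphism in every degree.

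The main obstacle will be justifying that this explicit complex genuinely computes $\Gamma^q$ in the rational setting, i.e.\ verifying $\Gamma$-acyclicity of its terms. Base change need not preserve injectives in $\mathcal C(\liek_k,L_k\cap K_k)$, so one cannot simply transport an injective resolution of $M_k$ to $M_l^\tau$; the acyclicity has to be proved directly. For the compact pair this reduces ultimately to complete reducibility of $\mathcal C_{\rm fd}(L_k\cap K_k)$ and the vanishing of higher $\Ext$ on that semisimple category, which is where the characteristic-zero and reductivity hypotheses are used. Once this is established, the two $\delta$-functors $M_k\mapsto(\Gamma^q M_k)_l^\tau$ and $M_k\mapsto\Gamma^q(M_l^\tau)$ agree in degree zero and are both effaced by the same explicit resolutions, whence they are naturally isomorphic.
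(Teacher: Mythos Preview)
Your approach is correct and follows essentially the same route as the paper: construct a natural comparison map, then use Proposition~\ref{prop:find} to reduce to the compact pair $(\liek_k,L_k\cap K_k)\to(\liek_k,K_k)$. The paper's proof simply stops there, declaring the compact case ``well known,'' whereas you supply the details by invoking the explicit Koszul-type (Duflo--Vergne) resolution and checking that its terms are $\Gamma$-acyclic and commute with base change. Your identification of the subtlety---that injectives need not base change to injectives, so one must work with an acyclic resolution rather than an injective one---is exactly the point that makes the compact case require an argument, and your proposed resolution via complete reducibility of $\mathcal C_{\rm fd}(L_k\cap K_k)$ is the right one.
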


\begin{proof}
By universality of base change we obtain a natural map
$$
\left(\Gamma^q(M_k)\right)_{l}^\tau\;\to\;
\Gamma^q(M_l^\tau),
$$
which is compatible with the commutative diagram in Proposition \ref{prop:find}. This reduces us to the case $\lieg_k=\liek_k$, i.e.\ the finite-dimensional case where the statement is well known.
\end{proof}

In particular the functors $\Gamma^q$ satisfy the usual properties, i.e.\ they vanish for $q > \dim_k\liek_k/\liek_k\cap \liel_k$, we have a Hochschild-Serre spectral sequence for $K_k$-types, the effect on infinitesimal characters is the same as in the classical setting, etc.

Let $Z$ be an $(\liel_k,L_k\cap K_k)$-module. Then we may consider it as a $(\lieq_k,L_k\cap K_k)$-module with trivial $\lieu_k$-action. From there we consider
$$
\mathcal R^q(Z)\;:=\;
\Gamma^q\pro_{\lieq_k,L_k\cap K_k}^{\lieg_k,L_k\cap K_k}(Z\otimes_k\bigwedge^{\dim_k\lieu_k}\lieu_k),
$$
where
$$
\pro_{\lieq_k,L_k\cap K_k}^{\lieg_k,L_k\cap K_k}(Z\otimes_k\bigwedge^{\dim_k\lieu_k}\lieu_k)\;:=\;
\Hom_{\lieq_k}(U(\lieg_k),Z\otimes_k\bigwedge^{\dim_k\lieu_k}\lieu_k
)_{L_k\cap K_k-\text{finite}}.
$$

The definition of the functor $\Gamma_0(M)$ may be extended to non-$K$-split reductive pairs via the maximal submodule of $M$ which maps into $\Gamma_0(M\otimes_k {k_K})$. However it is not so clear how to generalize the remaining functors to this setting.

\subsection{Rational models of Harish-Chandra modules}

We depart from a $k$-rational pair $(\liea_k,B_k)$. Let $\sigma:k\to\CC$ be an embedding, and $(\liea^\sigma,B^\sigma)$ be an associated pair. Then
\begin{proposition}\label{prop:classicaliso}
The category $\mathcal C(\liea_\CC^\sigma,B_\CC^\sigma)$ of $\CC$-rational modules is naturally equivalent to the category $\mathcal C(\liea^\sigma,B^\sigma)$ of classical $(\liea^\sigma,B^\sigma)$-modules. This equivalence induces an equivalence of the corresponding categories of finite-dimensional modules.
\end{proposition}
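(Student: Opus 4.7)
The plan is to bootstrap from the finite-dimensional equivalence, which is built into the very definition of an associated pair: by construction of $B^\sigma$, the inclusion $B^\sigma\hookrightarrow B^\sigma(\CC)$ induces an equivalence
$$
\mathcal C_{\rm fd}(B^\sigma_\CC)\;\to\;\mathcal C_{\rm fd}(B^\sigma),
$$
which is the classical correspondence between rational finite-dimensional representations of the complex reductive algebraic group $B^\sigma_\CC$ and continuous finite-dimensional representations of its maximal compact subgroup (Weyl's unitarian trick). Both sides are semisimple, with the same set of irreducibles serving as simple generators.

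First I would promote this to an equivalence at the level of the ind-categories $\mathcal C(B^\sigma_\CC)\simeq \mathcal C(B^\sigma)$. On the algebraic side, ind-objects are by definition filtered colimits of finite-dimensional rational representations, and on the compact side every object is locally $B^\sigma$-finite, hence equals the filtered union of its finite-dimensional $B^\sigma$-subrepresentations (by axiom $(H_2)$ together with Proposition \ref{prop:kdecomp}). Writing any object as such a filtered colimit and applying the finite-dimensional equivalence termwise yields mutually inverse functors at the level of these ind-categories; both functors are $\CC$-linear, exact, and fully faithful on the subcategories of finite-dimensional objects.

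Next I would transport the $\liea^\sigma$-module structure. An object of $\mathcal C(\liea^\sigma_\CC,B^\sigma_\CC)$ is by definition an $\liea^\sigma$-module object $M$ in $\mathcal C(B^\sigma_\CC)$, i.e.\ a morphism $\liea^\sigma\otimes_\CC M\to M$ in that category satisfying the module axioms. Under the ind-equivalence above this corresponds to a $\CC$-linear map $\liea^\sigma\otimes_\CC M\to M$ which is $B^\sigma$-equivariant (compatibility $(H_1)$ is exactly the assertion that the action morphism lives in $\mathcal C(B^\sigma_\CC)$, transported to $\mathcal C(B^\sigma)$), and $(H_2)$ is automatic by construction. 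The remaining axiom $(H_3)$ is the content of the fact that the derivative of the rational $B^\sigma_\CC$-action on each finite-dimensional piece is the action of $\Lie(B^\sigma_\CC)$, and this coincides on $\lieb^\sigma\subseteq\Lie(B^\sigma_\CC)$ with the restriction of the $\liea^\sigma$-action along the inclusion $\lieb^\sigma\hookrightarrow\liea^\sigma$ that is part of the pair datum. Going in the opposite direction uses the same argument: a classical $(\liea^\sigma,B^\sigma)$-module decomposes as a filtered union of finite-dimensional $B^\sigma$-submodules, each of which extends uniquely to a rational $B^\sigma_\CC$-representation, and the $\liea^\sigma$-action intertwines the lifts by $(H_1)+(H_3)$.

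Finally, morphisms correspond: a $\CC$-linear $\liea^\sigma$-equivariant map is $B^\sigma_\CC$-equivariant on each finite-dimensional rational subobject if and only if it is $B^\sigma$-equivariant there, so the functors constructed above are fully faithful. Essential surjectivity in both directions was established in the construction. The main (and essentially only) obstacle is the finite-dimensional equivalence $\mathcal C_{\rm fd}(B^\sigma_\CC)\simeq\mathcal C_{\rm fd}(B^\sigma)$, but this is precisely the defining property of the associated compact group $B^\sigma$ stipulated in the previous subsection, so no further work is required beyond organizing the ind-categorical bookkeeping and checking that the Lie algebra action is faithfully transported.
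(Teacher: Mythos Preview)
Your proposal is correct and follows exactly the approach the paper takes, only with the details spelled out: the paper's proof consists of two sentences, noting that the case $\liea_k=\lieb_k$ holds by construction (your first two paragraphs) and that the general case follows from this (your transport of the $\liea^\sigma$-module structure). You have simply unpacked what the paper leaves implicit.
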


\begin{proof}
By construction the categories are equivalent in the case $\liea_k=\lieb_k$, and the general case follows from this observation as well.
\end{proof}

Now by the classification of reductive algebraic groups we know that each reductive pair $(\lieg,K)$ has a $k$-rational model $(\lieg_k,K_k)$ over a {\em number field} $k\subseteq\CC$, i.e.\ $k/\QQ$ is finite (we may assume $K_k$ quasi-split upon replacing $k$ by a finite extension). However in general such a model is far from unique.

\begin{corollary}
Each cohomologically induced $(\lieg,K)$-module has a model over the field of definition if its inducing data.
\end{corollary}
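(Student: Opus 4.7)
The plan is to verify that every ingredient entering the definition
$$
\mathcal R^q(Z)\;=\;\Gamma^q\pro_{\lieq_k,L_k\cap K_k}^{\lieg_k,L_k\cap K_k}\!\bigl(Z\otimes_k\textstyle\bigwedge^{\dim_k\lieu_k}\lieu_k\bigr)
$$
is defined and functorial over the field of rationality $k$ of the datum $(\lieq_k,L_k\cap K_k,Z)$, and then to show that base change to $\CC$ recovers Zuckerman's classical cohomologically induced module. Since everything is built up from $k$-rational pieces, the output is automatically an object of $\mathcal C(\lieg_k,K_k)$, which is what \emph{having a model over $k$} means in view of Proposition \ref{prop:classicaliso}.

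First, I would check that the production functor $\pro_{\lieq_k,L_k\cap K_k}^{\lieg_k,L_k\cap K_k}$ and the auxiliary twist by the top exterior power $\bigwedge^{\dim_k\lieu_k}\lieu_k$ commute with base change along any $\tau:k\to l$. The exterior power is a finite-dimensional $k$-rational representation of $L_k\cap K_k$ (indeed, of $L_k$), and the trivial extension of $Z\otimes_k\bigwedge^{\dim_k\lieu_k}\lieu_k$ to a $(\lieq_k,L_k\cap K_k)$-module is compatible with base change essentially by the very definition of $\otimes_{k,\tau}l$. For $\pro$, the explicit description
$$
\pro_{\lieq_k,L_k\cap K_k}^{\lieg_k,L_k\cap K_k}(M)\;=\;\Hom_{\lieq_k}\bigl(U(\lieg_k),M\bigr)_{L_k\cap K_k\text{-finite}}
$$
together with flatness of $U(\lieg_k)$-base change and the fact that finite-dimensional $(L_k\cap K_k)$-representations are defined over $k$ (by the Tannakian argument of the previous sections) gives a natural isomorphism $\pro(M)_l^\tau\cong\pro(M_l^\tau)$ for any $M$ which is a countable union of finite-dimensional $(\lieq_k,L_k\cap K_k)$-submodules; that hypothesis is harmless because we only apply $\pro$ to such modules.

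Next, I would combine this with Corollary \ref{cor:basechange}, which says that $\Gamma^q$ commutes with base change. Chaining the two isomorphisms yields, for every embedding $\sigma:k\to\CC$, a natural isomorphism
$$
\mathcal R^q(Z)_\CC^\sigma\;\cong\;\Gamma^q\pro_{\lieq_\CC^\sigma,(L\cap K)_\CC^\sigma}^{\lieg_\CC^\sigma,(L\cap K)_\CC^\sigma}\!\bigl(Z_\CC^\sigma\otimes_\CC\textstyle\bigwedge^{\dim\lieu}\lieu_\CC^\sigma\bigr)
$$
in $\mathcal C(\lieg_\CC^\sigma,K_\CC^\sigma)$. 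Via the equivalence of Proposition \ref{prop:classicaliso} the right hand side is precisely Zuckerman's classical cohomologically induced module attached to the associated complex reductive pair. This establishes that $\mathcal R^q(Z)$, which lives by construction in $\mathcal C(\lieg_k,K_k)$, is a model over $k$ of the complex cohomologically induced module, proving the claim.

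The main obstacle I expect is verifying base change for $\pro$ cleanly in the non-$K$-split situation: one must take care that the notion of $(L_k\cap K_k)$-finiteness is preserved when passing to $l$, which requires the connectedness hypothesis implicit in the definition of $(\lieg_k,K)^G$-modules given earlier and the fact that finite-dimensional rational $L_k\cap K_k$-representations descend to $k$. Once this bookkeeping is settled, the remainder of the argument is a formal diagram chase using the two commuting squares established in Corollary \ref{cor:basechange} and in the present argument.
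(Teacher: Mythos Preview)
Your proposal is correct and follows exactly the route the paper intends: the corollary is stated without proof because $\mathcal R^q(Z)$ is constructed entirely from $k$-rational data, and Corollary~\ref{cor:basechange} together with Proposition~\ref{prop:classicaliso} identify its base change to $\CC$ with Zuckerman's classical module. Your worry about the non-$K$-split case is unnecessary here, since the paper only defines $\Gamma^q$ and $\mathcal R^q$ for $K$-split pairs and explicitly leaves the non-split extension open.
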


In particular if the parabolic subpair $(\lieq_k,L_k\cap K_k)$, its Levi decomposition $\liel_k+\lieu_k$, and the inducing module $Z$ are all defined over $k\subseteq\CC$, then so are the induced modules $\mathcal R^q(Z)$ for all $q$. In particular every the $(\lieg,K)$-module of any discrete series representation, or more generally of any unitary representation with non-trivial $(\lieg,K)$-cohomology and $\overline{\QQ}$-rational infinitesimal character has a model over a {\em number field}.

The classical character formulae, as for example in the latter case given in \cite{voganzuckerman1984}, are rational over the same field of definition as well if interpreted in our theory as follows.

The standard arguments proving (ii) and (iii) of Theorem \ref{thm:inheritance} carry over to our setting provided that $\lieq_k$ is {\em $\theta$-stable} in the algebraic sense: Write $\liep_k$ for the $K_k$-complement of $\liek_k$ in $\lieg_k$, then
\begin{equation}
\lieq_k\;=\;(\lieq_k\cap\liep_k)+(\lieq_k\cap\liek_k).
\label{eq:rationalthetastable}
\end{equation}
\begin{proposition}\label{prop:rationalfl}
Assume that $(\lieg_k,K)^G$ is a reductive pair and that $\lieq_k\subseteq\lieg_k$ is a $k$-germane parabolic subalgebra. Then the $k$-rational $\lieu_k$-cohomology preserves $Z(\lieg_k)$-finiteness and if $\lieq_k$ is $\theta$-stable in the sense of \eqref{eq:rationalthetastable} also admissbility, in particular it sends finite length modules to finite length modules in that case.
\end{proposition}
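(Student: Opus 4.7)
The plan is to reduce each assertion to its classical complex counterpart by extending scalars along an embedding $\sigma:k\hookrightarrow\CC$, exploiting Proposition \ref{prop:equicohomology} which says that rational $\lieu_k$-cohomology commutes with base change. Concretely, for any $k$-rational $(\lieg_k,K)^G$-module $X_k$ and any degree $q$, that proposition furnishes a natural $(\liel_\CC,L_\CC\cap K_\CC)$-isomorphism
$$
H^q(\lieu_k;X_k)\otimes_k\CC\;\cong\;H^q(\lieu_\CC;X_k\otimes_k\CC),
$$
and the right hand side corresponds via Proposition \ref{prop:classicaliso} to classical $\lieu^\sigma$-cohomology of the associated Harish-Chandra module $X^\sigma$.

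For $Z(\lieg_k)$-finiteness, the canonical identification $Z(\lieg_\CC)=Z(\lieg_k)\otimes_k\CC$ shows that $X_k$ is $Z(\lieg_k)$-finite if and only if $X_k\otimes_k\CC$ is $Z(\lieg_\CC)$-finite. Theorem \ref{thm:inheritance}(ii) over $\CC$ then yields $Z(\liel_\CC)$-finiteness of the right hand side of the displayed isomorphism, and this descends to $Z(\liel_k)$-finiteness of $H^q(\lieu_k;X_k)$ via the reverse identification $Z(\liel_\CC)=Z(\liel_k)\otimes_k\CC$.

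For admissibility under the rational $\theta$-stability hypothesis \eqref{eq:rationalthetastable}, I first observe that this condition base-changes to classical $\theta$-stability of $\lieq_\CC$, since the Cartan decomposition $\lieg_k=\liep_k+\liek_k$ is preserved under extension of scalars. Admissibility of $X_k$ translates, via the equivalence of finite-dimensional representation categories in Proposition \ref{prop:classicaliso} together with the $K$-isotypic decomposition of Proposition \ref{prop:kdecomp}, to classical admissibility of $X^\sigma$. Theorem \ref{thm:inheritance}(iii) then gives admissibility of $H^q(\lieu^\sigma;X^\sigma)$, which descends to admissibility of $H^q(\lieu_k;X_k)$ by the same equivalence. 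The finite length conclusion follows at once: if $X_k$ has finite length then it is admissible and $Z(\lieg_k)$-finite, so by the two parts already proved, $H^q(\lieu_k;X_k)$ inherits both properties. Applying Proposition \ref{prop:finitelength} to its base change shows that $H^q(\lieu^\sigma;X^\sigma)$ has finite length over $\CC$, and since every strictly ascending chain of $k$-rational submodules of $H^q(\lieu_k;X_k)$ base-changes to a strictly ascending chain of $\CC$-rational submodules, the $k$-rational length is bounded above by the classical one.

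The main obstacle is the bookkeeping required to make the admissibility descent precise. While $Z$-finiteness is controlled by ideal-theoretic data that is manifestly Galois-stable, admissibility involves finiteness of isotypic components indexed by irreducible representations of $K$; in the non-$K$-split case one must pass through the field of definition $k_K$ and then keep track of Galois orbits of $K_{k_K}$-types before descending to $k$. Once this bookkeeping is carefully set up, the remaining steps are formal consequences of the base-change isomorphism combined with the classical inheritance theorem.
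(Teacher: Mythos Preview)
Your proof is correct, but it takes a different route from the paper's. You base-change all the way to $\CC$ along an embedding $\sigma$, invoke Theorem \ref{thm:inheritance} as a black box, and then descend the finiteness properties back to $k$; the descent for $Z$-finiteness works because the annihilator of $M_k\otimes_k\CC$ in $Z(\liel_\CC)$ equals $(\mathrm{Ann}_{Z(\liel_k)}M_k)\otimes_k\CC$, and the descent for admissibility works by the Galois-orbit bookkeeping you describe. The paper instead uses Proposition \ref{prop:equicohomology} only to enlarge $k$ to a finite extension over which $G_k$ becomes split, so that the Harish-Chandra map is already defined over $k$, and then reruns the Casselman--Osborne and Hochschild--Serre/Kostant arguments verbatim over that field, never leaving the rational setting.

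Your approach is quicker and treats the classical theorem as an input; the paper's approach is more intrinsic and demonstrates that the proofs themselves, not just the statements, are rational. The trade-off is that you must handle the $K$-type descent carefully (as you acknowledge), whereas the paper avoids this entirely by staying over a field where the relevant structure theory is already available.
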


\begin{proof}
Without loss of generality we may enlarge $k$ by Proposition \ref{prop:equicohomology} and in particular we may assume $G_k$ to be split. Then the Harish-Chandra map is defined over $k$. The rest of the argument goes as in the classical case, cf.\ \cite[Theorems 7.56 and Corollary 5.140]{book_knappvogan1995}.
\end{proof}

Proposition \ref{prop:rationalfl} allows us to define $k$-rational algebraic characters for finite length modules using the same formalism mutatis as before:
$$
c_{\lieq_k}:\;K_{\rm fl}(\lieg_k,K_k)\;\to\;
C_{\lieq_k,\rm fl}(\liel_k,L_k\cap K_k):=K_{\rm fl}(\liel_k,L_k\cap K_k)[W_{\lieq_k}^{-1}],
$$
$$
M_k\;\mapsto\;
\frac{\sum_q(-1)^q[H^q(\lieu_k;M_k)]}
{\sum_q(-1)^q[H^q(\lieu_k;{\bf1}_k)]}.
$$
Then $c_{\lieq_k}$ is multiplicative, respects duals, commutes with base change, and is therefore compatible with our theory from section 5 over $\CC$.


$$
\underline{\;\;\;\;\;\;\;\;\;\;\;\;\;\;\;\;\;\;\;\;\;\;\;\;\;\;\;\;\;\;}
$$\ \\
Karlsruher Institut f\"ur Technologie, Fakult\"at f\"ur Mathematik, Institut f\"ur Algebra und Geometrie, Kaiserstra\ss{}e 89-93, 76133 Karlsruhe, Germany\\
{januszewski@kit.edu}

\end{document}